\documentclass[11pt]{article}

\usepackage[letterpaper,margin=1in]{geometry}

\usepackage[utf8]{inputenc}
\usepackage[USenglish]{babel}
\usepackage[T1]{fontenc}

\bibliographystyle{abbrv}

\usepackage{lmodern}
\usepackage{amsmath}
\usepackage{bm}
\usepackage{amssymb}
\usepackage{mathtools}
\usepackage{amsthm}
\usepackage{latexsym}
\usepackage{fixmath}
\usepackage{upgreek}

\usepackage{cite}

\usepackage{titlesec}
\setcounter{secnumdepth}{4}

\usepackage{hyperref}
\usepackage[capitalise,nameinlink]{cleveref}

\usepackage{graphicx}
\usepackage{float}
\usepackage{subcaption}
\numberwithin{equation}{section}

\newtheorem{theorem}{Theorem}[section]

\newtheorem{lemma}{Lemma}[section]
\newtheorem{remark}{Remark}[section]

% Keywords command
\providecommand{\keywords}[1]
{
%	\small	
	\textbf{\textit{Keywords.}} {\small #1}
}

\allowdisplaybreaks

\usepackage[dvipsnames]{xcolor}

\usepackage{array}

\hypersetup{
  colorlinks=true,
  citecolor=blue,
	linkcolor=blue,
	filecolor=magenta,      
	urlcolor=cyan,
}

\def\div{\textnormal{div}}
\def\bV{\mathbf{V}}
\def\bv{\mathbf{v}}
\def\bu{\mathbf{u}}
\def\bn{\mathbf{n}}
\def\bq{\mathbf{q}}
\def\br{\mathbf{r}}
\def\bx{\mathbf{x}}
\def\bQ{\boldsymbol{Q}}
\def\bPi{\boldsymbol{\Pi}}
\def\bsi{\boldsymbol{\sigma}}
\def\btau{\boldsymbol{\tau}}
\def\bxi{\boldsymbol{\xi}}
\def\cK{\mathcal{K}}
\def\<{\langle}
\def\>{\rangle}

\def\esi{e_{\bsi}}
\def\etu{e_{\tilde\bu}}
\def\eu{e_{\bu}}
\def\eq{e_{\bq}}

% Add a period to the end of an abbreviation unless there's one
% already, then \xspace.
%\makeatletter
%\DeclareRobustCommand\onedot{\futurelet\@let@token\@onedot}
%\def\@onedot{\ifx\@let@token.\else.\null\fi\xspace}

%\def\eg{\emph{e.g}\onedot} \def\Eg{\emph{E.g}\onedot}
%\def\ie{\emph{i.e}\onedot} \def\Ie{\emph{I.e}\onedot}
%\def\cf{\emph{c.f}\onedot} \def\Cf{\emph{C.f}\onedot}
%\def\etc{\emph{etc}\onedot} \def\vs{\emph{vs}\onedot}
%\def\wrt{w.r.t\onedot} \def\dof{d.o.f\onedot}
%\def\etal{\emph{et al}\onedot}
%\makeatother
	
\newcommand{\e}[1]{\cdot 10^{#1}}

\title{Mixed finite element projection methods for the unsteady Stokes equations}

\author{Costanza Aric\`o\thanks{Corresponding author; Department of Engineering, University of Palermo, viale delle Scienze, Palermo, 90128, Italy; email: {\tt costanza.arico@unipa.it}}
	\and
Rainer Helmig\thanks{Institute for Modelling Hydraulic and Environmental Systems (IWS), Department of Hydromechanics and Modelling of Hydrosystems, University of Stuttgart, Pfaffenwaldring 61, Stuttgart, D-70569, Germany; email: {\tt rainer.helmig@iws.uni-stuttgart.de}}
	\and
	Ivan Yotov\thanks{Department of Mathematics,
		University of Pittsburgh, 301 Thackeray Hall, Pittsburgh, PA 15260, USA;
		email: {\tt yotov@math.pitt.edu}.}}
\begin{document}

\maketitle

\begin{abstract}
We develop $H$(div)-conforming mixed finite element methods for the unsteady Stokes equations modeling single-phase incompressible fluid flow. A projection method in the framework of the incremental pressure correction methodology is applied, where a predictor and a corrector problems are sequentially solved, accounting for the viscous effects and incompressibility, respectively. The predictor problem is based on a stress-velocity mixed formulation, while the corrector projection problem uses a velocity-pressure mixed formulation. The scheme results in pointwise divergence-free velocity computed at the end of each time step. We establish unconditional stability and first order in time accuracy. In the implementation we focus on generally unstructured triangular grids. We employ a second order multipoint flux mixed finite element method based on the next-to-the-lowest order Raviart-Thomas space $RT_1$ and a suitable quadrature rule. In the predictor problem this approach allows for a local stress elimination, resulting in element-based systems for each velocity component with three degrees of freedom per element. Similarly, in the corrector problem, the velocity is locally eliminated and an element-based system for the pressure is solved. At the end of each time step we obtain a second order accurate $H$(div)-conforming piecewise linear velocity, which is pointwise divergence free. We present a series of numerical tests to illustrate the performance of the method.
\end{abstract}

\keywords{Stokes equations, projection methods, mixed finite elements, multipoint flux approximation}

\section{Introduction}

One of the main issues in the the numerical solution of the incompressible Stokes equations is that a saddle point problem needs to be solved, where the pressure plays the role of a Lagrange multiplier for imposing the divergence-free constraint. For the solution of unsteady incompressible flow one of the most successful approaches is provided by the ``projection methods'', which started in the late 1960s with the two independent works of Chorin \cite{Chorin} and Temam \cite{Temam}. Instead of solving a coupled saddle point system for the velocity and pressure unknowns at each time step, an intermediate velocity field is computed first, using the momentum equation and neglecting the incompressibility constraint. This intermediate velocity is projected to the space of (weakly) divergence-free velocity fields in the second step, to get the next update for the velocity and pressure. Detailed discussions of the projection methods can be found, for example, in the review papers \cite{BROWN2001464, GUERMOND2006}. 

The classical projection methods are based on the velocity-pressure formulation of the governing equations, which has long been the mainstream of incompressible flow computations. The associated finite element methods use inf-sup stable finite element pairs, such as the Taylor-Hood, the Crouzeix-Raviart, or the MINI elements \cite{BBF}. The projection step involves the solution of a Poisson problem for the pressure with continuous finite elements. As a result, the computed velocity is not pointwise divergence-free. In this paper we develop $H$(div)-conforming mixed finite element projection methods for the unsteady Stokes equations with pointwise divergence-free velocity. The predictor problem is based on a stress-velocity mixed formulation, while the corrector projection problem uses a velocity-pressure mixed formulation. We consider the Raviart-Thomas $RT_k$, $k \ge 0$ or Brezzi–Douglas–Marini $BDM_k$, $k \ge 1$ families of spaces for the viscous stress components in the predictor problem and the velocity in the projection problem. In either case, the computed velocity at the end of each step is $H$(div)-conforming and divergence-free piecewise polynomial of degree $\le k$. Our family of methods is motivated by the projection method developed in \cite{Arico-NS-RT0} for the Navier-Stokes equations, see also \cite{Arico-thermal,Arico-ODA}, which uses a finite volume method for the predicted velocity and $RT_0$-based two point flux approximation (TPFA) method for the corrected velocity. We note that there has been extensive work on developing numerical methods for Stokes flow with divergence-free velocity and/or pressure-robustness (independence of the velocity error of the viscosity and pressure), see e.g., \cite{Cockburn2004ALC, HU2024115449, Guzmn2013ConformingAD, Wang-Ye, LI2022114815,Volker-div-free}. These approaches are based on the velocity-pressure formulation and require solving a saddle point problem. Our projection method provides an efficient alternate to obtain a divergence-free velocity using a stable pair of mixed finite element spaces for the Poisson problem in the projection step. Furthermore, our proposed implementation through the use of multipoint flux mixed finite element methods requires solving only symmetric and positive definite systems.

The stress-velocity mixed finite element method for the Stokes and Navier-Stokes equations has received increased attention in recent years, see, e.g. \cite{Cai-MFE-NS,gos2011,gmor2014,Caucao2017AFF}. Advantages of this approach include unified framework for Newtonian and non-Newtonian flows, local momentum balance, and direct $H$(div)-conforming computation of the stress, the latter being paramount for modeling fluid flow around an obstacle. A disadvantage of this formulation is that it results in increased number of degrees of freedom. This issue is addressed in \cite{Caucao2020AMS}, where a multipoint stress mixed finite element method for Stokes flow is developed, based on the stress–velocity–vorticity formulation. There, a suitable choice of mixed finite element spaces and quadrature rules allows for a local elimination of the stress and vorticity, resulting in a positive definite system for the velocity. In all of the aforementioned works, the pressure is eliminated using the incompressibility constraint. As a result, mass conservation is only weakly imposed. In addition, the formulation involves the deviatoric part of the stress and the velocity approximation is discontinuous. In contrast, in our projection method, the pressure gradient in the momentum balance equation solved in the predictor problem is taken from the previous time step and does not need to be eliminated, hence the method involves the full stress. Moreover, while the predicted velocity is discontinuous, the corrected velocity computed in the projection step is $H$(div)-conforming and exactly divergence-free.

In the first part of the paper we develop the family of mixed finite element projection methods for the unsteady Stokes equations of general polynomial degree. We perform stability analysis for the fully discrete method, establishing unconditional stability. The analysis is complicated by the fact that the predicted and the corrected velocity live in different finite element spaces, the former being discontinuous, and the latter $H$(div)-conforming. Critical components of the analysis are the inclusion in the scheme of a variable approximating the pressure gradient, which lives in the space of the predicted velocity, and the fact that divergence-free vectors in either $BDM_K$ or $RT_k$ are piecewise polynomials of degree $\le k$. We then proceed with the time-discretization error analysis, focusing on the semi-discrete continuous-in-space scheme. We establish first order convergence in time for the viscous stress and the corrected velocity.

In the second part of the paper we develop a specific method from the family introduced in the first part, based on the next-to-the-lowest order Raviart-Thomas velocity space $RT_1$ and discontinuous piecewise linear polynomials on generally unstructured triangular grids. This results in a method that is second order accurate in space. In order to avoid solving saddle point problems, we employ the methodology of multipoint flux mixed finite element (MFMFE) methods developed in \cite{W-Y} for the Poisson problem. The approach is based on choosing a suitable combination of a finite element space for the vector flux variable and a quadrature rule for the associated bilinear form, which leads to a block-diagonal mass matrix with small local blocks. The flux can therefore be locally eliminated, resulting in a symmetric and positive definite algebraic system for the scalar potential variable. In \cite{W-Y}, the lowest order Brezzi–Douglas–Marini \(BDM_1\) space on simplicial or quadrilateral grids is utilized, together with a vertex quadrature rule. The blocks in the flux mass matrix are associated with mesh vertices and the resulting system for the potential variable, after local flux elimination, is of finite volume type with one constant value per element. The method is first order accurate in the $L^2$-norm and it is related to the multipoint flux approximation (MPFA) method \cite{Aavatsmark2002AnIT,Edwards1998FiniteVD}. A similar mass-lumping approach is studied in \cite{Brezzi.F;Fortin.M;Marini.L2006} and an alternative formulation based on a broken $RT_0$ velocity space is developed in \cite{Klausen-Winther-2006a}. The MFMFE methodology is extended to 3D hexahedral grids in \cite{iwy2010,wxy2012}, to higher order spaces in \cite{high-order-mfmfe,Radu}, and to the Stokes equations using a vorticity–velocity–pressure formulation in \cite{BOON2023108498}.
 
In our method we utilize the second order MFMFE methodology on triangular grids developed in \cite{Radu, tuprints21948}, which is based on the $RT_1$ flux space and a quadrature rule that samples the functions at the degrees of freedom, which are the normal components at each edge evaluated at the two edge endpoints and the vector evaluated at the element center of mass. We employ this approach in both the predictor and the projection problems. In the predictor problem, we split the momentum equation into two separate equations for the $x$- and $y$-components of the predicted velocity. In each sub-problem, we locally eliminate the viscous stress component and obtain a sparse symmetric and positive definite system for the predicted velocity component. Similarly, in the projection problem, we locally eliminate the corrected velocity and we solve a sparse symmetric and positive definite system for the pressure. All three systems involve three unknowns per element describing the linear variation within the element. The global stencil couples each element with all elements that share a vertex with it.
The viscous stress and corrected velocity are easily computed by local postprocessing. 
The resulting method is highly computationally efficient, involving the solution of three sparse symmetric and positive definite systems per time step, which is done with the preconditioned conjugate gradient method.  

We remark that in physical problems, including transport processes, fluid-structure interaction, free fluid-porous medium interface transfer, or turbulence modeling, the second order accuracy of the velocity and pressure would allow for a higher physical fidelity in the approximation. In this sense, the present paper could also be regarded as the seed of further extensions where more terms are included in the governing momentum equations, e.g., convective inertia or Brinkman term or coupling with transport or thermal effects, as in \cite{Arico-NS-RT0,Arico-thermal,Arico-ODA}.

The paper is organized as follows. In \cref{sect_ge} we present the governing equations and the numerical method. Stability and time discretization error analysis is performed in \cref{sec:analysis}. The second order multipoint flux mixed finite element method on triangular grids is developed in \cref{sec:mfmfe}. In \cref{sec:numer} we report the results of a series of numerical experiments that illustrate the performance of the method. These include verification of the second order convergence in space and first order convergence in time, two benchmark problems, where the results of the method are compared to numerical and analytical solutions provided in the literature, and a challenging application with a strongly irregular computational domain. We end with conclusions in \cref{sec:concl}.
 
\section{Governing equations and numerical method} \label{sect_ge}

We assume a Newtonian, single-phase, incompressible fluid with density \(\rho\). Let \(\Omega \subset \mathbf{R}^d \), $d=2,3$, be the computational domain. The unsteady Stokes equations are
\begin{subequations}  \label{eq:governing_Eqq}
\begin{align}
& \frac{\partial \mathbf{u}}{\partial t} - \nu \Delta \mathbf{u} + \nabla \Psi= 0 \quad \textnormal{in } \Omega \times (0,T], \label{eq:momentum} \\
& \nabla\cdot \mathbf{u} = 0 \ \ \mbox{in } \Omega \times (0,T], \label{eq:continuity}
\end{align}
\end{subequations}
where \(T\) is the final time, \(\mathbf{u}\) is the fluid velocity vector, \(\Psi = \frac{p}{\rho}\) is the kinematic pressure, \(\nu = \frac{\mu}{\rho}\) is the kinematic fluid viscosity and \(\mu\) the dynamic fluid viscosity. Let \(\Gamma\) be the boundary of \(\Omega\), with a unit outward normal vector \(\mathbf{n}\). Two types of boundary conditions are assigned over  \(\Gamma = \Gamma_d\cup\Gamma_n\), where \(\Gamma_d\) and \(\Gamma_n\) are the portions of \(\Gamma\) where we assign the velocity vector and the normal component of the stress, respectively. The problem is complemented with initial conditions for the velocity and pressure. The boundary and initial conditions are:
\begin{subequations} \label{eq:IBCs}
\begin{align}
& \mathbf{u}=\mathbf{u}_b \quad \textnormal{on } \ \Gamma_d, \ t \in [0,T], \\
  & (- \nu \nabla \mathbf{u} + \Psi \, \mathbf{I}) \, \mathbf{n} = \boldsymbol{\Sigma}_b \quad
  \textnormal{on } \ \Gamma_n, \ t \in [0,T], \label{eq:IBCs2} \\
& \mathbf{u}=\mathbf{u}_0 \quad \textrm{with} \quad \nabla \cdot \mathbf{u}_0=0, \quad \Psi = \Psi_0 \quad \textnormal{in } \ \Omega , \ t= 0,
\end{align}
\end{subequations}
where \(\mathbf{I}\) the identity matrix.

\subsection{Time discretization} \label{discretization}

We discretize the time interval $[0,T]$ by a partition with $N$ sub-intervals of length $\Delta t = T/ N$ with vertices $t_n$, $n = 0,\ldots,N$, $t_n = n \Delta t$. We denote by \(\varphi^n\) the value of variable \(\varphi\) computed at time $t_n$.
System \eqref{eq:governing_Eqq} is solved at each discrete time $t_{n+1}$ by applying an incremental pressure correction scheme \cite{GUERMOND2006}, where a predictor and a projection problem are solved sequentially, such that the time-discretization form of \eqref{eq:momentum} splits into
\begin{subequations} \label{FTS2}
\begin{align}
& \frac{\tilde{\mathbf{u}}^{n+1} - \mathbf{u}^n}{\Delta t} - \nu \Delta \tilde{\mathbf{u}}^{n+1} + \nabla \Psi^n  = 0,  \label{eq:Ps} \\
& \frac{\mathbf{u}^{n+1} - \tilde{\mathbf{u}}^{n+1}}{\Delta t} + \nabla \left(\Psi^{n+1} - \Psi^n \right) = 0, \label{eq:Cs}
\end{align}
\end{subequations}
where \eqref{eq:Ps} is the predictor problem and \eqref{eq:Cs} is the projection problem.

\subsection{Space discretization}\label{sec:space-discr}

The space discretization is based on a mixed variational formulation of the system \eqref{FTS2}. For this purpose we introduce the variables
$$
\boldsymbol{\sigma} = -\nu \, \nabla \mathbf{u}, \quad \mathbf{q} = \nabla \Psi.
$$
Here $\boldsymbol{\sigma}$ is the viscous pseudostress, a $d\times d$ tensor with rows $\sigma_i^T$, $\sigma_i = \nabla u_i$, $i = 1,\ldots, d$, where $u_i$ is the $i$-th component of the velocity $\bu$. With this notation the predictor problem \eqref{eq:Ps} can be written as
\begin{align}
  \frac{\tilde{\mathbf{u}}^{n+1} - \mathbf{u}^n}{\Delta t} + \nabla\cdot \bsi^{n+1}
  + \bq^{n} = 0.  \label{eq:Ps-1}   
\end{align}
In the variational formulation of \eqref{FTS2} we will use the space \(H(\textnormal{div},\Omega)\), with
\begin{equation}
H(\div,\Omega) = \left\{\mathbf{v}\in (L_2\left(\Omega\right))^d: \ \nabla \cdot \mathbf{v} \in L_2\left(\Omega\right) \right\},
\end{equation}
\noindent and define
\begin{equation}
\mathbf{V} = H(\div,\Omega), \quad \mathbf{V}_{0,\Gamma_s} = \left\{\mathbf{v} \in \mathbf{V} : \mathbf{v}\cdot \mathbf{n} = 0 \ \textnormal{on} \ \Gamma_s\right\},  \ s \in \{d,n\}, \quad W = L_2(\Omega).
\end{equation}  
For a domain \(D \subset \mathbf{R}^d\), the symbol \(\left(\cdot,\cdot\right)_D\) marks the \(L^2(D)\)-inner product and $\|\cdot\|_D$ denotes the $L^2(D)$-norm for scalar, vector, and tensor valued functions. We also use the standard Hilbert space notation $H^r(D)$ with a norm $\|\cdot\|_{r,D}$, where $r$ is a positive integer. We will omit the subscript \textit{D} if \(D = \Omega\). 
For $S \in \mathbf{R}^{d-1}$ we denote by $\< \cdot,\cdot \>_S$ the $L^2(S)$ inner product or duality pairing.

We discretize \(\Omega\) by a geometrically conforming partition $T_h$ (also called grid or mesh) made of $N_T$ non-overlapping triangles \(E\) called (computational) elements or cells. Two neighboring elements share a common side (or interface) \(e\). Let \(\mathfrak{S}_T\) be the number of edges.

Let \(\mathbf{V}_h \times W_h \subset \bV\times W\) be either the Raviart-Thomas (RT) or the Brezzi-Douglas-Marini (BDM) pairs of spaces \cite{BBF} on \(T_h\). These are pairs of inf-sup stable mixed finite element spaces with the property
\begin{equation}\label{div-Vh}
\nabla\cdot \bV_h = W_h.
\end{equation}
Let $(\bV_h)^d$ denote the tensor-valued space where each row is an element of $\bV_h$. Let $Q_h:L^2(\Omega) \to W_h$ and $\bQ_h:(L^2(\Omega))^d \to \bV_h$
be the $L^2$-orthogonal projection operator onto $W_h$, such that for any $w \in L^2(\Omega)$,
\begin{align}
(Q_h w - w, w_h) = 0 \ \ \forall w_h \in W_h. \label{Qh-defn}
\end{align}
Let $Q_h^\Gamma:L^2(\Gamma) \to \bV_h\cdot\bn$ and $\bQ_h^\Gamma:(L^2(\Gamma))^d \to (\bV_h)^d\,\bn$ be the $L^2$-orthogonal projection operators onto $\bV_h\cdot\bn$ and $(\bV_h)^d\,\bn$, respectively,
such that for any $\varphi \in L^2(\Gamma)$ and any $\bv \in (L^2(\Gamma))^d$,
\begin{align*}
&\<\varphi - Q_h^\Gamma\varphi,\bv_h\cdot\bn\>_{\Gamma} = 0 \ \ \forall \bv_h \in \bV_h, \\
&\<\bv - \bQ_h^\Gamma\bv,\btau_h \,\bn\> _{\Gamma} = 0 \ \ \forall \btau_h \in (\bV_h)^d.
\end{align*}
We will also use the mixed interpolant $\bPi_h:(H^1(\Omega))^d \to \bV_h$ \cite{BBF} satisfying for all $\bv \in (H^1(\Omega))^d$,
\begin{align}
&  (\nabla\cdot(\bPi_h \bv - \bv),w_h) = 0 \ \ \forall w_h \in W_h, \label{Pi-div}\\
& \<(\bPi_h \bv - \bv)\cdot\bn,\bv_h\cdot\bn\>_{\Gamma} = 0 \ \ \forall \bv_h \in \bV_h. \label{Pi-n}
\end{align}
We will also utilize the vector version of $Q_h$, $Q_h^d: (L^2(\Omega))^d \to (W_h)^d$ and the tensor version of $\bPi_h$, $\bPi_h^d:(H^1(\Omega))^{d\times d} \to (\bV_h)^d$.

\subsection{The mixed finite element projection method}\label{numerical_model}   

The numerical algorithm is as follows. 

\begin{itemize}

\item Initialization: Let $\bu_h^0 = \bPi_h \bu_0$, $\Psi_h^0 = Q_h \Psi_0$, $\bq_h^0 = Q_h^d \,\nabla \Psi_0$, $\Psi^0_{b} = \Psi^0|_{\Gamma}$.

\end{itemize}

For \(n = 0,\ldots, N-1\):

\begin{itemize}
  
\item Predictor problem (MFE discretization of \eqref{eq:Ps-1}):

Find \(\bsi_h^{n+1} \in (\mathbf{V}_h)^d  : \bsi_h^{n+1} \mathbf{n} = \bQ_h^\Gamma(\mathbf{\Sigma}_b^{n+1} - \Psi_b^n \mathbf{n})\) on \(\Gamma_n\) and \(\tilde{\mathbf{u}}_h^{n+1} \in (W_h)^d\), such that
\begin{subequations} \label{eq:Predproblem}
\begin{align}
  &\left( \nu^{-1} \bsi_h^{n+1}, \boldsymbol{\tau}_h \right)
  - \left( \tilde{\mathbf{u}}_h^{n+1}, \nabla \cdot \boldsymbol{\tau}_h \right) =
  - \langle \mathbf{u}_b^{n+1}, \boldsymbol{\tau}_h \, \mathbf{n} \rangle _{\Gamma_d} \quad
  \forall \boldsymbol{\tau}_h \in (\mathbf{V}_{h,0,\Gamma_n})^d, \label{eq:PP1} \\
  &\left( \frac{\tilde{\mathbf{u}}_h^{n+1} - \mathbf{u}_h^n}{\Delta t}, \boldsymbol{\xi_h} \right)
  + \left( \nabla \cdot \bsi_h^{n+1}, \boldsymbol{\xi_h}\right)
  + \left( \mathbf{q}_h^n, \boldsymbol{\xi_h}\right) = 0 \quad \forall \boldsymbol{\xi_h} \in (W_h)^d. 
\end{align}
\end{subequations}

\item Compute $\Psi_b^{n+1}$ on $\Gamma_n$:
$$
\Psi_b^{n+1} = \left(\mathbf{\Sigma}_b^{n+1} + \nu \nabla \tilde{\mathbf{u}}_h^{n+1} \mathbf{n} \right) \cdot \mathbf{n} \quad \textnormal{on} \quad \Gamma_n.
$$

\item Projection problem (MFE discretization of \eqref{eq:Cs}): 

Find \(\mathbf{u}_h^{n+1} \in \mathbf{V}_h : \mathbf{u}_h^{n+1} \cdot \mathbf{n} = Q_h^\Gamma\left(\mathbf{u}_b \cdot \mathbf{n}\right)\) on \(\Gamma_d\) and \(\Psi_h^{n+1} \in W_h\) such that
\begin{subequations} \label{eq:Projecproblem}
\begin{align}
  &  \hskip - .1cm \bigg( \frac{\mathbf{u}_h^{n+1}-\tilde{\mathbf{u}}_h^{n+1}}{\Delta t}, \mathbf{v}_h\bigg)
  - ( \Psi_h^{n+1} - \Psi_h^n,\nabla \cdot \mathbf{v}_h ) =
  - \langle \Psi_b^{n+1} - \Psi_b^n, \mathbf{v}_h \cdot \mathbf{n} \rangle_{\Gamma_n}
  \ \forall \mathbf{v}_h \in \mathbf{V}_{h,0,\Gamma_d}, \label{eq:Projecproblem_1} \\
  & \hskip -.1cm \left( \nabla \cdot \mathbf{u}_h^{n+1},w_h\right) = 0 \quad \forall w_h \in W_h.
  \label{eq:Projecproblem_2} 
\end{align}
\end{subequations}

\item Update the pressure gradient: Find \(\mathbf{q}_h^{n+1}\in (W_h)^d\) such that
\begin{gather} 
  \left( \mathbf{q}_h^{n+1}, \bxi_h \right) =  \left( \mathbf{q}_h^n, \bxi_h \right)
  - \left( \frac{\mathbf{u}_h^{n+1}-\tilde{\mathbf{u}}_h^{n+1} }{\Delta t} ,\bxi_h \right) \quad \forall \bxi_h \in (W_h)^d. \label{eq:up_q}
\end{gather}

\item Go to the next time step.
\end{itemize}

\begin{remark} \label{remark1}
  The predictor problem \eqref{eq:Predproblem} accounts for the viscous effects. It uses the pressure gradient $\bq^n$ computed at the previous time step. The intermediate velocity $\tilde{\mathbf{u}}_h^{n+1} \in (W_h)^d$ computed after the predictor problem is discontinuous. The problem also provides \(H(\div)\)-conforming viscous pseudostress $\bsi_h^{n+1} \in (\mathbf{V}_h)^d$.

The projection problem \eqref{eq:Projecproblem} computes the new pressure $\Psi_h^{n+1} \in W_h$ and corrects the velocity, resulting in \(H(\div)\)-conforming and pointwise divergence free $\bu_h^{n+1} \in \bV_h$. In particular,
due to \eqref{div-Vh}, \eqref{eq:Projecproblem_2} implies
\begin{equation}\label{div-free}
\nabla \cdot \mathbf{u}_h^{n+1} = 0.
\end{equation}
We note that for both the RT and BDM pairs, it holds that
\begin{equation}\label{uh-in-Wh}
\bu_h^{n+1} \in (W_h)^d.
\end{equation}
The above property is true for the BDM pairs, since $\bV_h$ and $W_h$ contain polynomials of the same degree. It also holds for the RT pairs, due to \eqref{div-free} and \cite[Corollary~2.3.1]{BBF}. 

The update on $\mathbf{q}_h^{n+1}$ \eqref{eq:up_q} provides an approximation of $\nabla \Psi(t_{n+1})$, since, together with \eqref{uh-in-Wh} and \eqref{eq:Projecproblem_1}, it implies
\begin{equation*} 
\mathbf{q}_h^{n+1} = \mathbf{q}_h^n - \frac{\mathbf{u}_h^{n+1}-\tilde{\mathbf{u}}_h^{n+1} }{\Delta t} \simeq \nabla \Psi(t_n) + \nabla \big(\Psi(t_{n+1})- \Psi(t_n)\big) = \nabla \Psi(t_{n+1}).
\end{equation*}

\end{remark}

\section{Stability and time discretization error analysis}\label{sec:analysis}

In this section we establish unconditional stability and first order accuracy in time for the projection method \eqref{eq:Predproblem}--\eqref{eq:up_q}. In the analysis we focus on the case of homogeneous Dirichlet boundary conditions $\bu = 0$ on $\Gamma$. We will use the notation
$$
\mathbf{V}_{0} = \left\{\mathbf{v} \in \mathbf{V} : \mathbf{v}\cdot \mathbf{n} = 0 \ \textnormal{on} \ \partial\Omega\right\}
$$
In this case, in order to have uniqueness of the pressure $\Psi$, its space is restricted to
$$
W_0 = \left\{w \in W: \int_\Omega w = 0\right\}.
$$
The corresponding mixed finite element spaces are denoted by $\bV_{h,0}$ and $W_{h,0}$. We note that for this choice of spaces, it holds that
\begin{equation}\label{div-Vh0}
\nabla\cdot \bV_{h,0} = W_{h,0}.
\end{equation}
With this choice of boundary conditions, the algorithm \eqref{eq:Predproblem}--\eqref{eq:up_q} takes the form

\begin{itemize}
  
\item Predictor problem:
Find $\bsi_h^{n+1} \in (\mathbf{V}_h)^d$
and \(\tilde{\mathbf{u}}_h^{n+1} \in (W_h)^d\), such that
\begin{subequations} \label{eq:Predproblem-0}
\begin{align}
  &\left( \nu^{-1} \bsi_h^{n+1}, \boldsymbol{\tau}_h \right)
  - \left( \tilde{\mathbf{u}}_h^{n+1}, \nabla \cdot \boldsymbol{\tau}_h \right) = 0
  \quad
  \forall \boldsymbol{\tau}_h \in (\mathbf{V}_{h})^d, \label{pred1} \\
  &\left( \frac{\tilde{\mathbf{u}}_h^{n+1} - \mathbf{u}_h^n}{\Delta t}, \boldsymbol{\xi_h} \right)
  + \left( \nabla \cdot \bsi_h^{n+1}, \boldsymbol{\xi_h}\right)
  + \left( \mathbf{q}_h^n, \boldsymbol{\xi_h}\right) = 0 \quad \forall \boldsymbol{\xi_h} \in (W_h)^d.
  \label{pred2}
\end{align}
\end{subequations}

\item Projection problem:
Find $\mathbf{u}_h^{n+1} \in \mathbf{V}_{h,0}$
  and \(\Psi_h^{n+1} \in W_{h,0}\) such that
\begin{subequations} \label{eq:Projecproblem-0}
\begin{align}
  &  \bigg( \frac{\mathbf{u}_h^{n+1}-\tilde{\mathbf{u}}_h^{n+1}}{\Delta t}, \mathbf{v}_h\bigg)
  - ( \Psi_h^{n+1} - \Psi_h^n,\nabla \cdot \mathbf{v}_h ) = 0
  \quad \forall \mathbf{v}_h \in \mathbf{V}_{h,0}, \label{eq:Projecproblem_1-0} \\
  & \left( \nabla \cdot \mathbf{u}_h^{n+1},w_h\right) = 0 \quad \forall w_h \in W_{h,0}.
  \label{eq:Projecproblem_2-0} 
\end{align}
\end{subequations}

\item Update the pressure gradient: Find \(\mathbf{q}_h^{n+1}\in (W_h)^d\) such that
\begin{gather} 
  \left( \mathbf{q}_h^{n+1}, \bxi_h \right) =  \left( \mathbf{q}_h^n, \bxi_h \right)
  - \left( \frac{\mathbf{u}_h^{n+1}-\tilde{\mathbf{u}}_h^{n+1} }{\Delta t} ,\bxi_h \right) \quad \forall \bxi_h \in (W_h)^d. \label{eq:up_q-0}
\end{gather}

\end{itemize}

\begin{remark}\label{rem:div-free}
  Due to \eqref{div-Vh0}, \eqref{eq:Projecproblem_2-0} implies that \eqref{div-free} still holds.
\end{remark}

\subsection{Stability analysis}
In the analysis, we will utilize the algebraic identity
\begin{equation}\label{a-bb}
a(a-b) = \frac12\left(a^2 - b^2 + (a-b)^2\right).
\end{equation}
Let $\bV^0 = \{\bv \in \bV: \nabla\cdot\bv = 0\}$ denote the divergence-free subspace of $\bV$, with a similar notation for $\bV_h^0$ and $\bV_{h,0}^0$. We have the following useful orthogonality property.

\begin{lemma}
For $\bq_h^{n+1}$ computed in \eqref{eq:up_q-0}, it holds that
\begin{equation}\label{q-orth}
(\bq_h^{n+1},\bv_h) = 0 \quad \forall \bv_h \in \bV_{h,0}^0.
\end{equation}
\end{lemma}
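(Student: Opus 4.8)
The plan is to reduce the claim to a one-step recursion in $n$ together with a base case at $n=0$. The crucial preliminary observation is that every $\bv_h \in \bV_{h,0}^0$ lies in $(W_h)^d$: this is precisely the structural fact recorded in \eqref{uh-in-Wh}, namely that divergence-free vectors in the RT or BDM space are componentwise piecewise polynomials of degree $\le k$ and hence belong to $(W_h)^d$. This inclusion is what makes such a $\bv_h$ an admissible test function both in the pressure-gradient update \eqref{eq:up_q-0} and in the identity defining the $L^2$-projection $Q_h^d$.

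First I would test \eqref{eq:up_q-0} with $\bxi_h = \bv_h \in \bV_{h,0}^0 \subset (W_h)^d$, obtaining $(\bq_h^{n+1},\bv_h) = (\bq_h^n,\bv_h) - \left(\frac{\bu_h^{n+1}-\tilde\bu_h^{n+1}}{\Delta t},\bv_h\right)$. Next I would test the projection equation \eqref{eq:Projecproblem_1-0} with the same $\bv_h$; since $\nabla\cdot\bv_h = 0$, the pressure term $(\Psi_h^{n+1}-\Psi_h^n,\nabla\cdot\bv_h)$ drops out, leaving $\left(\frac{\bu_h^{n+1}-\tilde\bu_h^{n+1}}{\Delta t},\bv_h\right) = 0$. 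Substituting this into the update identity yields the recursion $(\bq_h^{n+1},\bv_h) = (\bq_h^n,\bv_h)$, and iterating down to $n=0$ gives $(\bq_h^{n+1},\bv_h) = (\bq_h^0,\bv_h)$ for every $n$.

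It then remains to check the base case $(\bq_h^0,\bv_h) = 0$. Using the initialization $\bq_h^0 = Q_h^d\,\nabla\Psi_0$ together with $\bv_h \in (W_h)^d$, the projection property gives $(\bq_h^0,\bv_h) = (\nabla\Psi_0,\bv_h)$. Green's formula then produces $(\nabla\Psi_0,\bv_h) = -(\Psi_0,\nabla\cdot\bv_h) + \<\Psi_0,\bv_h\cdot\bn\>_{\partial\Omega}$, and both terms vanish because $\nabla\cdot\bv_h = 0$ and $\bv_h\cdot\bn = 0$ on $\partial\Omega$ for $\bv_h \in \bV_{h,0}^0$. This closes the induction and establishes \eqref{q-orth}.

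I expect the only non-routine point to be the inclusion $\bV_{h,0}^0 \subset (W_h)^d$; everything else is a short combination of the two variational equations and one integration by parts. Once that inclusion is invoked, the divergence-free test function simultaneously annihilates the pressure term in the projection problem and the divergence term after integration by parts, which is exactly the mechanism that propagates the orthogonality from $\bq_h^0$ to all later times.
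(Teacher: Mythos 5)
Your proof is correct and follows exactly the same route as the paper's: use the inclusion $\bV_{h,0}^0 \subset (W_h)^d$ from the argument for \eqref{uh-in-Wh} to test \eqref{eq:up_q-0} and \eqref{eq:Projecproblem_1-0} with $\bv_h$, deduce $(\bq_h^{n+1},\bv_h)=(\bq_h^n,\bv_h)=\cdots=(\bq_h^0,\bv_h)$, and then kill the base case via the $L^2$-projection property and integration by parts. The paper merely writes this chain more tersely in a single display.
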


\begin{proof}
  Recalling the argument for \eqref{uh-in-Wh}, we have that $\bV_h^0 \subset (W_h)^d$, so
   we can test \eqref{eq:up_q-0} with $\bxi_h = \bv_h \in \bV_{h,0}^0$, which, together with \eqref{eq:Projecproblem_1-0}, implies
\begin{align*}
  (\bq_h^{n+1},\bv_h) & = (\bq_h^{n},\bv_h) = \ldots = (\bq_h^{0},\bv_h) = (Q_h^d\, \nabla \Psi_0,\bv_h)\\
  & = (\nabla \Psi_0,\bv_h) = - (\Psi_0,\nabla\cdot\bv_h) + \<\Psi_0,\bv_h\cdot\bn\>_{\Gamma} = 0.
\end{align*}
\end{proof}

We next establish a stability bound for the projection method.

\begin{theorem}\label{thm:stab}
For the method \eqref{eq:Predproblem-0}--\eqref{eq:up_q-0}, there exists a constant $C$ independent of $\Delta t $ and $h$ such that
\begin{equation}\label{eq:stab}
  \Delta t \sum_{n=0}^{N-1} \nu^{-1}\|\bsi_h^{n+1}\|^2 + \frac12\|\bu_h^N\|^2 + \frac12\sum_{n=0}^{N-1} \|\bu_h^{n+1} - \tilde\bu_h^{n+1}\|^2 + \frac{\Delta t^2}{2}\|\bq_h^N\|^2
  \le \frac12\|\bu_h^0\|^2 + \frac{\Delta t^2}{2}\|\bq_h^0\|^2.
\end{equation}
\end{theorem}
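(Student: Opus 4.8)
The plan is to derive a discrete energy identity by testing each of the three sub-problems \eqref{eq:Predproblem-0}--\eqref{eq:up_q-0} with its natural energy test function and then summing over $n$. Throughout I would repeatedly use the identity \eqref{a-bb}, the divergence-free property \eqref{div-free} (valid here by \cref{rem:div-free}), the inclusion \eqref{uh-in-Wh}, and the orthogonality \eqref{q-orth}.

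First, for the predictor I would take $\btau_h = \bsi_h^{n+1}$ in \eqref{pred1}, giving $\nu^{-1}\|\bsi_h^{n+1}\|^2 = (\tilde\bu_h^{n+1},\nabla\cdot\bsi_h^{n+1})$, and $\bxi_h = \tilde\bu_h^{n+1}$ in \eqref{pred2}. Substituting the former into the latter, multiplying by $\Delta t$, and applying \eqref{a-bb} to $(\tilde\bu_h^{n+1}-\bu_h^n,\tilde\bu_h^{n+1})$ yields
\begin{equation*}
\tfrac12\|\tilde\bu_h^{n+1}\|^2 - \tfrac12\|\bu_h^n\|^2 + \tfrac12\|\tilde\bu_h^{n+1}-\bu_h^n\|^2 + \Delta t\,\nu^{-1}\|\bsi_h^{n+1}\|^2 + \Delta t(\bq_h^n,\tilde\bu_h^{n+1}) = 0 .
\end{equation*}
Next, for the projection I would take $\bv_h=\bu_h^{n+1}$ in \eqref{eq:Projecproblem_1-0}; since $\nabla\cdot\bu_h^{n+1}=0$ by \eqref{div-free}, the pressure term vanishes, and after multiplying by $\Delta t$ and using \eqref{a-bb} I obtain $\tfrac12\|\bu_h^{n+1}\|^2 - \tfrac12\|\tilde\bu_h^{n+1}\|^2 + \tfrac12\|\bu_h^{n+1}-\tilde\bu_h^{n+1}\|^2 = 0$. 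Adding the two relations cancels $\|\tilde\bu_h^{n+1}\|^2$.

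The crux is the coupling term $\Delta t(\bq_h^n,\tilde\bu_h^{n+1})$, which I expect to be the main obstacle. Testing \eqref{eq:up_q-0} against arbitrary $\bxi_h\in(W_h)^d$ and invoking \eqref{uh-in-Wh} shows that $\bu_h^{n+1}-\tilde\bu_h^{n+1}=\Delta t(\bq_h^n-\bq_h^{n+1})$ holds pointwise in $(W_h)^d$, so $\tilde\bu_h^{n+1}=\bu_h^{n+1}-\Delta t\,\bq_h^n+\Delta t\,\bq_h^{n+1}$. Because $\bu_h^{n+1}\in\bV_{h,0}^0$ by \eqref{div-free}, the orthogonality \eqref{q-orth} gives $(\bq_h^n,\bu_h^{n+1})=0$; applying the polarization form of \eqref{a-bb} to $(\bq_h^n,\bq_h^{n+1})$ then converts the coupling term into $\tfrac{\Delta t^2}{2}\|\bq_h^{n+1}\|^2-\tfrac{\Delta t^2}{2}\|\bq_h^n\|^2-\tfrac{\Delta t^2}{2}\|\bq_h^{n+1}-\bq_h^n\|^2$, where the last term equals $\tfrac12\|\bu_h^{n+1}-\tilde\bu_h^{n+1}\|^2$. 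It is essential to use the exact polarization here rather than Young's inequality, since the latter produces the pressure-gradient increment with the wrong sign.

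Substituting this into the summed predictor-plus-projection relation, the $\tfrac12\|\bu_h^{n+1}-\tilde\bu_h^{n+1}\|^2$ generated by the projection step is exactly cancelled by the coupling contribution, leaving a clean per-step identity. Summing over $n=0,\dots,N-1$ telescopes the $\tfrac12\|\bu_h^n\|^2$ and $\tfrac{\Delta t^2}{2}\|\bq_h^n\|^2$ terms, and discarding the remaining nonnegative velocity-splitting sum on the left yields \eqref{eq:stab}. I note that, in this argument, the splitting term that naturally survives the cancellation is $\tfrac12\sum_n\|\tilde\bu_h^{n+1}-\bu_h^n\|^2$ rather than $\tfrac12\sum_n\|\bu_h^{n+1}-\tilde\bu_h^{n+1}\|^2$; if the latter is intended in \eqref{eq:stab}, the handling of the coupling term should be re-examined so as to spare, rather than cancel, that contribution.
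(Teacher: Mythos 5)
Your argument follows the paper's proof almost step for step: the same test functions $(\bsi_h^{n+1},\tilde\bu_h^{n+1})$ in the predictor, $\bu_h^{n+1}$ in the projection, and the update equation tested in $(W_h)^d$, the same use of \eqref{a-bb}, \eqref{div-free}, \eqref{uh-in-Wh}, and \eqref{q-orth}, and your two per-step relations coincide with the paper's \eqref{eq:stab-1}--\eqref{eq:stab-2}. The divergence occurs only in the endgame. The paper keeps the coupling mismatch in the form $\Delta t(\bq_h^{n+1}-\bq_h^n,\tilde\bu_h^{n+1})$, rewrites it as $\Delta t(\bq_h^{n+1}-\bq_h^n,\tilde\bu_h^{n+1}-\bu_h^{n})$ using the orthogonality of $\bq_h^{n+1}-\bq_h^n$ against $\bu_h^{n}$ (not against $\bu_h^{n+1}$), and applies Young's inequality; this sacrifices precisely $\frac12\|\tilde\bu_h^{n+1}-\bu_h^{n}\|^2$ and $\frac{\Delta t^2}{2}\|\bq_h^{n+1}-\bq_h^n\|^2$ and leaves $\frac12\|\bu_h^{n+1}-\tilde\bu_h^{n+1}\|^2$ standing, which is exactly how \eqref{eq:stab} acquires that third term. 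Your exact polarization instead cancels $\frac12\|\bu_h^{n+1}-\tilde\bu_h^{n+1}\|^2$ and spares $\frac12\|\tilde\bu_h^{n+1}-\bu_h^{n}\|^2$, so what you actually prove is an energy \emph{identity} with $\frac12\sum_n\|\tilde\bu_h^{n+1}-\bu_h^{n}\|^2$ in place of $\frac12\sum_n\|\bu_h^{n+1}-\tilde\bu_h^{n+1}\|^2$ --- the discrepancy you yourself flag at the end. As written, this is a genuine (if small) gap: the statement you establish is not literally \eqref{eq:stab}.

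The gap closes in one line, and your own intermediate relations supply it. From $\bu_h^{n+1}-\tilde\bu_h^{n+1}=\Delta t(\bq_h^n-\bq_h^{n+1})$ and the orthogonality of $\bq_h^{n+1}-\bq_h^n$ against both $\bu_h^{n+1}$ and $\bu_h^{n}$ (both lie in $\bV_{h,0}^0$, the latter by the initialization and \eqref{Pi-div}), one gets $\|\bu_h^{n+1}-\tilde\bu_h^{n+1}\|^2=\Delta t(\bq_h^{n+1}-\bq_h^n,\tilde\bu_h^{n+1}-\bu_h^{n})\le\|\bu_h^{n+1}-\tilde\bu_h^{n+1}\|\,\|\tilde\bu_h^{n+1}-\bu_h^{n}\|$ by Cauchy--Schwarz, hence $\|\bu_h^{n+1}-\tilde\bu_h^{n+1}\|\le\|\tilde\bu_h^{n+1}-\bu_h^{n}\|$ for every $n$, and your identity then implies \eqref{eq:stab} (and is in fact sharper). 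Alternatively, simply replace the exact polarization by the paper's Young's-inequality step. Either way the theorem is recovered, but the extra inequality must be stated rather than left as a caveat.
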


\begin{proof}
We start by taking test functions $(\btau_h,\bxi_h) = (\bsi_h^{n+1},\tilde\bu_h^{n+1})$ in \eqref{eq:Predproblem-0}, combining the equations, using \eqref{a-bb}, and multiplying by $\Delta t$, to obtain
\begin{equation}\label{eq:stab-1}
  \Delta t \, \nu^{-1}\|\bsi_h^{n+1}\|^2 + \frac12\|\tilde\bu_h^{n+1}\|^2 - \frac12\|\bu_h^{n}\|^2
  + \frac12\|\tilde\bu_h^{n+1} - \bu_h^{n}\|^2 + \Delta t \, (\bq_h^n,\tilde\bu_h^{n+1}) = 0.
\end{equation}
Next, we take $\bv_h = \bu_h^{n+1}$ in \eqref{eq:Projecproblem_1-0} and use \eqref{a-bb} and \eqref{div-free} to obtain
\begin{equation}\label{eq:stab-2}
\frac12\|\bu_h^{n+1}\|^2 - \frac12\|\tilde\bu_h^{n+1}\|^2
  + \frac12\|\bu_h^{n+1} - \tilde\bu_h^{n+1}\|^2 = 0.
\end{equation}
Taking $\bxi_h = \bq_h^{n+1}$, using \eqref{a-bb} and multiplying by $\Delta t^2$ results in
\begin{equation}\label{eq:stab-3}
  \frac{\Delta t^2}{2}\left(\|\bq_h^{n+1}\|^2 - \|\bq_h^{n}\|^2
  + \|\bq_h^{n+1} - \bq_h^{n}\|^2\right) = - \Delta t(\mathbf{u}_h^{n+1}-\tilde{\mathbf{u}}_h^{n+1},\bq_h^{n+1}) = \Delta t(\tilde{\mathbf{u}}_h^{n+1},\bq_h^{n+1}),
\end{equation}
where we used \eqref{q-orth} in the last equality. Summing \eqref{eq:stab-1}--\eqref{eq:stab-3}, we obtain
\begin{align}
  & \Delta t \, \nu^{-1}\|\bsi_h^{n+1}\|^2 + \frac12\left(\|\bu_h^{n+1}\|^2 - \|\bu_h^{n}\|^2\right) + \frac12\left(\|\tilde\bu_h^{n+1} - \bu_h^{n}\|^2 + \|\bu_h^{n+1} - \tilde\bu_h^{n+1}\|^2\right) \nonumber \\
  & \quad + \frac{\Delta t^2}{2}\left(\|\bq_h^{n+1}\|^2 - \|\bq_h^{n}\|^2
  + \|\bq_h^{n+1} - \bq_h^{n}\|^2\right) = \Delta t\, (\bq_h^{n+1} - \bq_h^n,\tilde\bu_h^{n+1}).
  \label{eq:stab-4}
\end{align}
For the term on the right hand side above, using \eqref{q-orth} and Young's inequality $ab \le \frac{\epsilon}{2}a^2 + \frac{1}{2\epsilon}b^2$ for any $\epsilon > 0$, we write
\begin{equation}\label{eq:stab-5}
  \Delta t\, (\bq_h^{n+1} - \bq_h^n,\tilde\bu_h^{n+1}) = \Delta t\, (\bq_h^{n+1} - \bq_h^n,\tilde\bu_h^{n+1} - \bu_h^n) \le \frac{\Delta t^2}{2}\|\bq_h^{n+1} - \bq_h^{n}\|^2
  + \frac12\|\tilde\bu_h^{n+1} - \bu_h^n\|^2.
\end{equation}
Bound \eqref{eq:stab} follows by combining \eqref{eq:stab-4}--\eqref{eq:stab-5} and summing over $n$ from $0$ to $N-1$.
\end{proof}

\subsection{Time discretization error analysis}

In this section we establish a bound on the time discretization error in the projection method. For this purpose, we consider the semi-discrete continuous-in-space formulation of the method
\eqref{eq:Predproblem-0}--\eqref{eq:up_q-0} given below.

\begin{itemize}

\item Initialization: Let $\bu^0 = \bu_0$, $\Psi^0 = \Psi_0$, $\bq^0 = \nabla \Psi_0$.
  
For \(n = 0,\ldots, N-1\):

\item  Predictor problem: Find $\bsi^{n+1} \in (\bV)^d$  and \(\tilde{\mathbf{u}}^{n+1} \in (W)^d\), such that
\begin{subequations} \label{eq:varPredproblem}
\begin{align}
  &\left( \nu^{-1} \bsi^{n+1}, \boldsymbol{\tau} \right)
  - \left( \tilde{\mathbf{u}}^{n+1}, \nabla \cdot \boldsymbol{\tau} \right) = 0
  \quad
  \forall \boldsymbol{\tau} \in (\mathbf{V})^d, \label{var1}\\
  &\left( \frac{\tilde{\mathbf{u}}^{n+1} - \mathbf{u}^n}{\Delta t}, \boldsymbol{\xi} \right)
  + \left( \nabla \cdot \bsi^{n+1}, \boldsymbol{\xi}\right)
  + \left( \mathbf{q}^n, \boldsymbol{\xi}\right) = 0 \quad \forall \boldsymbol{\xi} \in (W)^d. \label{var2}
\end{align}
\end{subequations}

\item Projection problem: Find $\mathbf{u}^{n+1} \in \mathbf{V}_0$
    and \(\Psi^{n+1} \in W_0\) such that
\begin{subequations} \label{eq:varProjecproblem}
\begin{align}
  &  \bigg( \frac{\mathbf{u}^{n+1}-\tilde{\mathbf{u}}^{n+1}}{\Delta t}, \mathbf{v}\bigg)
  - ( \Psi^{n+1} - \Psi^n,\nabla \cdot \mathbf{v} ) = 0
  \quad \forall \mathbf{v} \in \mathbf{V}_{0}, \label{eq:varProjecproblem_1} \\
  & \left( \nabla \cdot \mathbf{u}^{n+1},w\right) = 0 \quad \forall w \in W_0.
  \label{eq:varProjecproblem_2} 
\end{align}
\end{subequations}

\item Update the pressure gradient: Find \(\mathbf{q}^{n+1}\in (W)^d\) such that
\begin{gather} 
  \left( \mathbf{q}^{n+1}, \bxi \right) =  \left( \mathbf{q}^n, \bxi \right)
  - \left( \frac{\mathbf{u}^{n+1}-\tilde{\mathbf{u}}^{n+1} }{\Delta t} ,\bxi \right) \quad \forall \bxi \in (W)^d. \label{eq:var-up_q}
\end{gather}

\end{itemize}

As noted in Remark~\ref{rem:div-free}, \eqref{eq:varProjecproblem_2} implies that
\begin{equation}\label{div-free-0}
\nabla \cdot \mathbf{u}^{n+1} = 0.
\end{equation}

We next note that the solution to the model problem \eqref{eq:governing_Eqq} with boundary condition $\bu = 0$ on $\Gamma$ satisfies, for \(n = 0,\ldots, N-1\),
\begin{align}
  &\left( \nu^{-1} \boldsymbol{\sigma}(t_{n+1}), \boldsymbol{\tau} \right)
  - \left( \mathbf{u}(t_{n+1}), \nabla \cdot \boldsymbol{\tau} \right) = 0
  \quad \forall \boldsymbol{\tau} \in (\mathbf{V})^d, \label{true-1}\\
  & \left( \frac{\mathbf{u}(t_{n+1}) - \mathbf{u}(t_n)}{\Delta t}, \boldsymbol{\xi} \right)
  + \left( \nabla \cdot \boldsymbol{\sigma}(t_{n+1}), \boldsymbol{\xi}\right)
  + \left( \nabla \Psi(t_{n+1}), \boldsymbol{\xi}\right) = (T_{n+1}(\bu),\bxi) \quad \forall \boldsymbol{\xi} \in (W)^d, \label{true-2}\\
  & \nabla\cdot \bu(t_{n+1}) = 0, \label{true-3}
\end{align}
where
$$
T_{n+1}(\bu) = \frac{\mathbf{u}(t_{n+1}) - \mathbf{u}(t_n)}{\Delta t} - \frac{\partial \bu}{\partial t}(t_{n+1}).
$$

\begin{theorem}\label{thm:time-err}
For the semi-discrete method \eqref{eq:varPredproblem}--\eqref{eq:var-up_q}, assuming that the true solution is sufficiently smooth in time, there exists a constant $C$ independent of $\Delta t$ such that
\begin{align}\label{err-bound}
  & \Delta t \sum_{n=0}^{N-1}\nu^{-1}\|\bsi(t_{n+1}) - \bsi^{n+1}\|^2 + \|\bu(t_{N}) - \bu^N\|^2
  + \Delta t^2\|\nabla \Psi(t_N) - \bq^N\|^2 \le C \Delta t^2.
\end{align}

\end{theorem}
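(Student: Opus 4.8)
The plan is to mirror the energy argument of \cref{thm:stab}, applied now to the error quantities $\esi^{n+1} = \bsi(t_{n+1}) - \bsi^{n+1}$, $\etu^{n+1} = \bu(t_{n+1}) - \tilde\bu^{n+1}$, $\eu^{n} = \bu(t_n) - \bu^n$, and $\eq^{n} = \nabla\Psi(t_n) - \bq^n$. The observation that makes this possible is that the true velocity plays the role of \emph{both} the true corrected velocity and the true intermediate velocity: since $\bu(t_{n+1})$ satisfies \eqref{true-1}, it matches the numerical relation \eqref{var1} with $\tilde\bu^{n+1}$ replaced by $\bu(t_{n+1})$. Subtracting \eqref{var1}--\eqref{var2} from \eqref{true-1}--\eqref{true-2} then yields the error analogues of the predictor equations, whose right-hand side consists of the truncation error $T_{n+1}(\bu)$ together with the pressure-lag consistency term $\nabla\Psi(t_{n+1}) - \nabla\Psi(t_n)$, the latter arising because the scheme uses the lagged gradient $\bq^n \approx \nabla\Psi(t_n)$ whereas \eqref{true-2} carries $\nabla\Psi(t_{n+1})$. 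Both quantities are $O(\Delta t)$ in $L^2$ for a sufficiently smooth solution, by Taylor expansion with integral remainder. Subtracting correspondingly in \eqref{eq:varProjecproblem_1} and in the update \eqref{eq:var-up_q} produces the error analogues of the projection and update equations.

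Two structural facts are then used, exactly as in the stability proof. First, since $\bu = 0$ on $\Gamma$ and $\bu(t_{n+1})$ is divergence free by \eqref{true-3}, while $\bu^{n+1}$ is divergence free by \eqref{div-free-0} and lies in $\bV_0$, the corrected-velocity error satisfies $\eu^{n+1} \in \bV_{0}^0$; in particular $\nabla\cdot\eu^{n+1} = 0$, which is what lets the pressure terms drop upon testing. Second, the orthogonality \eqref{q-orth} carries over verbatim to the continuous setting, and combined with $(\nabla\Psi(t_{n+1}),\bv)=0$ for $\bv\in\bV_0^0$ (integration by parts, using $\nabla\cdot\bv=0$ and $\bv\cdot\bn=0$) gives $(\eq^{n+1},\bv)=0$ for all $\bv\in\bV_0^0$. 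Since $\eu^n\in\bV_0^0$ for every $n$ (with $\eu^0=0$), this makes the coupling term $\Delta t(\eq^{n+1}-\eq^n,\etu^{n+1})$ collapse to $\Delta t(\eq^{n+1}-\eq^n,\etu^{n+1}-\eu^n)$, precisely as in \eqref{eq:stab-5}.

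With these in hand I would carry out the same three tests as in \cref{thm:stab}: take $(\btau,\bxi) = (\esi^{n+1},\etu^{n+1})$ in the error predictor equations, $\bv = \eu^{n+1}$ in the error projection equation (whose right-hand side vanishes because $\eu^{n+1}$ is divergence free, giving the analogue of \eqref{eq:stab-2}), and $\bxi = \eq^{n+1}$ in the error update scaled by $\Delta t^2$; apply the identity \eqref{a-bb} to each and sum. The result is the error counterpart of \eqref{eq:stab-4}, with the two consistency terms $\Delta t(T_{n+1}(\bu) - (\nabla\Psi(t_{n+1})-\nabla\Psi(t_n)),\etu^{n+1})$ and $\Delta t^2(\nabla\Psi(t_{n+1})-\nabla\Psi(t_n),\eq^{n+1})$ on the right. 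The coupling term is absorbed by orthogonality and Young's inequality exactly as in \eqref{eq:stab-5}, cancelling the $\frac{\Delta t^2}{2}\|\eq^{n+1}-\eq^n\|^2$ and $\frac12\|\etu^{n+1}-\eu^n\|^2$ contributions. Writing $\mathcal E^{n} = \frac12\|\eu^{n}\|^2 + \frac{\Delta t^2}{2}\|\eq^{n}\|^2$, this leaves a per-step inequality of the form $\mathcal E^{n+1} - \mathcal E^n + \Delta t\,\nu^{-1}\|\esi^{n+1}\|^2 + \frac14\|\eu^{n+1}-\etu^{n+1}\|^2 \le C\Delta t\,\mathcal E^{n+1} + F^{n+1}$, to which discrete Gronwall applies for $\Delta t$ small. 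Since the initialization gives $\eu^0 = 0$ and $\eq^0 = 0$, so that $\mathcal E^0 = 0$, and since $\sum_n F^{n+1} = O(\Delta t^2)$, the bound \eqref{err-bound} follows, the good terms being retained on the left.

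The main obstacle I anticipate is the treatment of the consistency term $\Delta t^2(\nabla\Psi(t_{n+1})-\nabla\Psi(t_n),\eq^{n+1})$ so that it stays compatible with Gronwall. A naive Young step produces $\epsilon\,\Delta t^2\|\eq^{n+1}\|^2$, which is \emph{not} $\Delta t$-small relative to the $\frac{\Delta t^2}{2}\|\eq^{n+1}\|^2$ sitting in $\mathcal E^{n+1}$, and Gronwall would fail. The resolution is to exploit $\|\nabla\Psi(t_{n+1})-\nabla\Psi(t_n)\| \le \Delta t^{1/2}\big(\int_{t_n}^{t_{n+1}}\|\partial_t\nabla\Psi\|^2\big)^{1/2} = O(\Delta t)$ and to weight Young asymmetrically as $\Delta t^2(\nabla\Psi(t_{n+1})-\nabla\Psi(t_n),\eq^{n+1}) \le \frac12\Delta t\|\nabla\Psi(t_{n+1})-\nabla\Psi(t_n)\|^2 + \frac12\Delta t^3\|\eq^{n+1}\|^2$, so that the first piece sums to $O(\Delta t^2)$ and the second equals $\Delta t\cdot\frac{\Delta t^2}{2}\|\eq^{n+1}\|^2$, hence Gronwall-admissible. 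The truncation term is handled by the same device: splitting $\etu^{n+1} = \eu^{n+1} - (\eu^{n+1}-\etu^{n+1})$, the part against $\eu^{n+1}$ yields $\frac{\Delta t}{2}\|\eu^{n+1}\|^2$ plus a $\Delta t\|T_{n+1}(\bu)\|^2$-type remainder summing to $O(\Delta t^2)$, while the part against $\eu^{n+1}-\etu^{n+1}$ is absorbed into the good term $\frac14\|\eu^{n+1}-\etu^{n+1}\|^2$.
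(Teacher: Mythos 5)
Your proposal is correct and follows essentially the same route as the paper's proof: the same error equations with the truncation term $T_{n+1}(\bu)$ and the pressure-lag term $S_{n+1}(\nabla\Psi)=\nabla\Psi(t_{n+1})-\nabla\Psi(t_n)$, the same three test choices, the orthogonality $(\eq^{n},\bv)=0$ for $\bv\in\bV_0^0$ to shift the coupling term, and the splitting $\etu^{n+1}=\eu^{n+1}-(\eu^{n+1}-\etu^{n+1})$ with asymmetric Young weights feeding a discrete Gronwall argument. Your anticipated resolution of the $\Delta t^2(S_{n+1}(\nabla\Psi),\eq^{n+1})$ term via $\Delta t\|S_{n+1}\|^2+\tfrac{\Delta t^3}{4}\|\eq^{n+1}\|^2$ is exactly the paper's bound for $I_4$.
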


\begin{proof}
Let $\esi^{n} = \bsi(t_{n}) - \bsi^{n}$, $\etu^n = \bu(t_{n}) - \tilde\bu^n$, $\eu^n = \bu(t_{n}) - \bu^n$, and $\eq^n = \nabla \Psi(t_n) - \bq^n$.
Subtracting \eqref{var1}--\eqref{var2}, \eqref{div-free-0} from \eqref{true-1}--\eqref{true-3} gives the error equations
\begin{align}
  &\left( \nu^{-1} \esi^{n+1}, \boldsymbol{\tau} \right)
  - \left( \etu^{n+1}, \nabla \cdot \boldsymbol{\tau} \right) = 0
  \quad \forall \boldsymbol{\tau} \in (\mathbf{V})^d, \label{err-1}\\
  & \left( \frac{\etu^{n+1} - \eu^n}{\Delta t}, \boldsymbol{\xi} \right)
  + \left( \nabla \cdot \esi^{n+1}, \boldsymbol{\xi}\right)
  + \left( \eq^{n}, \boldsymbol{\xi}\right) = (T_{n+1}(\bu),\bxi) - (S_{n+1}(\nabla\Psi),\bxi) \quad \forall \boldsymbol{\xi} \in (W)^d, \label{err-2}\\
  & \nabla\cdot\eu^{n+1} = 0, \label{err-3}
\end{align}
where
$$
S_{n+1}(\nabla\Psi) = \nabla\Psi(t_{n+1}) - \nabla\Psi(t_{n}).
$$
Taking $(\btau,\bxi) = (\esi^{n+1},\etu^{n+1})$ in \eqref{err-1}--\eqref{err-2}, combining the equations, using \eqref{a-bb}, and multiplying by $\Delta t$, we obtain
\begin{align}
  & \Delta t \, \nu^{-1}\|\esi^{n+1}\|^2 + \frac12\|\etu^{n+1}\|^2 - \frac12\|\eu^{n}\|^2
  + \frac12\|\etu^{n+1} - \eu^{n}\|^2 + \Delta t \, (\eq^n,\etu^{n+1}) \nonumber \\
& \qquad = \Delta t(T_{n+1}(\bu),\etu^{n+1}) - \Delta t(S_{n+1}(\nabla\Psi),\etu^{n+1}). \label{eq:err-1}
\end{align}
Next, we subtract and add $\bu(t_{n+1})$ in \eqref{eq:varProjecproblem_1}, take $\bv = \eu^{n+1}$, and
use \eqref{err-3}, to obtain
$$
(\eu^{n+1} - \etu^{n+1},\eu^{n+1}) = \Delta t (\Psi^{n+1} - \Psi^n,\nabla\cdot\eu^{n+1}) = 0,
$$
which implies, using \eqref{a-bb}
\begin{equation}\label{eq:err-2}
\frac12\|\eu^{n+1}\|^2 - \frac12\|\etu^{n+1}\|^2
  + \frac12\|\eu^{n+1} - \etu^{n+1}\|^2 = 0.
\end{equation}
Next, subtracting and adding $\nabla \Psi(t_{n+1})$, $\nabla \Psi(t_{n})$, and $\bu(t_{n+1})$
in \eqref{eq:var-up_q}, multiplying by $\Delta t^2$, and taking $\bxi = \eq^{n+1}$ results in
\begin{equation}\label{eq:err-3a}
  \Delta t^2(\eq^{n+1} - \eq^n,\eq^{n+1}) + \Delta t (\eu^{n+1} - \etu^{n+1},\eq^{n+1})
  = \Delta t^2(S_{n+1}(\nabla\Psi),\eq^{n+1}).
\end{equation}
We observe that the argument for \eqref{q-orth} implies
$$
(\bq^{n},\bv) = 0 \quad \forall \bv \in \bV_{0}^0,
$$
which, combined with
\begin{equation}\label{grad-orth}
(\nabla \Psi,\bv) = -(\Psi,\nabla\cdot\bv) + \<\Psi,\bv\cdot\bn\>_{\Gamma} = 0 \quad \forall \bv \in \bV_{0}^0,
\end{equation}
implies
\begin{equation}\label{q-orth-err}
(\eq^{n},\bv) = 0 \quad \forall \bv \in \bV_{0}^0.
\end{equation}
Therefore, using that $\eu^{n+1} \in \bV_{0}^0$, cf. \eqref{err-3}, \eqref{eq:err-3a} reduces to
\begin{equation*}
  \Delta t^2(\eq^{n+1} - \eq^n,\eq^{n+1}) - \Delta t (\etu^{n+1},\eq^{n+1})
  = \Delta t^2(S_{n+1}(\nabla\Psi),\eq^{n+1}).
\end{equation*}
which, combined with \eqref{a-bb}, implies
\begin{equation}\label{eq:err-3}
  \frac{\Delta t^2}{2}\left(\|\eq^{n+1}\|^2 - \|\eq^{n}\|^2 + \|\eq^{n+1} - \eq^n\|^2\right)
 - \Delta t (\etu^{n+1},\eq^{n+1}) = \Delta t^2(S_{n+1}(\nabla\Psi),\eq^{n+1}).
\end{equation}
The next step is to sum \eqref{eq:err-1}, \eqref{eq:err-2}, and \eqref{eq:err-3}. For the sum of the last terms on left hand sides of \eqref{eq:err-1} and \eqref{eq:err-3}, using \eqref{q-orth-err}, we write
\begin{equation}\label{eq:err-4}
-(\etu^{n+1},\eq^{n+1} - \eq^n) = (\eu^{n} - \etu^{n+1},\eq^{n+1} - \eq^n)
\end{equation}
Thus, summing \eqref{eq:err-1}, \eqref{eq:err-2}, and \eqref{eq:err-3} and using \eqref{eq:err-4} we arrive at
\begin{align}
& \Delta t \, \nu^{-1}\|\esi^{n+1}\|^2 + \frac12\left(\|\eu^{n+1}\|^2 - \|\eu^{n}\|^2
+ \|\etu^{n+1} - \eu^{n}\|^2 + \|\eu^{n+1} - \etu^{n+1}\|^2 \right) \nonumber \\
& \qquad + \frac{\Delta t^2}{2}\left(\|\eq^{n+1}\|^2 - \|\eq^{n}\|^2 + \|\eq^{n+1} - \eq^n\|^2\right)
\nonumber \\
&\quad = - \Delta t (\eu^{n} - \etu^{n+1},\eq^{n+1} - \eq^n)
+ \Delta t(T_{n+1}(\bu),\etu^{n+1})  - \Delta t(S_{n+1}(\nabla\Psi),\etu^{n+1}) \nonumber \\
& \qquad + \Delta t^2(S_{n+1}(\nabla\Psi),\eq^{n+1}) =: I_1 + I_2 + I_3 + I_4. \label{eq:err-5}
\end{align}
We proceed with bounding the four terms on the right hand side. For $I_1$, using Young's inequality, we have
\begin{equation}\label{I1}
|I_1| \le \frac12\|\eu^{n} - \etu^{n+1}\|^2 + \frac{\Delta t^2}{2}\|\eq^{n+1} - \eq^n\|^2.
\end{equation}
For $I_2$, using Young's inequality, we write
\begin{align}
  |I_2| & = |\Delta t(T_{n+1}(\bu),\etu^{n+1} - \eu^{n+1}) + \Delta t(T_{n+1}(\bu),\eu^{n+1})| \nonumber \\
  & \le \Delta t^2\|T_{n+1}(\bu)\|^2 + \frac14\|\etu^{n+1} - \eu^{n+1}\|^2
  + \Delta t\|T_{n+1}(\bu)\|^2 + \frac{\Delta t}{4}\|\eu^{n+1}\|^2.\label{I2}
\end{align}
For $I_3$, using \eqref{grad-orth} and \eqref{err-3}, we have that $(S_{n+1}(\nabla\Psi),\eu^{n+1}) = 0$. Then, using Young's inequality, we obtain
\begin{equation}\label{I3}
|I_3| = |\Delta t(S_{n+1}(\nabla\Psi),\etu^{n+1} - \eu^{n+1})| \le \Delta t^2 \|S_{n+1}(\nabla\Psi)\|^2 + \frac14 \|\etu^{n+1} - \eu^{n+1}\|^2.
\end{equation}
For $I_4$, the use of Young's inequality gives
\begin{equation}\label{I4}
|I_4| \le \Delta t \|S_{n+1}(\nabla\Psi)\|^2 + \frac{\Delta t^3}{4}\|\eq^{n+1}\|^2.
\end{equation}
Combining \eqref{eq:err-5}--\eqref{I4} gives
\begin{align}
  & \Delta t \, \nu^{-1}\|\esi^{n+1}\|^2 + \frac12\left(\|\eu^{n+1}\|^2 - \|\eu^{n}\|^2\right)
  + \frac{\Delta t^2}{2}\left(\|\eq^{n+1}\|^2 - \|\eq^{n}\|^2\right)
\nonumber \\
&\quad \le \left(\Delta t^2 + \Delta t\right)\|T_{n+1}(\bu)\|^2
+ \left(\Delta t^2 + \Delta t \right)\|S_{n+1}(\nabla\Psi)\|^2
+ \frac{\Delta t}{4}\|\eu^{n+1}\|^2 + \frac{\Delta t^3}{4}\|\eq^{n+1}\|^2. \label{eq:err-6}
\end{align}
It is straightforward to show for the time discretization and splitting errors that $\|T_{n+1}(\bu)\| \le C\Delta t$ and $\|S_{n+1}(\nabla\Psi)\| \le C \Delta t$, assuming that the solution is smooth enough in time. Then \eqref{err-bound} follows by summing \eqref{eq:err-6} over $n$ from 0 to $N-1$, using that $\eu^0 = 0$ and $\eq^0 = 0$, and, assuming that $\Delta t \le 1$, applying the discrete Gronwall inequality \cite[Lemma~1.4.2]{QV-book} for the last two terms in \eqref{eq:err-6}.
\end{proof}

\section{A second order multipoint flux mixed finite element method}\label{sec:mfmfe}

We next describe our specific implementation of the mixed finite element projection method introduced in Section~\ref{sect_ge}. It is based on the next-to-the lowest order Raviart-Thomas spaces $RT_1$ on triangular grids \cite{R-T}, which results in a method with second order accuracy in space. Moreover, we employ the multipoint flux mixed finite element methodology developed in \cite{W-Y} as a first order method and extended in \cite{Radu} to a second order method using the $RT_1$ spaces \cite{R-T}. Applying a quadrature rule with nodes associated with the degrees of freedom of $\bV_h$, we obtain mass lumping in both the predictor and projection problems. In the predictor problem, we locally eliminate the viscous stress and solve a symmetric and positive definite system for each component of the predicted velocity. In the projection problem, we locally eliminate the corrected velocity and we solve a symmetric and positive definite system for the pressure. The viscous stress and corrected velocity are then easily computed by local postprocessing. The resulting method is computationally very efficient. It provides a second order accurate $H(\mbox{div})$-conforming velocity, which is linear on each triangle and pointwise divergence free.   

\subsection{Mixed finite element spaces} \label{Mapping}
We consider a partition $T_h$ of the computational domain into non-overlapping triangles, where $h$ is the maximal element diameter. Let \(\hat{E}\) be the reference triangle with vertices $\hat \br_1 = (0,0)^T$, $\hat \br_2 = (1,0)^T$, and $\hat \br_3 = (0,1)^T$. For any triangle \(E \in T_h\) with counterclockwise oriented vertices \(\br_i\), \(i = 1,2,3\), there exists a bijection linear mapping \(F_E : \hat{E} \rightarrow E\) such that $F_E(\hat\br_i) = \br_i$, $i=1,2,3$, given by
\begin{equation} \label{eq:mapping}
F_E(\hat\bx) = \mathbf{r}_1\left(1 - \hat{x} - \hat{y}\right) + \mathbf{r}_2 \hat{x} + \mathbf{r}_3 \hat{y}.
\end{equation}
We take the pair \(\left\{\mathbf{V}_h \times W_h\right\}\) on \(T_h\) introduced in \cref{sec:space-discr} to be the \(RT_1\) spaces \cite{R-T}. In the reference triangle \(\hat{E}\) these spaces are defined as
\begin{subequations}
\begin{gather}
\mathbf{V}(\hat{E}) = \big(P_1(\hat{E})\big)^2 + \mathbf{x}P_1(\hat{E}), \\
W(\hat{E}) = P_1(\hat{E}),
\end{gather}
\end{subequations}
with \(P_k(\hat{E})\) the space of bivariate polynomials of degree \(\le k\) on \(\hat{E}\). 
The dimension of $\mathbf{V}_1(\hat E)$ is 8. On any triangle, we can take two normal component degrees of freedom (DOFs) associated with each edge and two DOFs associated with the vector at the center of mass \cite{BBF}. On the reference triangle, we denote the center of mass as \( \hat{\mathbf{r}}_4 = \left(1/3, 1/3\right)^T \). We choose the two DOFs associated to each edge \(e\) be the values of \(\mathbf{v} \cdot \mathbf{n}\) at the two vertices of the edge \cite{W-Y}. This will allow us, through the use of a 
quadrature rule, to localize the interaction of the edge DOFs around mesh vertices. 
For $k = 1,2,3$, let \(\hat{\mathbf{n}}_{k,l}\), $l=1,2$, denote the outward unit normal vector to the two edges that share vertex $\hat\br_k$. For the center of mass $\hat\br_4$, let $\hat{\mathbf{n}}_{4,1} = (1 \,\ 0)^T$ and $\hat{\mathbf{n}}_{4,2} = (0 \,\ 1)^T$, see \cref{general}. For each of the points \(\hat{\mathbf{r}}_i, i = 1,\ldots,4\), the two associated \(RT_1\) basis functions \(\hat{\mathbf{v}}_{i,j}\), $j=1,2$, are defined by setting
\begin{equation} \label{eq1}
  \hat{\mathbf{v}}_{i,j}(\hat\br_k) \cdot \hat{\mathbf{n}}_{k,l} = \delta_{ik} \delta_{jl} , \qquad
i,k = 1,\ldots,4, \ \ j,l = 1,2.
\end{equation} 
where \(\delta_{\alpha\beta} = 1\) if \(\alpha = \beta\), \(\delta_{\alpha\beta} = 0\) if \(\alpha \neq \beta\). The basis functions are listed in \cref{settings}, see also \cite{Radu}, where in the second and third column of the table we give their \(x\) and \(y\) components.

\begin{figure}%[H]
	\centering
    \begin{subfigure}{0.33\linewidth}
		\includegraphics[width=\linewidth]{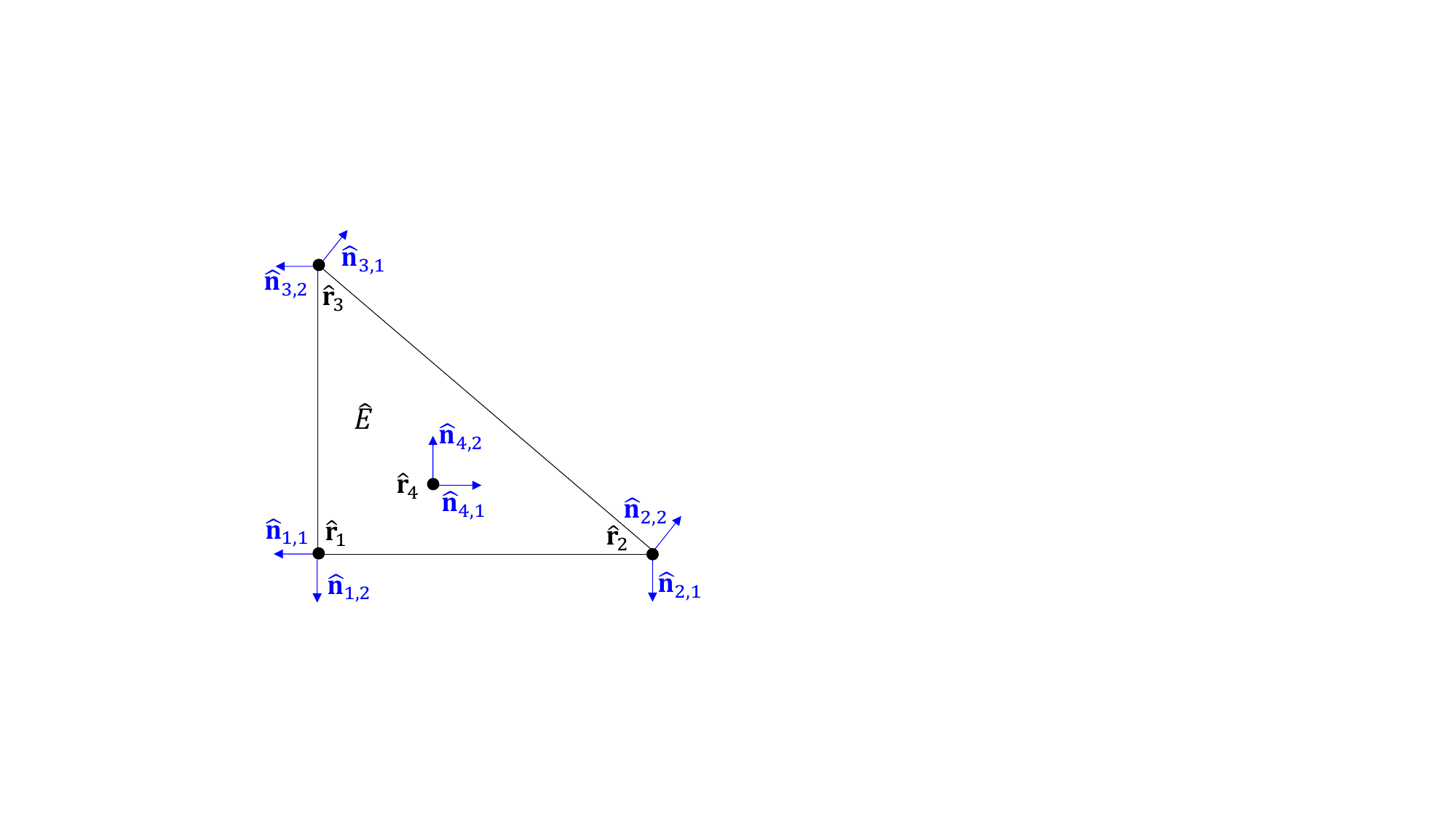}
		\caption{Degrees of freedom of $\bV(\hat E)$.}
		\label{general}
	\end{subfigure}
	\begin{subfigure}{0.66\linewidth}
	\centering
	\begin{tabular}{c|c c} 
		\hline 
		$\hat{\mathbf{v}}_{1,1}$ & $-2\hat{x}^2 -\hat{x}\hat{y} +3 \hat{x} + \hat{y} -1$ & $-\hat{y}^2 -2\hat{x}\hat{y} + \hat{y}$ \\ 
		%\hline
		$\hat{\mathbf{v}}_{1,2}$ & $-\hat{x}^2 -2\hat{x}\hat{y} + \hat{x} $ & $-2\hat{y}^2 -\hat{x}\hat{y} + \hat{x} + 3\hat{y} - 1$ \\ 
		%\hline
		$\hat{\mathbf{v}}_{2,1}$ & $\hat{x}^2 -\hat{x}\hat{y}$ & $-\hat{y}^2 +\hat{x}\hat{y} - \hat{x} + \hat{y}$  \\ 
		%\hline
		$\hat{\mathbf{v}}_{2,2}$ & $\sqrt{2}(2\hat{x}^2 +\hat{x}\hat{y} - \hat{x}$) & $\sqrt{2}(\hat{y}^2 +2\hat{x}\hat{y} - \hat{y})$ \\
		%\hline
		$\hat{\mathbf{v}}_{3,1}$ & $\sqrt{2}(\hat{x}^2 + 2\hat{x}\hat{y} - \hat{x})$ & $\sqrt{2}(2\hat{y}^2 +\hat{x}\hat{y} - \hat{y})$\\
		%\hline
		$\hat{\mathbf{v}}_{3,2}$ & $-\hat{x}^2 + \hat{x}\hat{y} + \hat{x} -\hat{y}$ & $\hat{y}^2 -\hat{x}\hat{y}$ \\ 
		%\hline
		$\hat{\mathbf{v}}_{4,1}$ & -3$\left(\hat{x} \hat{y}\right)$ - 6 $\left(\hat{x}^2 - \hat{x}\right)$  & -3$\left(\hat{y}^2 - \hat{y}\right)$ - 6 $\left(\hat{x} \hat{y}\right)$ \\ 
		%\hline
		$\hat{\mathbf{v}}_{4,2}$ & -6$\left(\hat{x} \hat{y}\right)$ - 3 $\left(\hat{x}^2 - \hat{x}\right)$  & -6$\left(\hat{y}^2 - \hat{y}\right)$ - 3 $\left(\hat{x} \hat{y}\right)$\\
		\hline
	\end{tabular}
        \bigskip
		\caption{Basis functions of $\bV(\hat E)$.}
		\label{settings}
	\end{subfigure}
	\caption{Degrees of freedom of $\bV(\hat E)$ and the associated basis functions.}
	\label{figure_Piola}
\end{figure}

The \(RT_1\) spaces on any element \(E \in T_h\) are defined via the transformations
\begin{subequations} \label{eq:transf}
\begin{gather}
  \mathbf{v} \longleftrightarrow  \mathbf{\hat{v}} : \mathbf{v} = \frac{1}{\left| \mathbf{J}_E\right|} \mathbf{J}_E \mathbf{\hat{v}}, \label{eq:Piola} \\
  w \longleftrightarrow \hat{w} : w = \hat{w},
\end{gather}
\end{subequations}
where \eqref{eq:Piola} is known as the Piola transformation. Here \(\mathbf{J}_E = \left[\mathbf{r}_{21}, \mathbf{r}_{31}\right]\) is the the Jacobian matrix associated with the mapping \eqref{eq:mapping}, with \(\mathbf{r}_{ij} = \mathbf{r}_i - \mathbf{r}_j\). The determinant of the Jacobian matrix is \(\left| \mathbf{J}_E\right| = 2 \left|E\right|\), and \(\left|E\right|\) is the area of triangle \(E\).
The Piola transformation has the properties \cite{BBF}
\begin{equation} \label{eq:Piola2}
  \nabla \cdot \mathbf{v} = \frac{1}{\left| \mathbf{J}_E\right|} \nabla_{\hat{\mathbf{x}}} \cdot \mathbf{\hat{v}},
  \quad \bv\cdot\bn_e = \frac{1}{|e|}\hat\bv\cdot\hat\bn_e,
\end{equation}
where $F_E: \hat e \to e$, $\bn_e$ and $\hat\bn_e$ are the unit normal vectors on the edges $e$ and $\hat e$, respectively, and $|e|$ is the length of $e$. Since the Piola transformation preserves the normal components of the vector, it is suitable to enforce the continuity of \(\mathbf{v} \cdot \mathbf{n}\) across any edge \(e\) \cite{BBF}.

The \(RT_1\) spaces are defined on \(T_h\) as
\begin{subequations}
\begin{gather}
\mathbf{V}_h= \left\{ \mathbf{v} \in \mathbf{V} : \mathbf{v}|_E \longleftrightarrow \hat{\mathbf{v}}, \ \hat{\mathbf{v}} \in \hat{\mathbf{V}} (\hat{E}) \ \forall E\in T_h \right\},  \\
W_h= \left\{ w \in W : w|_E \longleftrightarrow \hat{w}, \ \hat{w} \in \hat{W}(\hat{E}) \ \forall E\in T_h  \right\},
\end{gather}
\end{subequations}
where the transformations \(\mathbf{v} \longleftrightarrow \hat{\mathbf{v}}\) and \(w \longleftrightarrow \hat{w}\) are defined in \eqref{eq:transf}. In \cref{fig_Piola} we show the mapping $F_E: \hat E \to E$ with the DOFs of the $RT_1$ spaces, where the DOFs for \(W_h\), which is discontinuous piecewise linear, are the values at any three points within an element $E$.

\begin{figure}%[H]
\centering
\includegraphics[width=0.75\textwidth]{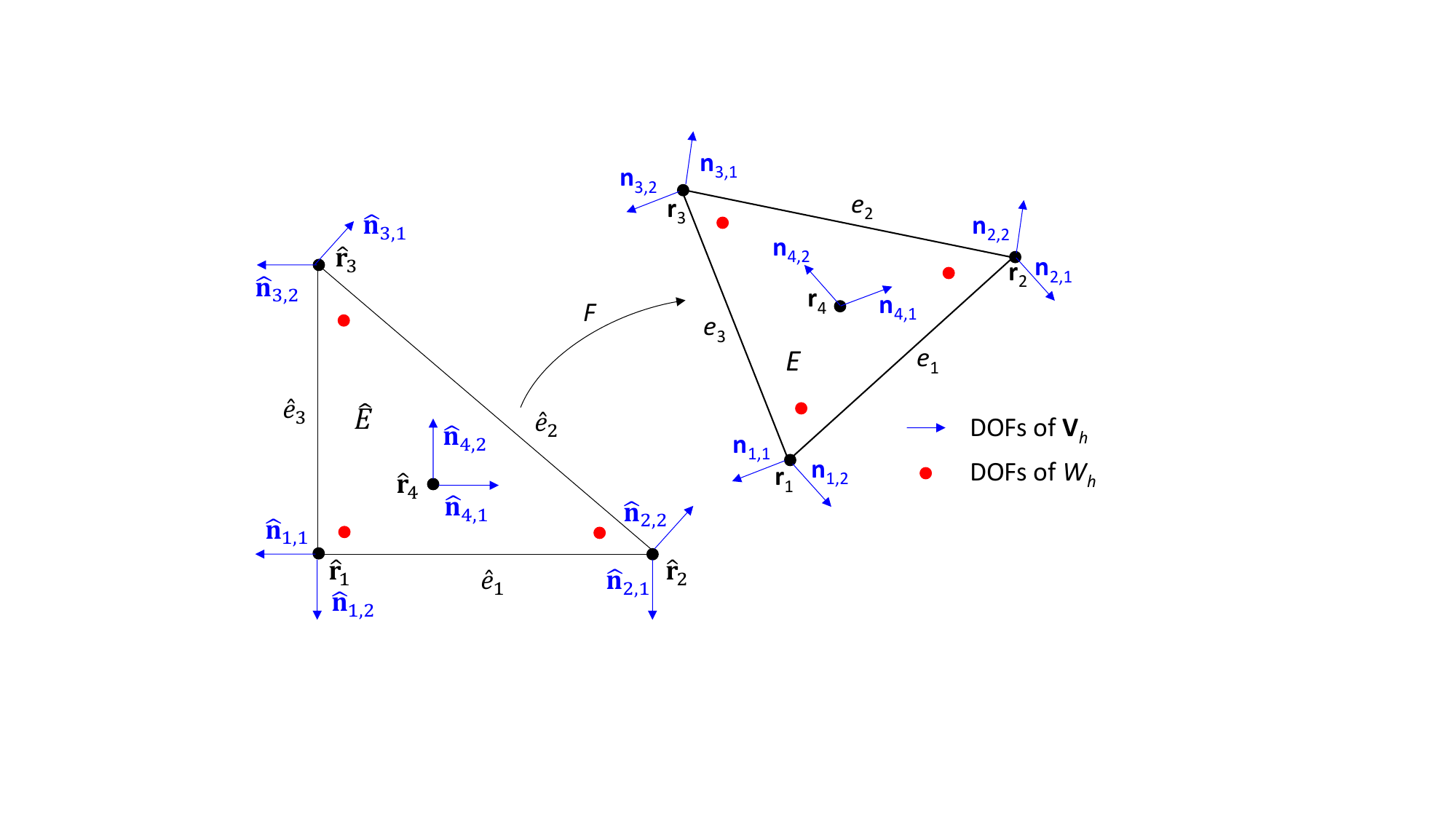}
\caption{Mapping from the reference triangle \(\hat{E}\) to a computational triangle \textit{E} with DOFs of \(RT_1\) in \(\hat{E}\) and \textit{E}.}
\label{fig_Piola}
\end{figure}

\subsection{The quadrature rule}\label{MPMFEM} 
Given any pair of vector functions \(\mathbf{s}, \mathbf{v} \in \bV_h\), we define the quadrature rule
\begin{equation} \label{eq:qr_elem}
  \left(\mathbf{s},\mathbf{v}\right)_Q = \sum_{E \in T_h} \left(\mathbf{s},\mathbf{v}\right)_{Q,E}
= \sum_{E \in T_h} |E| \sum_{i=1}^{4} \omega_i \, \mathbf{s}\left(\mathbf{r}_i\right) \cdot \mathbf{v}\left(\mathbf{r}_i\right),
\end{equation}
where the quadrature weights are $\omega_i = 1/12$, $i = 1,2,3$, and \(\omega_4 = 3/4\). This quadrature rule is proposed in \cite{Radu} and it is second order accurate. The vertex vector \(\mathbf{v}(\mathbf{r}_i)\), $i = 1,2,3$, can be uniquely obtained from the degrees of freedom $\bv(\br_i)\cdot\bn_{i,1}$ and $\bv(\br_i)\cdot\bn_{i,2}$, i.e., its normal components to the two edges sharing vertex \(\mathbf{r}_i\). Similarly, the vector at the center of mass $\br_4$ can be obtained from the degrees of freedom $\bv(\br_4)\cdot\bn_{4,1}$ and $\bv(\br_4)\cdot\bn_{4,2}$.

We apply the quadrature rule to the bilinear form \(( \nu^{-1} \bsi_h^{n+1}, \boldsymbol{\tau}_h)\) in \eqref{eq:PP1} in the prediction problem, and to the bilinear form \(\left(\frac{\mathbf{u}_h^{n+1}-\tilde{\mathbf{u}}_h^{n+1}}{\Delta t}, \mathbf{v}_h\right)\) in \eqref{eq:Projecproblem_1} in the projection problem. Since the quadrature rule \eqref{eq:qr_elem} couples only the two basis functions associated with the quadrature point $\br_i$, see \cite{W-Y} for more details, the viscous stress \(\boldsymbol{\sigma}^{n+1}_{x\left(y\right),h}\) and the velocity $\bu_h^{n+1}$ can be locally eliminated, resulting in symmetric and positive definite systems for $\tilde\bu_h^{k+1}$ in \eqref{eq:Predproblem} and $\Psi_h^{n+1}$ in \eqref{eq:Projecproblem}, respectively.

\subsection{Predictor problem}  \label{PP}
We solve separately for the \(x\) and \(y\) components of \eqref{eq:Predproblem}. Let $\bsi_{x,h}^{n+1}$ and $\bsi_{y,h}^{n+1}$ denote the first and second rows of $\bsi_{h}^{n+1}$, respectively. Let $\tilde u_{x,h}^{n+1}$ and $\tilde u_{y,h}^{n+1}$ denote the $x$ and $y$ components of $\tilde\bu_h^{n+1}$, with a similar notation for $q_{x,h}^n$ and $q_{y,h}^n$. For the boundary data, let $\boldsymbol{\Sigma}_b = (\Sigma_{x,b},\Sigma_{y,b})^T$ and $\bu_b = (u_{x,b},u_{y,b})^T)$. For brevity, we present our method only for the \(x\)-component. 

With the quadrature rule introduced in \eqref{eq:qr_elem}, the $x$-component of problem \eqref{eq:Predproblem} becomes: find \(\bsi_{x,h}^{n+1} \in \mathbf{V}_h  : \bsi_{x,h}^{n+1} \cdot \mathbf{n} = Q_h^\Gamma(\Sigma_{x,b}^{n+1} - \Psi_b^n \ n_x)\) on \(\Gamma_n\) and \(\tilde{u}_{x,h}^{n+1} \in W_h\), such that
\begin{subequations} \label{eq:Predproblem_x} 
\begin{align}
& \left( \nu^{-1} \bsi_{x,h}^{n+1}, \boldsymbol{\tau}_h \right)_Q - \left( \tilde{u}_{x,h}^{n+1}, \nabla \cdot \boldsymbol{\tau}_h \right) =- \langle u_{x,b}^{n+1}, \boldsymbol{\tau}_h \cdot \mathbf{n} \rangle _{\Gamma_d} \quad \forall \boldsymbol{\tau}_h \in \mathbf{V}_{h,0,\Gamma_n}, \label{eq:Predproblem_x_1} \\
& \left( \frac{\tilde{u}_{x,h}^{n+1} - u_{x,h}^n}{\Delta t}, \xi_h\right) + \left( \nabla \cdot \bsi_{x,h}^{n+1}, \xi_h\right) + \left( q_{x,h}^n, \xi_h\right) = 0 \quad \forall \xi_h \in W_h. \label{eq:Predproblem_x_2}
\end{align}
\end{subequations}

\subsubsection{Local stress elimination in the predictor problem} \label{reduction_PP}
In this subsection we describe the local elimination of the viscous stress $\bsi_{x,h}^{n+1}$ in the predictor problem \eqref{eq:Predproblem_x} in terms of the $x$-velocity $\tilde{u}_x^{n+1}$.

\paragraph{The case of any vertex.} \label{any_internal_vertex}

Consider first an interior vertex \(\mathbf{r}\) shared by \(\mathcal{K}\) edges and \(\mathcal{K}\) elements, see \cref{int_DOFs} (left), where $\mathcal{K} = 5$. Let $E_i$ and \(e_i\), $i = 1,\ldots,\mathcal{K}$, be, respectively, the elements and edges sharing \(\mathbf{r}\). Let \(\boldsymbol{\tau}_{i} \in \mathbf{V}_h\) be the basis functions on the edges $e_i$ associated with vertex $\br$, and let $\sigma_{x,i}^{n+1}$ be the associated DOFs of $\bsi^{n+1}_{x,h}$. 

\begin{figure}%[H]
  \begin{minipage}{.6\textwidth}
\centering
\includegraphics[width=\textwidth]{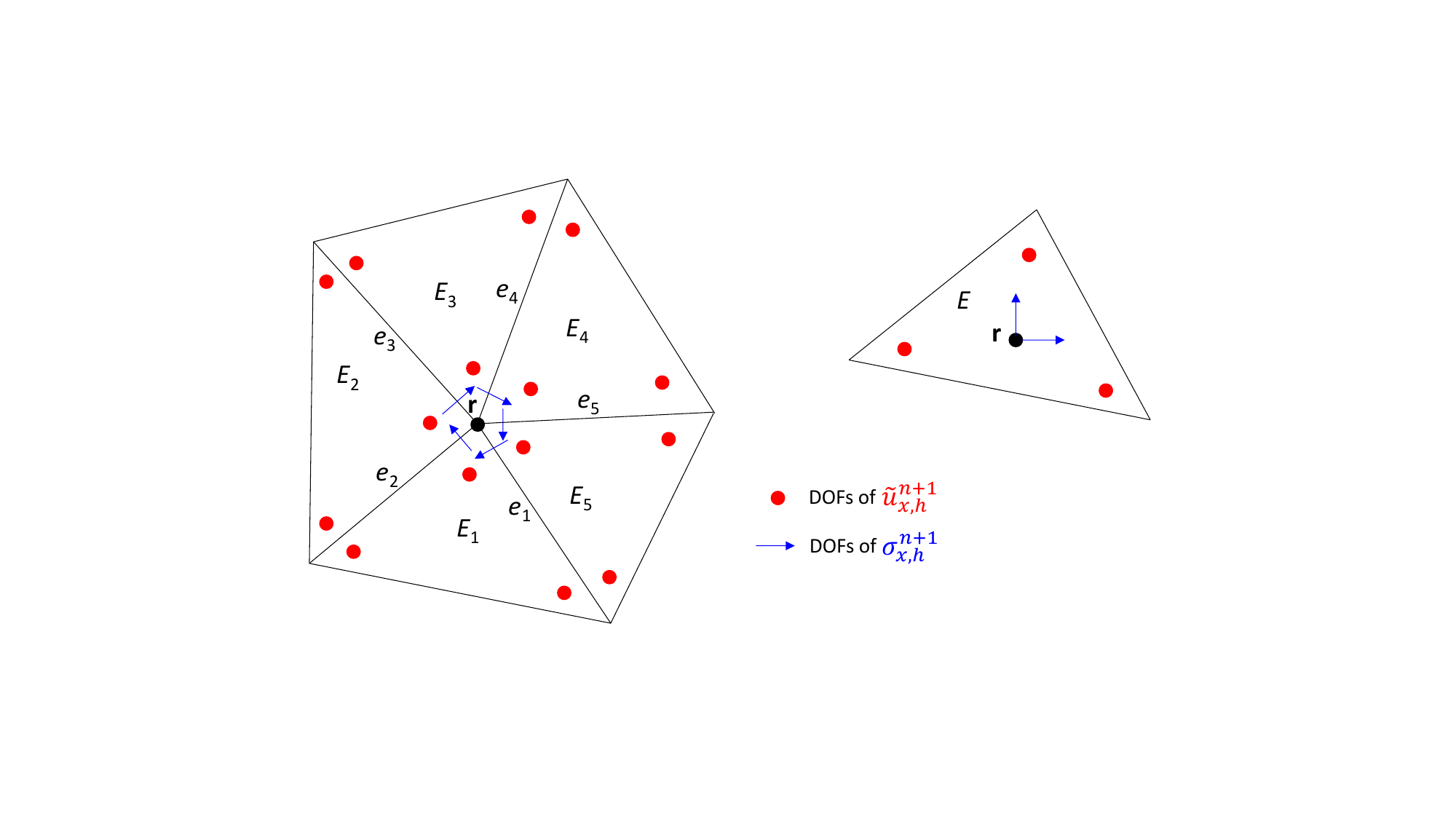}
  \end{minipage}
\hfill
  \begin{minipage}{.35\textwidth}
\centering  
\includegraphics[width=\textwidth]{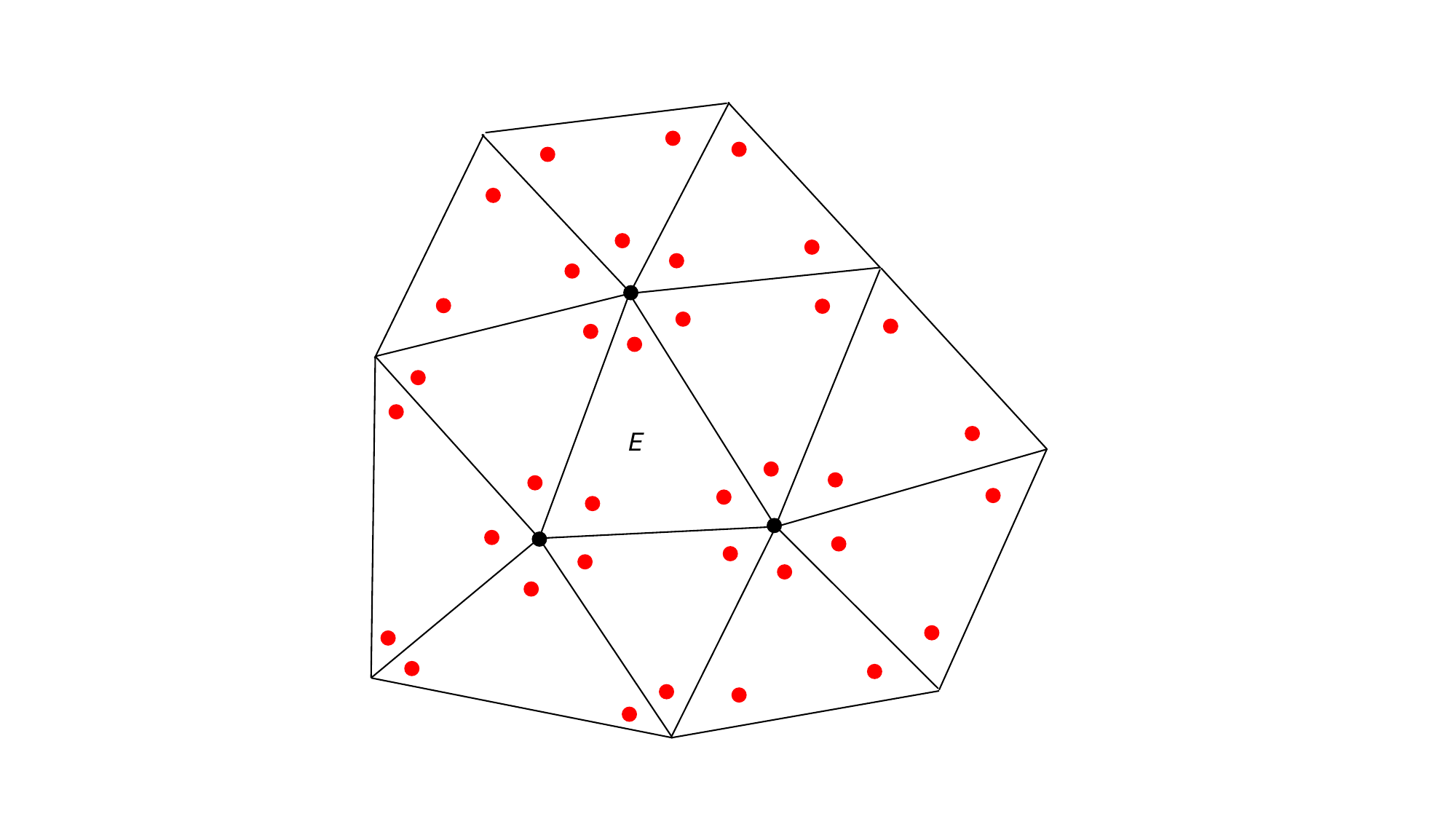}
\end{minipage}
\caption{Interaction of the DOFs of \(\bsi^{n+1}_{x,h}\) for a vertex (left) and a center of mass (center). Stencil of the reduced system for $\tilde\bu_h^{n+1}$ (right).}
\label{int_DOFs}
\end{figure}

Thanks to the properties of the quadrature rule discussed in \cref{MPMFEM}, the bilinear form $\left(\nu^{-1} \cdot, \cdot\right)_Q$ localizes the interactions of the DOFs of \(\bsi^{n+1}_{x,h}\). For example, taking \( \btau_h = \boldsymbol{\tau}_1 \) in \eqref{eq:Predproblem_x_1} couples \(\sigma^{n+1}_{x,1}\) only with \(\sigma^{n+1}_{x,2}\) and \(\sigma^{n+1}_{x,5}\). In a similar way, \(\sigma^{n+1}_{x,2}\) is coupled only with \(\sigma^{n+1}_{x,1}\) and \(\sigma^{n+1}_{x,3}\), and so on. 
Therefore, taking \(\boldsymbol{\tau}_h = \boldsymbol{\tau}_1, \ldots, \boldsymbol{\tau}_\mathcal{K}\) in \eqref{eq:Predproblem_x_1} results in a \( \mathcal{K} \times \mathcal{K} \) local linear system for the \(\sigma^{n+1}_{{x,i}}\) unknowns,  \(i=1, \dots, \mathcal{K}\):
\begin{equation} \label{eq:discr_x_0}
  \sum_{j=i-1}^{i+1} \sigma^{n+1}_{x,j} \left((\nu^{-1}\boldsymbol{\tau}_j,\boldsymbol{\tau}_i)_{Q,E_i}
+ (\nu^{-1}\boldsymbol{\tau}_j,\boldsymbol{\tau}_i)_{Q,E_{i+1}} \right)  
= (\tilde{u}^{n+1}_{x,h},\nabla \cdot \boldsymbol{\tau}_i)_{E_i}
+ (\tilde{u}^{n+1}_{x,h},\nabla \cdot \boldsymbol{\tau}_i)_{E_{i+1}}
, \quad i = 1, \dots, \mathcal{K},
\end{equation}
where we set $i - 1 := \mathcal{K}$ for $i = 1$ and $i + 1 := 1$ for $i = \mathcal{K}$.

We apply a 3-point Gaussian integration rule to compute \((\tilde{u}^{n+1}_{x,h},\nabla \cdot \boldsymbol{\tau}_h)_{E}\). It is exact for quadratic functions, and therefore it is exact for \((\tilde{u}^{n+1}_{x,h},\nabla \cdot \boldsymbol{\tau}_h)_{E}\), since \(\tilde{u}^{n+1}_{x,h} \in P_1\left(E\right)\) and \(\nabla \cdot \boldsymbol{\tau}_h \in P_1\left(E\right)\).

We write the local linear system \eqref{eq:discr_x_0} in a matrix-vector form as 
\begin{equation} \label{eq:vect_matr}
	\mathbf{A} \mathbf{\Sigma}_x = \mathbf{B}^T \mathbf{U}_x,
\end{equation} 
where 
\begin{itemize}
\item \(\mathbf{A}\) is a \(\left(\mathcal{K} \times \mathcal{K}\right)\) matrix with coefficients \(A_{i,j} = \left(\nu^{-1}\boldsymbol{\tau}_j,\boldsymbol{\tau}_i\right)_Q\), $i,j = 1,\ldots,\mathcal{K}$, 

\item \(\mathbf{\Sigma}_x\) is a \( \left(\mathcal{K} \times 1 \right) \) vector with coefficients \(\sigma^{n+1}_{x,i}\), \(i=1, \dots , \mathcal{K}\),

\item \(\mathbf{B}^T\) is a \(\left(\mathcal{K} \times 3\mathcal{K} \right)\) matrix, whose coefficients come from \((\tilde{u}^{n+1}_{x,h},\nabla \cdot \boldsymbol{\tau}_i)_{E_i}\) + \((\tilde{u}^{n+1}_{x,h},\nabla \cdot \boldsymbol{\tau}_i)_{E_{i+1}}\), $i = 1,\ldots,\mathcal{K}$,

\item \(\mathbf{U}_x\) is a \(\left(3\mathcal{K} \times 1\right)\) vector, whose coefficients are the three degrees of freedom of $\tilde{u}^{n+1}_{x,h}$ within each triangle $E_i$, $i = 1,\ldots,\mathcal{K}$.
\end{itemize}

The \( \left(\mathcal{K} \times \mathcal{K}\right) \) matrix \(\mathbf{A}\) is symmetric and positive definite, see \cite{W-Y}, and therefore the local system \eqref{eq:vect_matr} is solvable, which allows us to 
express the $\mathcal{K}$ normal components of the viscous stress DOFs \(\sigma^{n+1}_{x,i}\),  \(i=1, \dots, \mathcal{K}\) that share vertex $\br$ in terms of the $3\mathcal{K}$ DOFs of \(\tilde{u}^{n+1}_{x,h}\) on the $\mathcal{K}$ triangles that share vertex $\br$.

Next, consider a boundary vertex \(\mathbf{r}\). In this case the number of triangles that share it is $\cK-1$, so \(\mathbf{B}^T\) is a \(\left(\mathcal{K} \times 3(\mathcal{K}-1) \right)\) matrix and \(\mathbf{U}_x\) is a \(\left(3(\mathcal{K}-1) \times 1\right)\) vector. If $\br$ is on \(\overline\Gamma_d\), according to \eqref{eq:Predproblem_x_1}, in the local system \eqref{eq:vect_matr} we also have the contribution \(- \langle u^{n+1}_{x,b}, \boldsymbol{\tau}_h \cdot \mathbf{n} \rangle _{\Gamma_d}\). This is computed by numerical integration over the boundary edge(s) \(e_i\) on \(\Gamma_d\) that share $\br$ and system \eqref{eq:Predproblem_x} changes accordingly to
\begin{equation} \label{eq:vect_matr_gamma_d}
\mathbf{A} \mathbf{\Sigma}_x = \mathbf{B}^T \mathbf{U}_x + \mathbf{G}_{d,x} ,
\end{equation} 
where \(\mathbf{G}_{d,x}\) is a \(\left(\mathcal{K} \times 1 \right)\) vector with nonzero coefficients only for the rows corresponding to the boundary edge(s) on \(\Gamma_d\) sharing \(\mathbf{r}\). If \(\mathbf{r}\) is a boundary vertex on \(\overline\Gamma_n\), then the system \eqref{eq:Predproblem_x} is modified accordingly to incorporate the essential stress boundary condition, resulting in a system of type
\begin{equation} \label{eq:vect_matr_gamma_n}
\tilde{\mathbf{A}} \mathbf{\Sigma}_x = \mathbf{B}^T \mathbf{U}_x + \mathbf{G}_{n,x},
\end{equation} 
where \(\mathbf{G}_{n,x}\) is a \(\left(\mathcal{K} \times 1 \right)\) vector. 

\paragraph{The case of any center of mass.} \label{mp_vertex_PP}
We now consider any center of mass \(\mathbf{r}\) in triangle \textit{E}, see \cref{int_DOFs} (center). Let \(\boldsymbol{\tau}_i \in \mathbf{V}_h\), \(i = 1,2\), be the basis functions of \(\bsi^{n+1}_{x,h}\) associated with \(\mathbf{r}\). Let \(\sigma^{n+1}_{x,1}\) and \(\sigma^{n+1}_{x,2}\) be the two DOFs of $\bsi^{n+1}_{x,h}$ associated with $\br$, see \eqref{eq1}. Let \(\tilde{u}^{n+1}_{x,l}\), \(l=1,\dots,3\), be the three DOFs of $\tilde u_{x,h}^n$ in $E$. We recall that, due to the localization property of the quadrature rule $\left(\nu^{-1} \cdot, \cdot\right)_Q$, \(\sigma^{n+1}_{x,1}\) and \(\sigma^{n+1}_{x,2}\) are coupled only with each other. Therefore, taking \(\boldsymbol{\tau}_h = \boldsymbol{\tau}_1 \) and \(\boldsymbol{\tau} = \boldsymbol{\tau}_2 \) in \eqref{eq:Predproblem_x_1}, we obtain a \(2 \times 2\) linear system for \(\sigma^{n+1}_{x,1}\) and \(\sigma^{n+1}_{x,2}\):
\begin{equation} \label{eq:discr_x_01}
  \sum_{j=1,2} \sigma^{n+1}_{{x,j}}\left(\nu^{-1}\boldsymbol{\tau}_j,\btau_i\right)_{Q,E}
= \left(\tilde{u}^{n+1}_{x,h},\nabla \cdot \boldsymbol{\tau}_i\right)_{E}
  \qquad i = 1,2.
\end{equation}
We write the local linear system in a matrix-vector form as $\mathbf{A} \mathbf{\Sigma}_x = \mathbf{B}^T \mathbf{U}_x$, where
\begin{itemize}
\item \(\mathbf{A}\) is a $(2 \times 2)$ matrix with coefficients \(A_{i,j} = \left(\nu^{-1}\boldsymbol{\tau}_j,\boldsymbol{\tau}_i\right)_Q\), $i,j = 1,2$, 

\item \(\mathbf{\Sigma}_x\) is a \( \left(2 \times 1 \right) \) vector with coefficients \(\sigma^{n+1}_{x,i}\), \(i=1,2\),

\item \(\mathbf{B}^T\) is a \(\left(2 \times 3 \right)\) matrix, whose coefficients come from \((\tilde{u}^{n+1}_{x,h},\nabla \cdot \boldsymbol{\tau}_i)_{E}\), $i = 1,2$,

\item \(\mathbf{U}_x\) is a \(\left(3 \times 1\right)\) vector, whose coefficients are the velocity components \(\tilde{u}^{n+1}_{x,l}\), \(l=1,\ldots,3\).
\end{itemize}

As in the case of a vertex, the matrix matrix \(\mathbf{A}\) is symmetric and positive definite. Solving the $2\times 2$ system allows to express the two stress DOFs \(\sigma^{n+1}_{x,1}\) and \(\sigma^{n+1}_{x,2}\) that share $\br$ in terms of the three
velocity DOFs on $E$, \(\tilde{u}^{n+1}_{x,l}\), \(l=1,\dots,3\).

\subsubsection{Reduction of the predictor problem to a system for \(\tilde u_x\) and its solution}\label{assembly_PS}
With the local stress elimination presented in \cref{reduction_PP}, the MFMFE method in the predictor problem can be reduced to a system for \(\tilde u_{x,h}^{n+1}\) with three DOFs per element. We briefly describe this procedure.
The algebraic system arising from \eqref{eq:Predproblem_x} is
\begin{equation} \label{system_global}
		\begin{pmatrix}
			\mathbb{A} \quad -\mathbb{B}^T\\
		    \mathbb{B} \ \qquad \mathbb{D}
		\end{pmatrix} \begin{pmatrix}
		\boldmath{\Sigma}_x \\ \mathbb{U}_x
		\end{pmatrix} = \begin{pmatrix}
		\mathbb{G}_x \\ \mathbb{Q}_x + \mathbb{F}_x
		\end{pmatrix},
\end{equation}
where 

\begin{itemize}
\item \(\mathbb{A}\) is a \(\left(\mathfrak{K} \times \mathfrak{K}\right)\) block diagonal matrix, with \(\mathfrak{K} = 2 \times \mathfrak{S}_T + 2 \times N_T\), \(\mathfrak{S}_T\) is the total number of the edges and $N_T$ is the total number of triangles, as specified in \cref{sect_ge}, which is obtained by assembling the local block matrices \(\mathbf{A}\), described in \ref{any_internal_vertex} and \ref{mp_vertex_PP},

\item \(\boldmath{\Sigma}_x\) is a \(\left(\mathfrak{K} \times 1\right)\) vector with coefficients the DOFs of $\bsi_{x,h}^{n+1}$, given by assembling the local vectors \(\mathbf{\Sigma}_x\), defined in \ref{any_internal_vertex} and \ref{mp_vertex_PP},

\item \(\mathbb{B}^T\) is a \(\left(\mathfrak{K} \times 3N_T\right)\) matrix, given by assembling the block local matrices \(\mathbf{B}^T \), defined \ref{any_internal_vertex} and \ref{mp_vertex_PP},

\item \(\mathbb{D}\) is a \(\left(3N_T \times 3N_T\right)\) block diagonal matrix, with blocks associated with
  $\frac{1}{\Delta t}\left(\tilde{u}_{x,h}^{n+1},\xi_h\right)_E$,

\item \(\mathbb{U}_x\) is a \(\left(3N_T \times 1\right)\) vector with coefficients the DOFs of $\tilde{u}_{x,h}^{n+1}$,

\item \(\mathbb{G}_x\) is a \(\left(\mathfrak{K} \times 1\right)\) vector, given by assembling the local vectors \(\mathbf{G}_{d,x}\) and \(\mathbf{G}_{n,x}\), defined in \ref{any_internal_vertex},

\item \(\mathbb{Q}_x\) and \(\mathbb{F}_x\) are \(\left(3N_T \times 1\right)\) vectors corresponding to $-\left( q_{x,h}^n, \xi_h\right)$ and
  $\frac{1}{\Delta t}\left(u_{x,h}^{n},\xi_h\right)$, respectively. 
\end{itemize}

\begin{remark}
Similarly to the integrals in $\mathbb{B}$, we use a three-point Gaussian quadrature rule for computing the integrals in $\mathbb{D}$, \(\mathbb{Q}_x\), and \(\mathbb{F}_x\). All components in the associated bilinear forms are linear, recall \eqref{uh-in-Wh}, thus the integrands are quadratic. Therefore the computation of all integrals is exact.
\end{remark}

The stress unknown \(\boldmath{\Sigma}_x\) in \eqref{system_global} can be eliminated by inverting the matrix $\mathbb{A}$, which involves solving small local systems associated with the diagonal blocks of $\mathbb{A}$, as described in \ref{any_internal_vertex} and \ref{mp_vertex_PP}. This leads to a \(\left(3N_T \times 3N_T\right)\) system for \(\mathbb{U}_x\):
\begin{equation} \label{eq:sys_velox_PP}
	\left(\mathbb{D} + \mathbb{B} \ \mathbb{A}^{-1} \ \mathbb{B}^T \right)\mathbb{U}_x = \mathbb{Q}_x + \mathbb{F}_x + \mathbb{B} \mathbb{A}^{-1} \mathbb{G}_x,
\end{equation}
where the three DOFs of \(\tilde{u}^{n+1}_{x,h}\) within each triangle \(E\) are coupled with the three DOFs of all triangles that share a vertex with $E$, see \cref{int_DOFs} (right).

The matrix in \eqref{eq:sys_velox_PP} is symmetric and positive definite \cite{W-Y}. The system is solved applying a preconditioned conjugate gradient method with incomplete Cholesky factorization \cite{Dongarra}, resulting in a very efficient and fast procedure. The coefficients of the system matrix depend only on geometrical quantities, as well as on the fluid viscosity \(\nu\) and the time step size \(\Delta t\). Therefore the matrix is factorized only once, before the beginning of the time loop, saving a lot of computational effort.

We proceed in the same way along the \(y\) direction, solving a system with the same matrix in \eqref{eq:sys_velox_PP}.

At the end of the predictor problem we obtain the predicted velocity vector $\tilde{\bu}^{n+1}_{h}|_E \in (P_1(E))^2$. The normal flux continuity at the element interfaces is lost. 

\subsection{Computation of \(\Psi_b^{n+1} \) on \(\Gamma_n\)} \label{Psi_Gamma_n}
We compute the kinematic pressure \(\Psi_b^{n+1}\) on any edge $e \in \Gamma_n$ as
\begin{equation*}
\Psi_b^{n+1}|_e = \left(\boldsymbol{\Sigma}_b^{n+1}|_e + \nu \nabla \tilde{\mathbf{u}}_h^{n+1}|_e \, \mathbf{n}\right) \cdot \mathbf{n},
\end{equation*}
where the tensor $\nabla \tilde{\mathbf{u}}_h^{n+1}$ is computed on the element $E$ with edge $e$. We note that, since $\tilde{\mathbf{u}}_h^{n+1} \in (P_1(E))^2$, $\nabla \tilde{\mathbf{u}}_h^{n+1}$ is constant on $E$.

\subsection{Projection problem} \label{PjP}
Using the quadrature rule in \eqref{eq:qr_elem}, the discrete variational formulation of the  projection problem in \eqref{eq:Projecproblem} is: find \(\mathbf{u}_h^{n+1} \in \mathbf{V}_h : \mathbf{u}_h^{n+1} \cdot \mathbf{n} = Q_h^\Gamma\left(\mathbf{u}_b \cdot \mathbf{n}\right)\) on \(\Gamma_d\) and \(\Psi_h^{n+1} \in W_h\) such that
\begin{subequations} %\label{eq:Projecproblem}
\begin{align*}
  &  \hskip - .1cm \bigg( \frac{\mathbf{u}_h^{n+1}-\tilde{\mathbf{u}}_h^{n+1}}{\Delta t}, \mathbf{v}_h\bigg)_Q
  - ( \Psi_h^{n+1} - \Psi_h^n,\nabla \cdot \mathbf{v}_h ) =
  - \langle \Psi_b^{n+1} - \Psi_b^n, \mathbf{v}_h \cdot \mathbf{n} \rangle_{\Gamma_n}
  \ \forall \mathbf{v}_h \in \mathbf{V}_{h,0,\Gamma_d}, \\ %\label{eq:Projecproblem_1} \\
  & \hskip -.1cm \left( \nabla \cdot \mathbf{u}_h^{n+1},w_h\right) = 0 \quad \forall w_h \in W_h.
  %\label{eq:Projecproblem_2} 
\end{align*}
\end{subequations}
The method is the same as the predictor problem method for $\bsi_{x,h}^{n+1} \in \mathbf{V}_h$ and $\tilde{u}_{x,h}^{n+1} \in W_h$ in \eqref{eq:Predproblem_x}. We apply the same local elimination procedure as described in \cref{reduction_PP,assembly_PS}, this time for the velocity $\mathbf{u}_h^{n+1}$, which results in a symmetric and positive system for $\Psi_h^{n+1}$ of type \eqref{eq:sys_velox_PP}, where the three DOFs of $\Psi_h^{n+1}$ within each triangle \(E\) are coupled with the three DOFs of all triangles that share a vertex with $E$. After solving the system, the velocity $\mathbf{u}_h^{n+1}$ is easily recovered by local postprocessing.

\subsection{Pressure gradient upgrade}

The last step is the pressure gradient update \eqref{eq:up_q}. Since $\mathbf{q}_h^{n+1}$ and the test function $\bxi_h$ are in the same discrete space $(W_h)^d$, this involves a simple update of the DOFs of $\mathbf{q}_h^{n+1}$. In particular, for any degree of freedom point $\br$,
$$
\mathbf{q}_h^{n+1}(\br) = \mathbf{q}_h^n(\br) - \frac{\mathbf{u}_h^{n+1}(\br) - \tilde{\mathbf{u}}_h^{n+1}(\br) }{\Delta t} .
$$

\begin{remark}
The method presented above requires solving at each time step three symmetric positive definite systems, two for the components of the predicted velocity in the predictor step, and one for the pressure in the projection step. Due to the local elimination of the viscous stress and the corrected velocity, the systems have only three unknowns per element, which is a significant reduction from the original saddle point systems. The resulting algorithm is computationally very efficient and obtains exactly divergence free velocity.
\end{remark}  

\section{Numerical tests}\label{sec:numer}

We present four tests to illustrate the behavior of the presented method. In the first test we study the convergence order in both space and time, under different boundary conditions (BCs) settings, considering both mildly and strongly distorted computational grids. The second test deals with the lid-driven cavity problem for several scenarios of the assigned BCs, where we compare our results with a solution provided in the recent literature. In the third test, we simulate Stokes flows in a confined channel with a variable geometry and compare the results with reference literature solutions obtained by applying the lubrication theory. In the last test, we investigate the capabilities of the proposed method in a ``real-world'' application with a strongly irregular computational domain.

The computational grids of the presented tests have been created by the open-source software Netgen \cite{Schberl1997NETGENAA}. An in-house code has been used to implement the method, and the open-source software Paraview \cite{Paraview} has been used to visualize the results.

\subsection{Test 1: convergence order in space and time} \label{test1}

We consider the square domain \(\left[-0.5, 0.5\right] \times \left[-0.5, 0.5\right]\) and the analytical solution
\begin{subequations} \label{eq:test1_analit}
\begin{align}
& u_x = -\cos(x) \sin(y) \sin(2  \pi  t)^2, \\
& u_y = -\sin(x) \cos(y) \sin(2  \pi  t)^2, \\
& \Psi = \frac{\cos(2 x) + \cos(2 y)}{4} \sin(2 \pi t)^2.
\end{align}
\end{subequations}

We consider the coarse mildly and badly distorted grids shown in \cref{fig_test1_mesh}, denoted by MDG and BDG, respectively. MDG has 85 vertices and 136 triangles, while BDG has 81 vertices and 128 triangles. For any triangle \(E\) of the grids, we consider the dimensionless quantity \(A_r\),
\begin{equation*}
A_r =\frac{h_{min}}{L_{max}} \frac{2}{\sqrt{3}},
\end{equation*}
where \(h_{min}\) and \(L_{max}\) are the minimum value of the heights and the maximum value of the lengths of \(E\), respectively, \(\frac{h_{min}}{L_{max}}\) is the aspect ratio of \(E\), and \(\frac{\sqrt{3}}{2}\) is the aspect ratio of the equilateral triangle. The number \(A_r\),
which is in the range \(\left[0, 1\right]\), represents the deviation of the aspect ratio of $E$ from the ideal value of an equilateral triangle. Dividing the interval \(\left[0, 1\right]\) into 19 sub-intervals with equal length, \cref{fig_test1_aspect_ratio} shows the number of triangles of MDG and BDG falling within each of the sub-intervals. As expected, for the MDG, the number of triangles is higher for the sub-ranges with higher values of \(A_r\), compared to the BDG. The minimum values of \(A_r\) are 0.36 and 0.21 for MDG and BDG, respectively.

\begin{figure}%[H]
	\centering
	\includegraphics[width=1\textwidth]{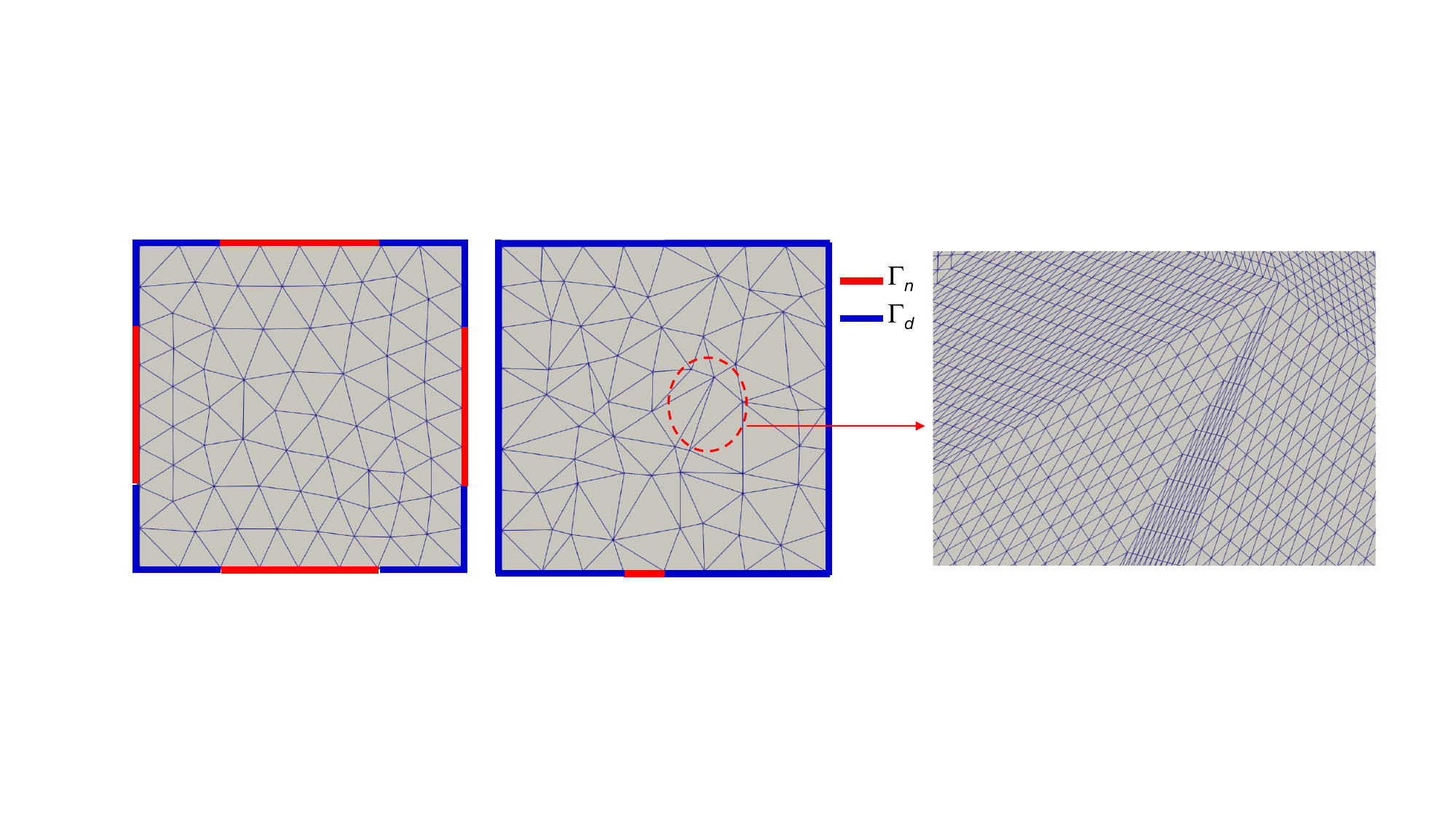}
	\caption{Test 1. Coarse computational grids and BCs: MDG and BCs scenario 1 (left), BDG and BCs scenario 2 (center). Right: Zoom of the BDG at the 5\(^{th}\) refinement level.}
	\label{fig_test1_mesh}
\end{figure}

\begin{figure}%[H]
	\centering
	\includegraphics[width=0.7\textwidth]{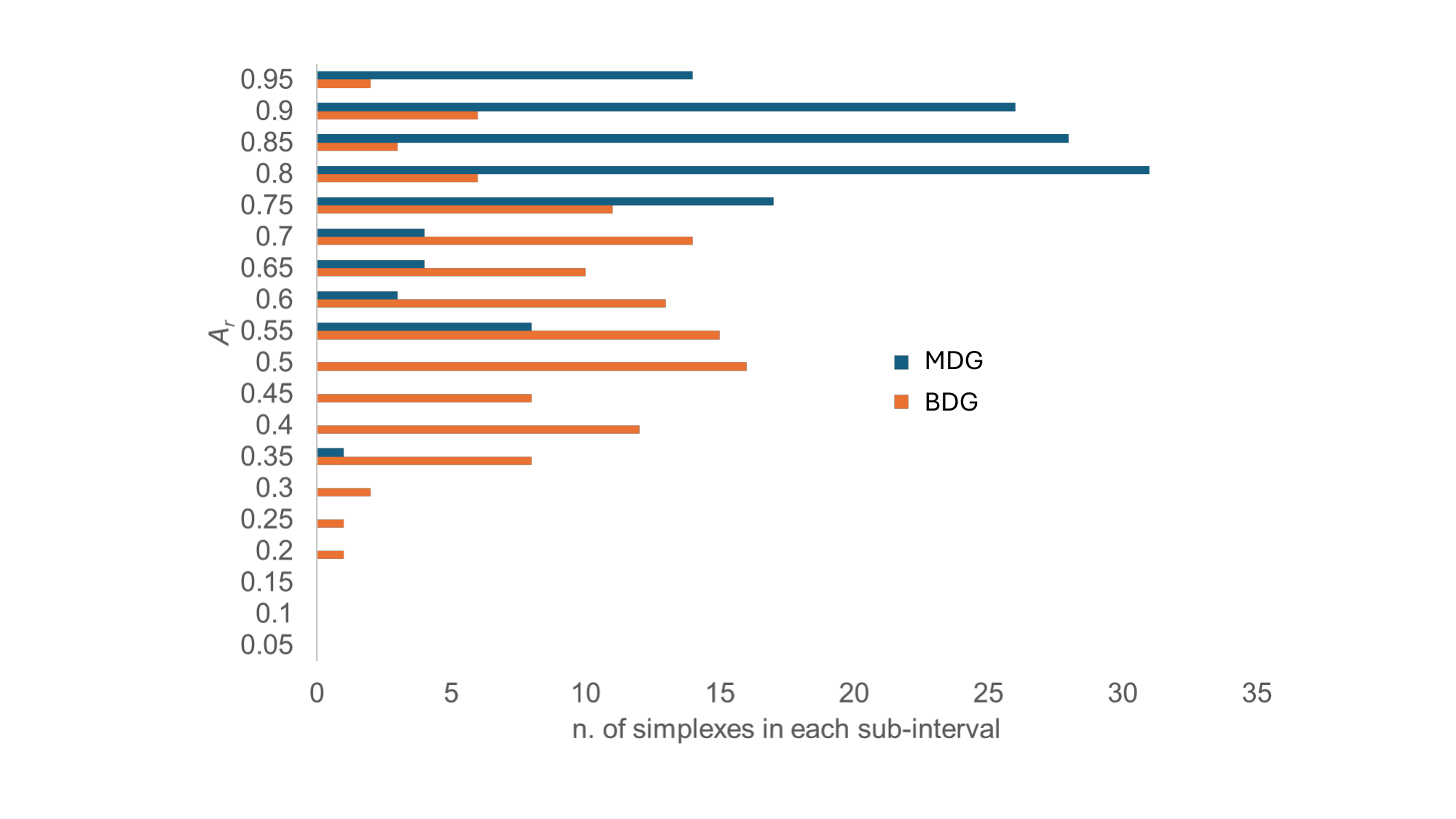}
	\caption{Test 1. Number of triangles of the coarse MDG and BDG falling in each sub-range of the interval \(\left[0, 1\right]\). }
	\label{fig_test1_aspect_ratio}
\end{figure}

Starting from the coarse MDG and BDG, we operate five refinements by halving each edge.
At each grid refinement, the number of triangles within each aspect ratio sub-interval in \cref{fig_test1_aspect_ratio} is multiplied by four. Below, the coarsest MDG and BDG are denoted as ``refinement level 0''.  \par

We consider two sets of BCs as depicted in \cref{fig_test1_mesh}. In the first case, there are alternating sections of \(\Gamma_d\) and \(\Gamma_n\), while in the second case only one boundary edge is on \(\Gamma_n\). Below, the two cases are denoted as ``BCs scenario 1'' and ``BCs scenario 2'', respectively. The second case is a way to implement full Dirichlet boundary condition, where the pressure is fixed at one edge to ensure uniqueness of the solution.\par

To analyze the convergence order in space, we choose a small time step size, \(\Delta t = 1\e{-4}\), and for each grid refinement level we compute the \(L_2\)-norms of the errors of the velocity components and kinematic pressure as
\begin{equation} \label{eq:L2_error}
  L_2^q = \max_{1\le n\le N}\|q(t_n) - q_h^n\|, \quad q=u_x, \, u_y, \, \Psi.
\end{equation}
Let \(h_l\) be the grid size of the \(l-\)th refinement level. If we assume that the error of variable \(q\) associated with the \(l-\)th grid refinement level is \(err_l^q \sim h_l^{r_h^q}\), the associated rate of convergence \(r_h^q\) is obtained by comparing the errors of the two consecutive refinement level grids with sizes \(h_l\) and \(h_{l+1}\):
\begin{equation*} %\label{eq:rc}
r_h^q = \frac{\log \left(\frac{err_l^q}{err_{l+1}^q}\right)}{\log\left(\frac{h_l}{h_{l+1}}\right)}.    
\end{equation*}
In \cref{error_MDG_1} -- \cref{error_BDG_2} we list the \(L_2\) norms of the errors and the associated convergence order. The values of the errors obtained for the MDG are similar for the two scenarios of BCs. As expected, \(L_2^{u_x}\) and \(L_2^{u_y}\) are slightly smaller for BCs scenario 2, where all but one of the boundary edges are on \(\Gamma_d\), while \(L_2^{\Psi}\) is slightly higher in BCs scenario 2 than in BCs scenario 1. The convergence order is 2 both all three variables, \(L_2^{\Psi}\) and \(L_2^{u_x}\) and \(L_2^{u_y}\), see \cref{error_MDG_1} -- \cref{error_MDG_2}. The errors on the BDG, reported in \cref{error_BDG_1} -- \cref{error_BDG_2}, are generally 2 orders of magnitude higher than on the MDG. The order of convergence for the pressure is 2 and it is just below 2 for the velocity components on the finest grid levels. 

To study the convergence order in time, we use the finest grid (5-th refinement level) and a time step size $\Delta t$ in the range \(\left[1\e{-1}, 1\e{-4}\right]\). Again, we compute the \(L_2\)-norms of the errors as in \eqref{eq:L2_error}, and the convergence rates as
\begin{equation*}
r_{\Delta t}^q = \frac{\log \left(\frac{err_l^q}{err_{l+1}^q}\right)}{\log\left(\frac{\Delta t_l}{\Delta t_{l+1}}\right)},
\end{equation*}
where \(\Delta t_l\) and \(\Delta t_{l+1}\) are two consecutive values of the time step size in the selected range.  \cref{error_time_MDG_1} -- \cref{error_time_BDG_2} show the computed errors and convergence rates. The convergence rate is 1 for all variables, with slightly higher errors on the MDG. These results are consistent with the theoretical first order convergence for the time discretization error established in \cref{thm:time-err}. In addition, the results from all tables illustrate the unconditional in time and space stability of the method established in \cref{thm:stab}, as well as its robustness for highly distorted grids.

\begin{table} [H]
  \footnotesize
	\caption{Test 1. \(L_2\)-norms of errors and convergence orders in space: MDG, BCs scenario 1.}
	\centering
	\begin{tabular}{c c c c c c c}
		%\hline
		ref. level & \(L_2^{\Psi}\) & $L_2^{u_x}$ & $L_2^{u_y}$ & \(r_h^{\Psi}\) & $r_h^{u_x}$ & $r_h^{u_y}$  \\
		\hline
		0 & 1.142E-02 &	2.996E-04 &	2.972E-04 & & & \\
	%	\hline			
		1& 2.865E-03 &	7.606E-05 &	7.582E-05 &	1.995E+00 &	1.978E+00 &	1.971E+00 \\
	%	\hline
		2 & 7.187E-04 &	1.923E-05 &	1.909E-05 &	1.995E+00 &	1.984E+00 &	1.990E+00 \\
	%	\hline
		3 & 1.796E-04 &	4.831E-06 &	4.802E-06 &	2.000E+00 &	1.993E+00 &	1.991E+00 \\
	%	\hline
		4 & 4.505E-05 &	1.212E-06 &	1.204E-06 &	1.995E+00 &	1.995E+00 &	1.995E+00 \\
	%	\hline
		5 &1.148E-05 &	3.106E-07 &	3.087E-07 &	1.972E+00 &	1.964E+00 &	1.964E+00 \\
		\hline		
	\end{tabular}
	\label{error_MDG_1}
\end{table}

\begin{table} [H] \footnotesize
	\caption{Test 1. \(L_2\)-norms of errors and convergence orders in space: MDG, BCs scenario 2.}
	\centering
	\begin{tabular}{c c c c c c c}
	%	\hline
		ref. level & \(L_2^{\Psi}\) & $L_2^{u_x}$ & $L_2^{u_y}$ & \(r_h^{\Psi}\) & $r_h^{u_x}$ & $r_h^{u_y}$ \\
		\hline
		0 & 1.188E-02&	2.402E-04 &	2.546E-04 & & & \\	
	%	\hline				
		1 & 2.981E-03 &	6.137E-05 &	6.518E-05 &	1.995E+00 &	1.970E+00 &	1.966E+00 \\
	%	\hline		
		2& 7.457E-04 &	1.547E-05 &	1.631E-05 &	1.999E+00 &	1.992E+00 &	1.998E+00 \\
	%	\hline		
		3& 1.861E-04 &	3.867E-06 &	4.092E-06 &	1.999E+00 &	1.994E+00 &	1.994E+00 	\\
	%	\hline		
		4& 4.659E-05 &	9.662E-07 &	1.027E-06 &	2.000E+00 &	2.000E+00 &	1.993E+00 \\
	%	\hline		
		5 &1.163E-05 &	2.476E-07 &	2.634E-07 &	1.999E+00 &	1.964E+00 &	1.963E+00 \\
		\hline		
	\end{tabular}
	\label{error_MDG_2}
\end{table}

\begin{table} [H] \footnotesize
	\caption{Test 1. \(L_2\)-norms of errors and convergence orders in space: BDG, BCs scenario 1.}
	\centering
	\begin{tabular}{c c c c c c c}
	%	\hline
		ref. level & \(L_2^{\Psi}\) & $L_2^{u_x}$ & $L_2^{u_y}$ & \(r_h^{\Psi}\) & $r_h^{u_x}$ & $r_h^{u_y}$   \\
		\hline
		0 & 1.758E+01 &	1.955E-01 &	1.939E-01 & & & \\	
	%	\hline		
		1 & 4.423E+00 &	2.126E-02 &	2.138E-02 &	1.990E+00 &	3.201E+00 &	3.181E+00 \\
	%	\hline
		2 & 1.107E+00 &	3.123E-03 &	3.122E-03 &	1.999E+00 &	2.767E+00 &	2.775E+00 \\
	%	\hline
		3 & 2.770E-01 &	8.013E-04 &	8.017E-04 &	1.998E+00 &	1.963E+00 &	1.962E+00 \\
	%	\hline
		4 & 6.544E-02 &	2.089E-04 &	2.098E-04 &	2.082E+00 &	1.939E+00 &	1.934E+00 \\
	%	\hline
		5 & 1.640E-02 &	5.586E-05 &	5.617E-05 &	1.997E+00 &	1.903E+00 &	1.901E+00 \\				
		\hline		
	\end{tabular}
	\label{error_BDG_1}
\end{table}

\begin{table} [H] \footnotesize
	\caption{Test 1. \(L_2\)-norms of errors and convergence orders in space: BDG, BCs scenario 2.}
	\centering
	\begin{tabular}{c c c c c c c}
	%	\hline
		ref. level & \(L_2^{\Psi}\) & $L_2^{u_x}$ & $L_2^{u_y}$ & \(r_h^{\Psi}\) & $r_h^{u_x}$ & $r_h^{u_y}$   \\
		\hline
		0 & 2.389E+01 &	2.021E-01 &	1.976E-01 & & & \\			
		1 & 6.709E+00 &	2.657E-02 &	2.609E-02 &	1.832E+00 &	2.927E+00 &	2.921E+00 \\
		2 & 1.489E+00 &	3.604E-03 &	3.477E-03 &	2.171E+00 &	2.882E+00 &	2.908E+00 \\
		3 & 3.765E-01 &	9.022E-04 &	8.801E-04 &	1.984E+00 &	1.998E+00 &	1.982E+00 \\
		4 & 9.428E-02 &	2.350E-04 &	2.291E-04 &	1.998E+00 &	1.941E+00 &	1.941E+00 \\
		5 & 2.363E-02 &	6.251E-05 &	6.099E-05 &	1.996E+00 &	1.910E+00 &	1.910E+00 \\
		\hline		
	\end{tabular}
	\label{error_BDG_2}
\end{table}

\begin{table} [H] \footnotesize
	\caption{Test 1. \(L_2\)-norms of errors and convergence order in time: MDG, BCs scenario 1.}
	\centering
	\begin{tabular}{c c c c c c c}
	%	\hline
		\(\Delta t\) & \(L_2^\Psi\) & $L_2^{u_x}$ & $L_2^{u_y}$ & \(r_{\Delta t}^\Psi\) & $r_{\Delta t}^{u_x}$ & $r_{\Delta t}^{u_y}$  \\
		\hline
		1.E-01 & 1.117E-02 & 3.081E-04 &	3.017E-04 & & &			\\
	%	\hline
		1.E-02 & 1.130E-03 & 3.088E-05 &	3.053E-05 &	9.95E-01 &	9.99E-01 &	9.95E-01 \\
	%	\hline
		1.E-03 & 1.140E-04 & 3.095E-06 &	3.053E-06 &	9.96E-01 &	9.99E-01 &	1.00E+00 \\
	%	\hline
		1.E-04 & 1.148E-05 & 3.106E-07 &	3.087E-07 &	9.97E-01 &	9.98E-01 &	9.95E-01 \\		
		\hline		
	\end{tabular}
	\label{error_time_MDG_1}
\end{table}

\begin{table} [H] \footnotesize
	\caption{Test 1. \(L_2\)-norms of errors and convergence orders in time: MDG, BCs scenario 2.}
	\centering
	\begin{tabular}{c c c c c c c}
	  %	\hline
          \(\Delta t\) & \(L_2^\Psi\) & $L_2^{u_x}$ & $L_2^{u_y}$ & \(r_{\Delta t}^\Psi\) & $r_{\Delta t}^{u_x}$ & $r_{\Delta t}^{u_y}$  \\
		\hline
		1.E-01 &	1.156E-02 &	2.436E-04 &	2.603E-04& & & \\
	%	\hline			
		1.E-02 &	1.158E-03 &	2.436E-05 &	2.609E-05 &	9.991E-01 &	1.000E+00 &	9.990E-01 \\
	%	\hline
		1.E-03 &	1.162E-04 &	2.460E-06 &	2.603E-06 &	9.986E-01 &	9.956E-01 &	1.001E+00 \\
	%	\hline
		1.E-04 &	1.165E-05 &	2.477E-07 &	2.634E-07 &	9.988E-01 &	9.971E-01 &	9.948E-01 \\				
		\hline		
	\end{tabular}
	\label{error_time_MDG_2}
\end{table}

\begin{table} [H] \footnotesize
	\caption{Test 1. \(L_2\)-norms of errors and convergence orders in time: BDG, BCs scenario 1.}
	\centering
	\begin{tabular}{c c c c c c c}
	  \hline
\(\Delta t\) & \(L_2^\Psi\) & $L_2^{u_x}$ & $L_2^{u_y}$ & \(r_{\Delta t}^\Psi\) & $r_{\Delta t}^{u_x}$ & $r_{\Delta t}^{u_y}$  \\
		\hline
		1.E-01 &	1.588E+01&	5.444E-02&	5.461E-02 & & & \\			
		1.E-02 &	1.591E+00&	5.456E-03&	5.471E-03&	9.991E-01&	9.990E-01&	9.992E-01\\
		1.E-03 &	1.612E-01&	5.524E-04&	5.544E-04&	9.943E-01&	9.947E-01&	9.942E-01\\
		1.E-04 &	1.640E-02&	5.586E-05&	5.617E-05&	9.926E-01&	9.952E-01&	9.943E-01\\		
		\hline		
	\end{tabular}
	\label{error_time_BDG_1}
\end{table}

\begin{table} [H] \footnotesize
	\caption{Test 1. \(L_2\)-norms of errors and convergence orders in time: BDG, BCs scenario 2.}
	\centering
	\begin{tabular}{c c c c c c c}
	  %	\hline
\(\Delta t\) & \(L_2^\Psi\) & $L_2^{u_x}$ & $L_2^{u_y}$ & \(r_{\Delta t}^\Psi\) & $r_{\Delta t}^{u_x}$ & $r_{\Delta t}^{u_y}$  \\
		\hline
		1.E-01 &	2.288E+01&	6.106E-02&	5.938E-02 & & & \\			
		1.E-02&	2.294E+00 &	6.119E-03&	5.947E-03 &	9.989E-01 &	9.991E-01 &	9.993E-01 \\
		1.E-03&	2.323E-01&	6.187E-04&	6.025E-04 &	9.944E-01 &	9.951E-01 &	9.943E-01 \\
		1.E-04&	2.363E-02&	6.251E-05&	6.099E-05 &	9.926E-01 &	9.956E-01 &	9.947E-01 \\    	
		\hline		
	\end{tabular}
	\label{error_time_BDG_2}
\end{table}

\subsection{Test 2: lid driven cavity}
Stokes flow in closed rectangular cavities is a useful approximation for many applications, such as ceramics casting, polymer processing, roll coating, etc., see, e.g., \cite{MIKHAYLENKO2018103, GUMGUM201765}. We consider the classical lid-driven cavity test case in a square cavity \(\left[0, 1\right] \times \left[0, 1\right]\), as sketched in \cref{fig_test2_sketch}. The initial conditions (ICs) are zero velocity and pressure, while the BCs are assigned as in \cref{fig_test2_sketch}, where the upper wall moves along the \(x-\)direction with assigned velocity \(u_0\) and \(s\) is the horizontal velocity component assigned at the bottom wall. We consider the cases \(s = u_0\), \(s = 0\), and \(s = -u_0\). The Reynolds number \(Re=\frac{u_0 \ L}{\nu}\) is set to 1, where \(L\) is the size of the cavity. The computational grids are the same mildly and badly distorted ones (MDG and BDG) used in Test 1, presented in \cref{test1}. A pressure BC $\Psi = 0$ is assumed at the bottom-left corner, to fix the pressure values. \par

We present dimensionless results, dividing the spatial \(x\) and \(y\) coordinates by the unitary cavity length, the velocity components and velocity magnitude by \(u_0\), and the kinematic pressure by \(u_0^2\). \par

In \cref{fig_test2_plots} we plot the solution computed on the 2\(^{nd}\) refinement grid level of the BDG assuming \(s=0\) (fixed bottom wall). Very similar results have been obtained with the MDG and for brevity are not shown. The results are qualitatively in very good agreement with the literature, see \cite[Figure 10 (left)]{ARAYA201729} and \cite[Figure 3.8 a,b]{MIKHAYLENKO2018103}. In \cite{ARAYA201729}, the authors use a multiscale hybrid-mixed method, while in \cite{MIKHAYLENKO2018103}, the author presents a regularized solution of the Stokes equations, where a cut-off blob function replaces the classical delta-Dirac one. In \cref{fig_test2data} we plot the computed profiles with both MDG and BDG of the velocity components and pressure along the vertical and horizontal mid-lines of the domain, in the case of \(s=0\). The grid distortion does not seem to affect the results. The \(u_x\) profiles along the vertical mid-line match very well the solution provided in \cite{MIKHAYLENKO2018103}. \par

The velocity streamlines, computed over the \(2^{nd}\) refinement of the BDG, are in very good agreement with the solution provided in \cite{GUMGUM201765}, for the cases of \(s=u_x\) and \(s=-u_x\),  as shown in \cref{fig_test2_plot_velo_bottom}. In \cite{GUMGUM201765}, the authors solve the steady part of the Stokes flows problem using boundary element methods, while the time derivative is approximated by the finite difference method. The streamlines on the MDG and finer grids are very similar, and for brevity are not shown. Both velocity components computed along the vertical and horizontal mid-lines in the cases of \(s=u_x\) and \(s=-u_x\) fit very well the results provided in \cite{GUMGUM201765}, see \cref{fig_test2_data_bottom_velo} and \cref{fig_test2_data_bottom_reverse_velo}.

Overall, we conclude that the results from the presented method for this benchmark problem match very well literature solutions. Moreover the results are robust with respect to grid distortion.

\begin{figure}%[H]
	\centering
	\includegraphics[width=0.3\textwidth]{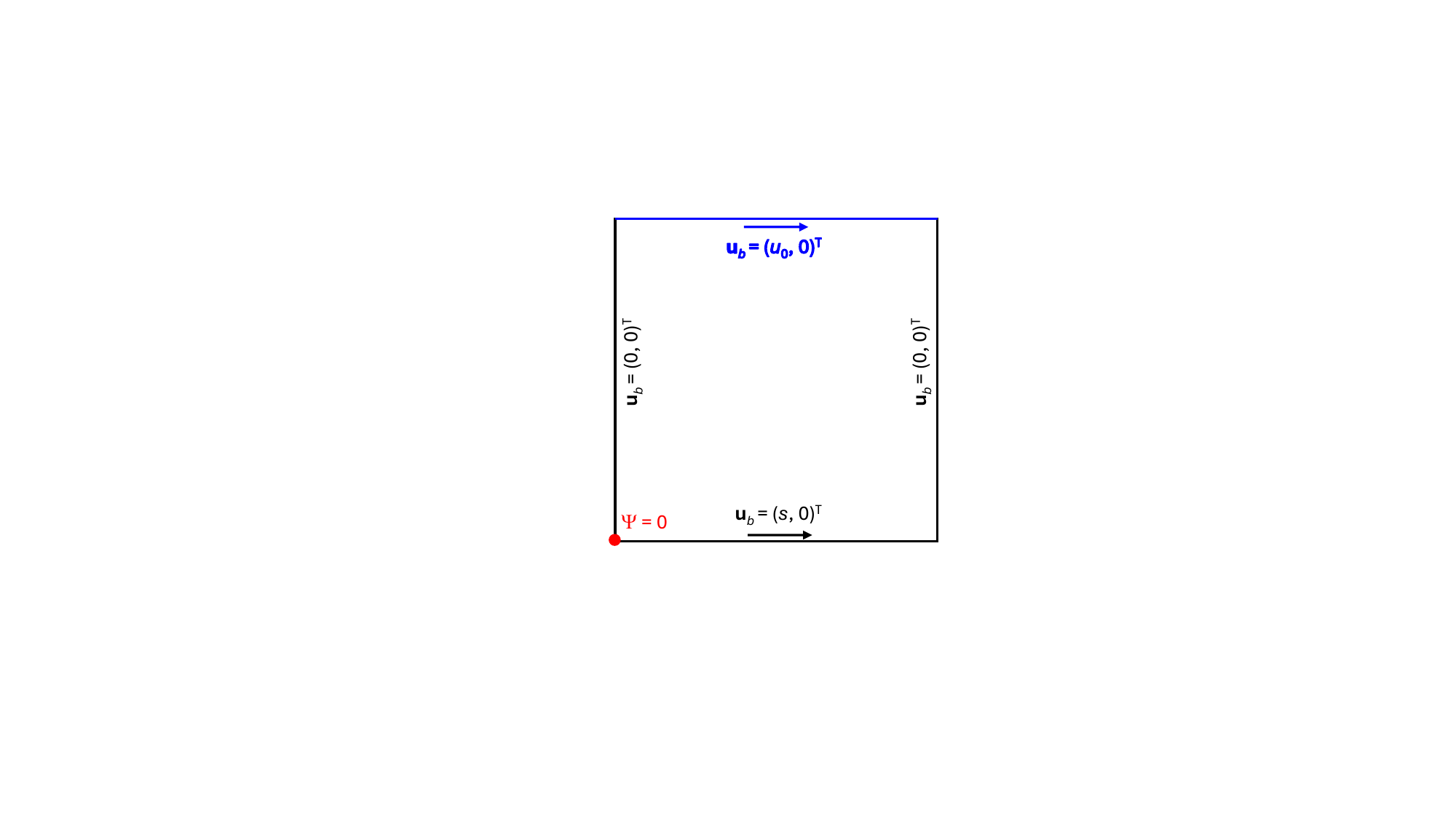}
	\caption{Test 2. Sketch of the domain and assigned BCs.}
	\label{fig_test2_sketch}
\end{figure}

\begin{figure}%[H]
	\centering
	\includegraphics[width=1\textwidth]{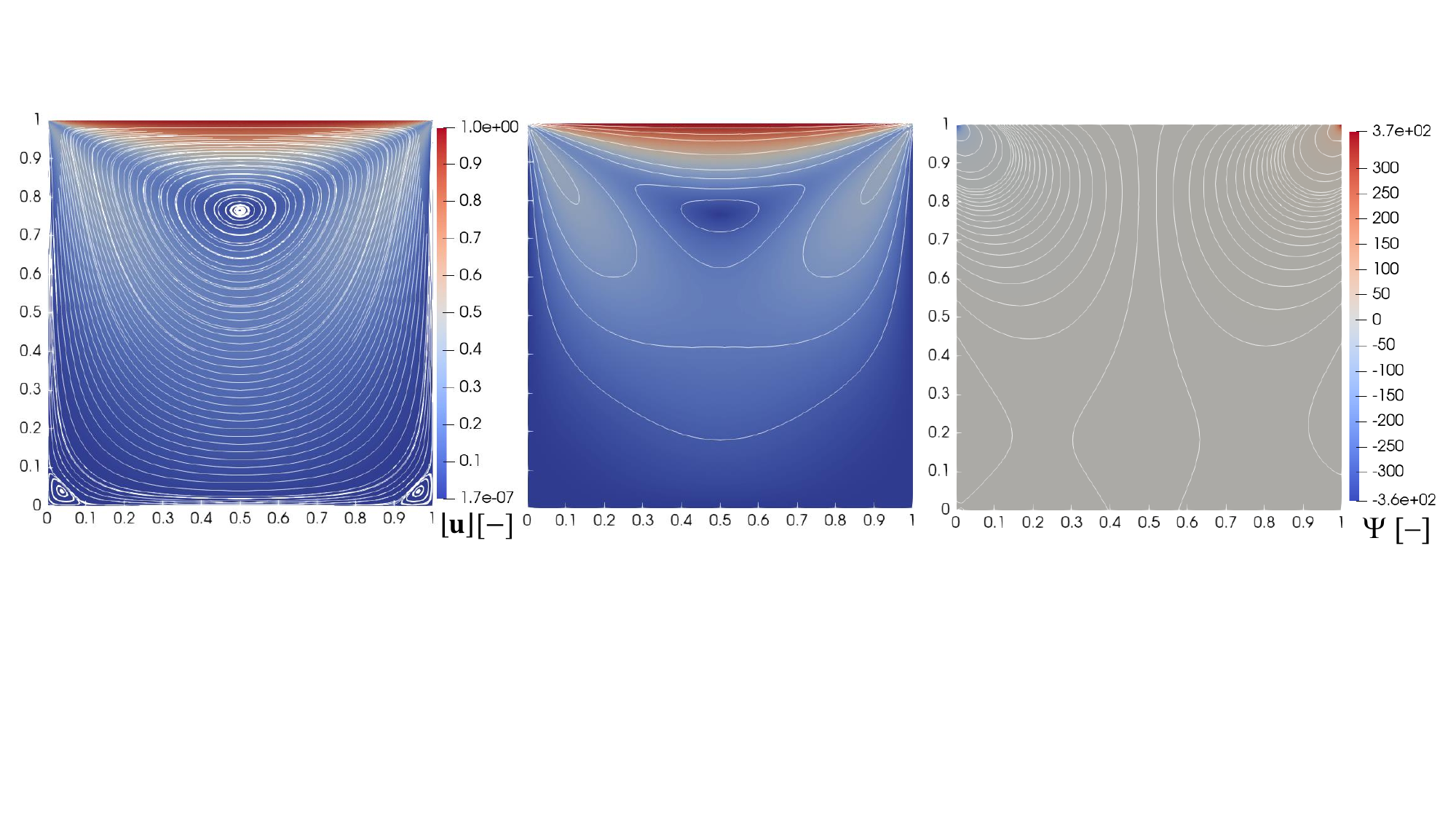}
	\caption{Test 2. Computed solution on the \(2^{nd}\) refinement level of the BDG. Left: velocity field. Center: streamline contours. Right: pressure field.}
	\label{fig_test2_plots}
\end{figure}

\begin{figure}%[H]
\centering
\includegraphics[width=0.8\textwidth]{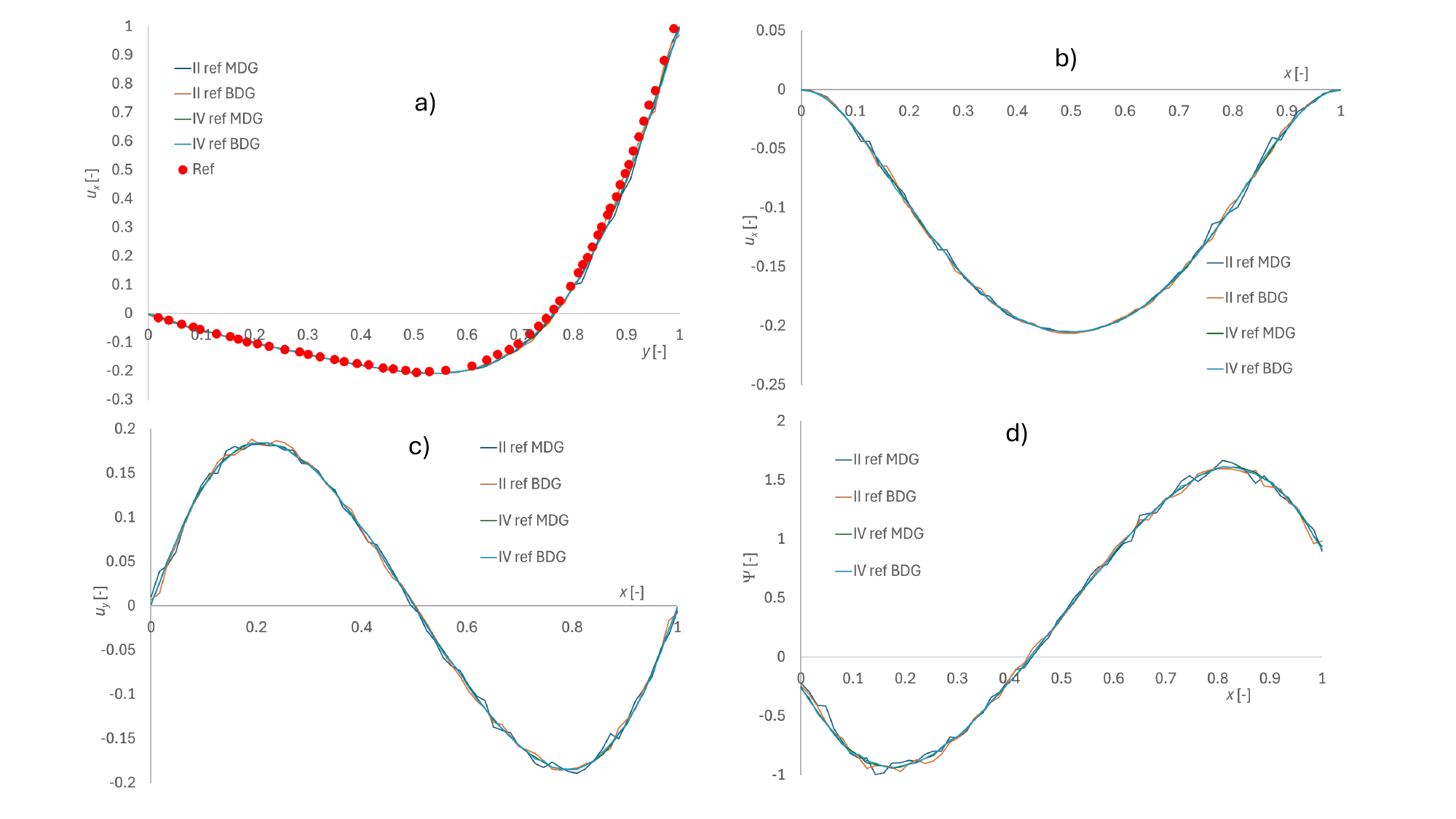}
\caption{Test 2. Computed solution on the \(2^{nd}\) and \(4^{th}\) refinements of the MDG and BDG: a) profile of \(u_x\) along the vertical mid-line, b) profile of \(u_x\) along the horizontal mid-line, c) profile of \(u_y\) along the horizontal mid-line, d) profile of \(\Psi\) along the horizontal mid-line. ``Ref.'' marks the solution provided in \cite{MIKHAYLENKO2018103}.}
	\label{fig_test2data}
\end{figure}

\begin{figure}%[H]
	\centering
	\includegraphics[width=0.75\textwidth]{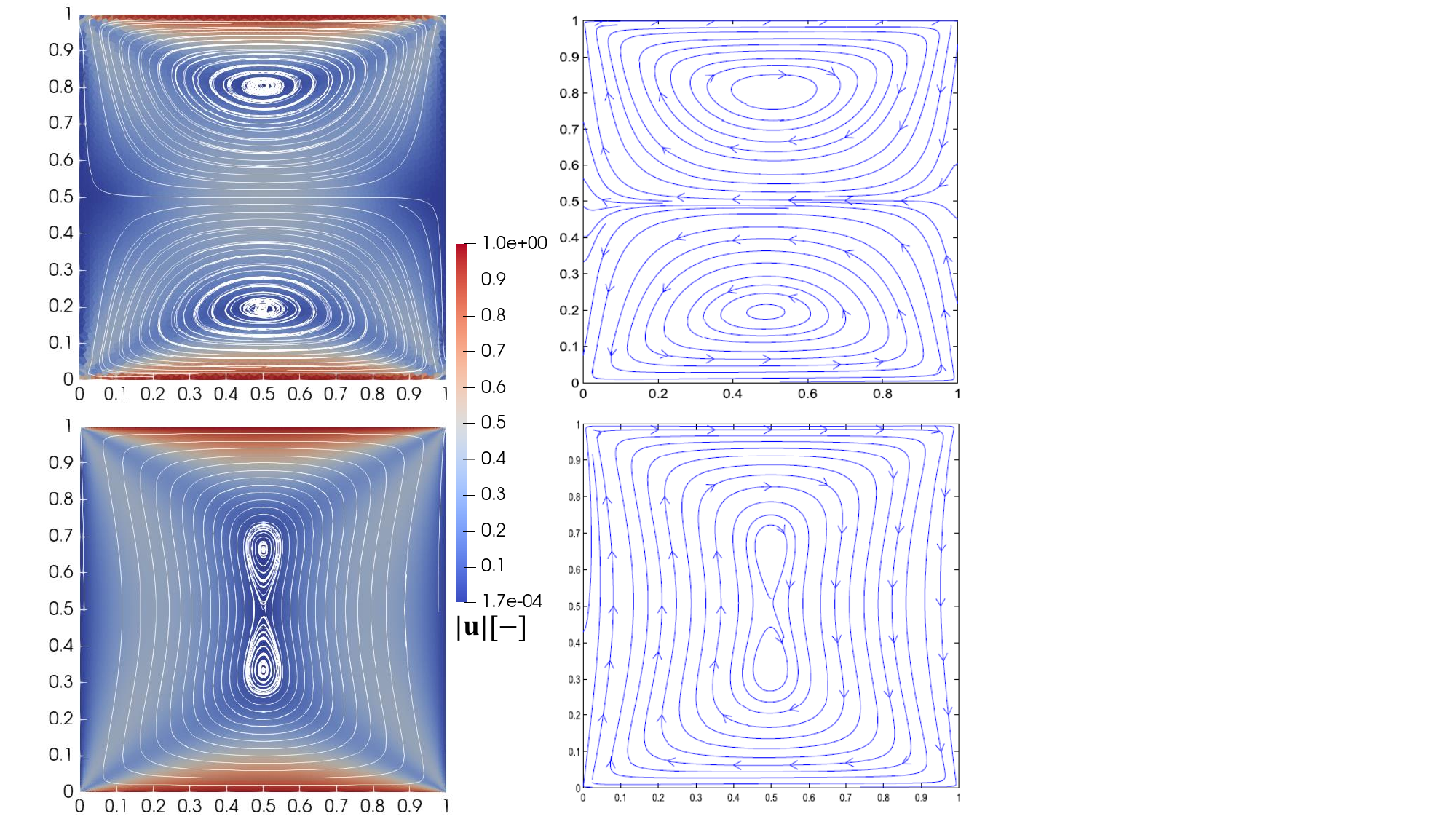}
	\caption{Test 2. Computed velocity streamlines on the \(2^{nd}\) refinement of BDG. Top row: case \(s=u_{0,x}\); left: our solution, right: literature results in \cite{GUMGUM201765}. Bottom row: case \(s=-u_{0,x}\); left: our solution, right: literature results in \cite{GUMGUM201765}. }
	\label{fig_test2_plot_velo_bottom}
\end{figure}

\begin{figure}%[H]
	\centering
	\includegraphics[width=0.8\textwidth]{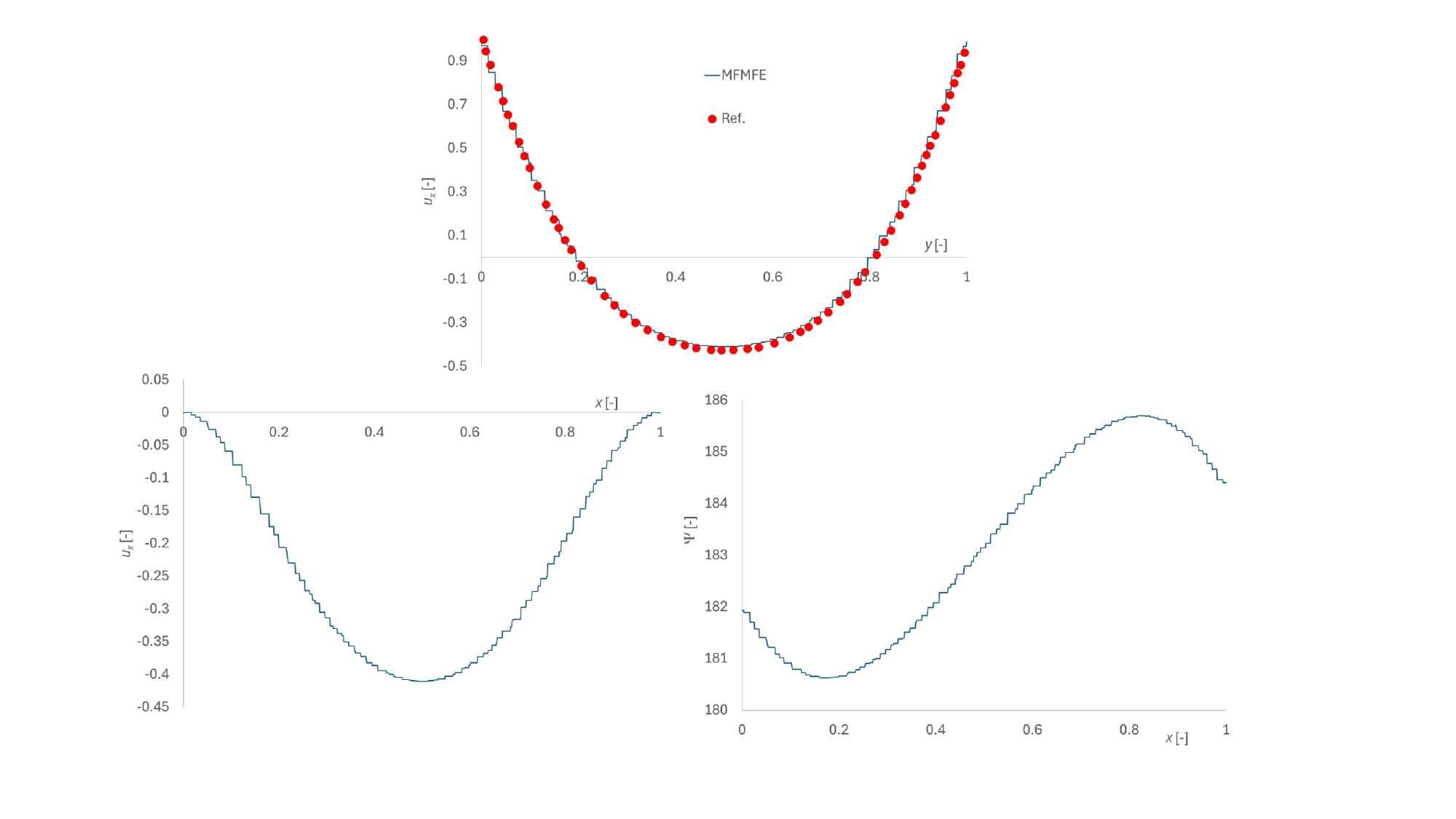}
	\caption{Test 2. Computed profiles of the velocity components and pressure on the \(2^{nd}\) refinement of BDG, case \(s=u_{0,x}\). Top: \(u_x\) along the vertical mid-line with ``Ref.'' showing the results in \cite{GUMGUM201765}. Bottom row: data along the horizontal mid-line; left: \(u_x\), right: \(\Psi\). }
	\label{fig_test2_data_bottom_velo}
\end{figure}

\begin{figure}%[H]
	\centering
	\includegraphics[width=0.8\textwidth]{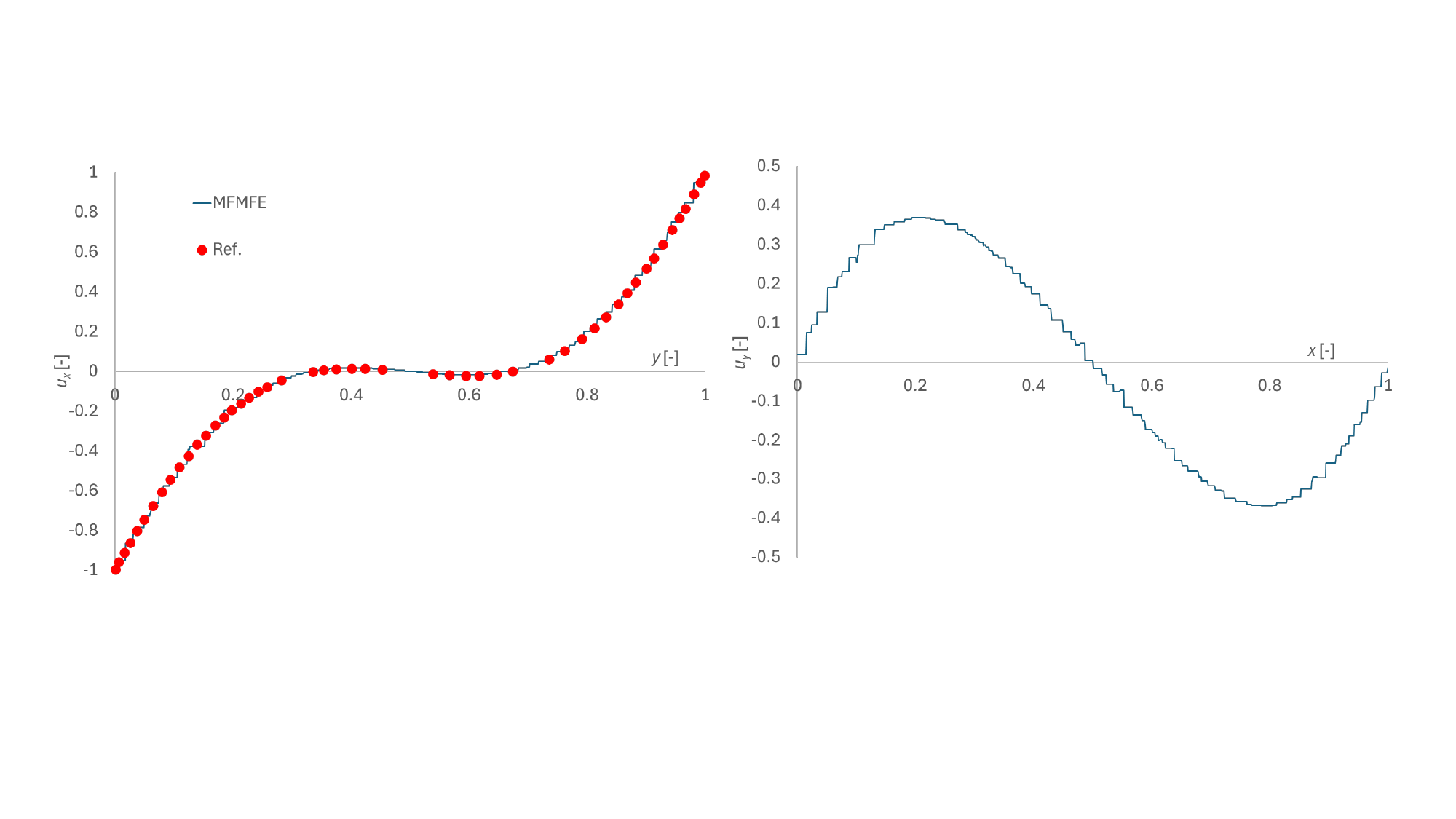}
	\caption{Test 2. Computed profiles of the velocity components and pressure over the \(2^{nd}\) refinement BDG, case \(s=-u_{0,x}\). Left: \(u_x\) along the vertical mid-line with ``Ref.'' showing the results in \cite{GUMGUM201765}. Right: \(u_y\) along the horizontal mid-line.}
	\label{fig_test2_data_bottom_reverse_velo}
\end{figure}

\subsection{Test 3: Stokes flow in a channel with smoothly changing width}

Stokes flows in confined channel with variable geometries is encountered in many industrial, biomedical, and biological systems, such as microfluidics, flow driven by contracting channel walls, lab-chip technologies, drug carrying micro-devices, etc., see \cite{doi:10.1098/rspa.2017.0234}. We consider the channel in \cref{fig_test3_geo_BCs}, with characteristic length and depth \(L_0\) and \(h_0\), respectively, and set \(\delta = \frac{h_0}{L_0}\). The ICs are zero velocity and pressure, and the BCs are assigned as in \cref{fig_test3_geo_BCs}, with parabolic Poiseuille inflow flux \(q\), no-slip BCs on the upper and lower channel walls, and zero pressure at the downstream channel end. Following \cite{doi:10.1098/rspa.2017.0234}, we introduce the dimensionless variables
\begin{equation}
X=\frac{x}{L_0}, \quad Y=\frac{y}{h_0},\quad H=\frac{h}{h_0}, \quad U_x = \frac{u_x}{q/h_0}, \quad U_y = \frac{u_y}{q/h_0}, \quad \hat{\Psi} = \frac{\Psi}{\left(q/h_0\right)^2},
\end{equation}	
and set the scale for time to \(\frac{q}{h_0^2}\). The equation for the (dimensionless) restriction of the channel width between \(X=-1\) and \(X = 1\) is \cite{doi:10.1098/rspa.2017.0234, doi:10.1137/23M1576876}
\begin{equation}
H(X) = 1 - \frac{\lambda}{2} \left(1+\cos(\pi X)\right), \quad 0 \le \lambda < 1,
\end{equation}
where \(\lambda\) is the maximum (dimensionless) width contraction. The (dimensionless) inflow Poiseuille velocity profile is
\begin{equation}
\mathbf{U}\left(Y\right)=\left(6\left(Y - Y^2\right),0\right)^T.
\end{equation}
In the presented simulations we set \(\delta = 1\) and \(\lambda=0.2, \ 0.5, \ 0.8\). The Reynolds number \(Re\), computed according to the dimensionless inflow channel depth and the maximum inflow velocity, is 1.5. We run our simulations over a set of coarse and fine grids, whose number of simplices \(N_T\) and vertices \(N\) are listed in \cref{test3_info_grids}. The dimensionless time step size is \(\frac{\Delta t}{u_x/h^2_0} = 1\e{-2} \). We also ran other simulations using smaller and larger time step sizes, but the results did not change significantly.

\begin{table}[ht] \footnotesize
\caption{Test 3. Number of triangles and vertices for the coarse and fine grids.}
\centering 
\begin{tabular}{c c c c c} 
	%	\hline
	& \multicolumn{2}{c}{coarse grids}  & \multicolumn{2}{c}{fine grids}   \\
	\hline
	\(\lambda\) & \(N_T\) & \(N\) & \(N_T\) & \(N\) \\
%	\hline
	0.2 & 17306 & 9149 & 64079 & 32705 \\
%	\hline
	0.5 & 16257 & 8624 & 61429 & 31382 \\
%	\hline
	0.8 & 15648 & 8320 & 57090 & 29212 \\
	\hline
\end{tabular}
\label{test3_info_grids}
\end{table}

In \cref{fig_test3_plots} we show the (dimensionless) computed velocity and pressure fields with \(\lambda = 0.8\). In \cite{doi:10.1098/rspa.2017.0234}, the same problem is solved by the lubrication theory, considering the higher-order terms coming from the perturbation theory, accounting for values of \(\delta \simeq 1\), see also \cite{doi:10.1137/23M1576876}. The authors of \cite{doi:10.1098/rspa.2017.0234} provide an analytical expression of the pressure drop \(\Delta \hat{\Psi}\) from \(X=-1\) to \(X=1\) as function of the restriction parameter \(\lambda\), shown in \cref{fig_test3_plot_delta_press}, and confirmed by the experimental results in \cite{doi:10.1098/rspa.2017.0234}. The scatter between the analytical data given by the  lubrication theory and the experiments can be justified by the 3D flow effects not accounted in the 2D lubrication theory. These 3D effects generally tend to disappear as the value of \(\lambda\) increases and the channel becomes narrower. In the same \cref{fig_test3_plot_delta_press} we also plot the dimensionless pressure drops computed by the present method on the coarse and fine grids. Our results fit very well the reference data for the three investigated values of \(\lambda\), and the mesh size does not affect significantly the pressure drop.

\begin{figure}%[H]
\centering
\includegraphics[width=0.8\textwidth]{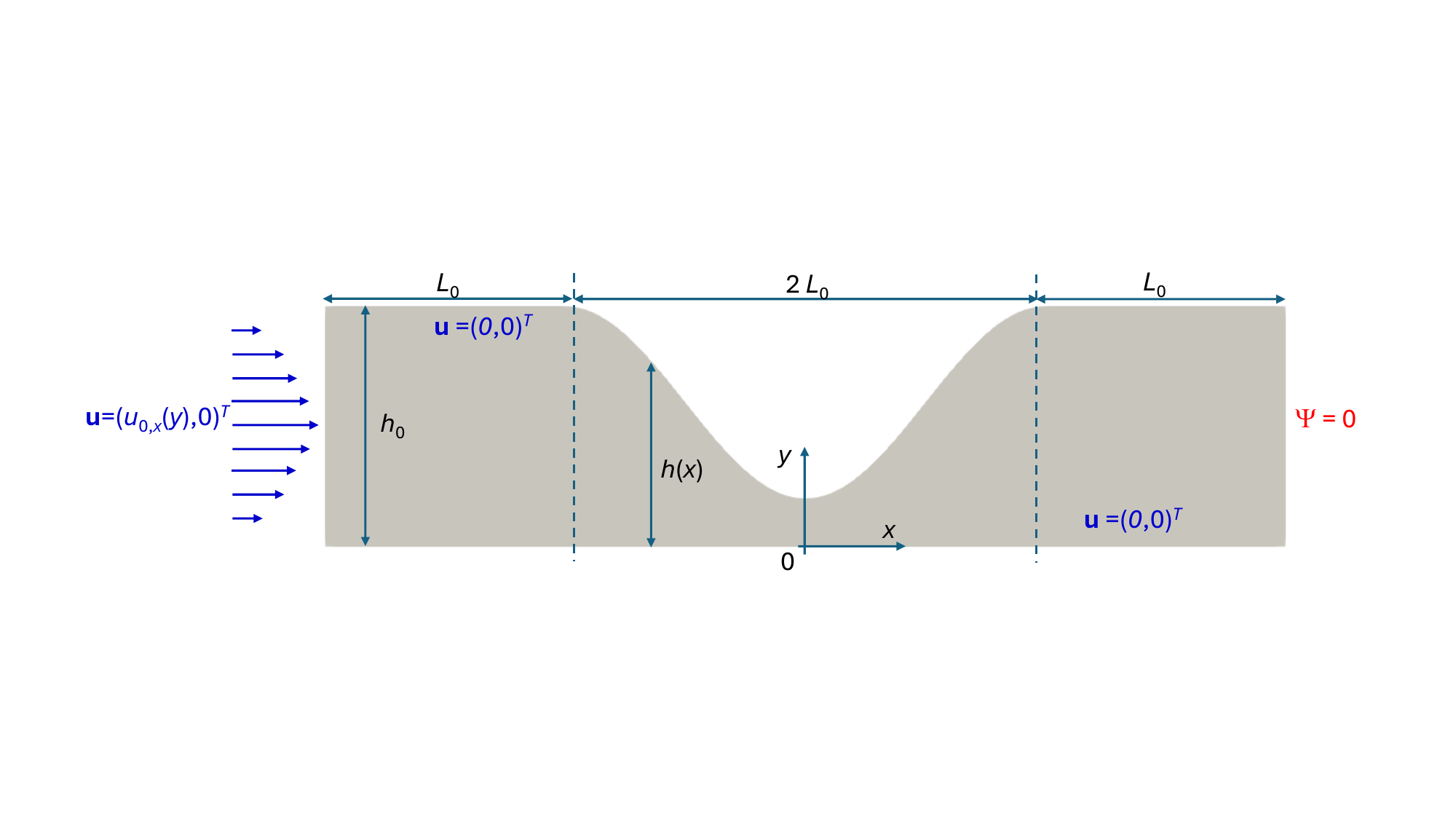}
\caption{Test 3. Channel geometry and assigned BCs.}
\label{fig_test3_geo_BCs}
\end{figure}

\begin{figure}%[H]
\centering
\includegraphics[width=0.85\textwidth]{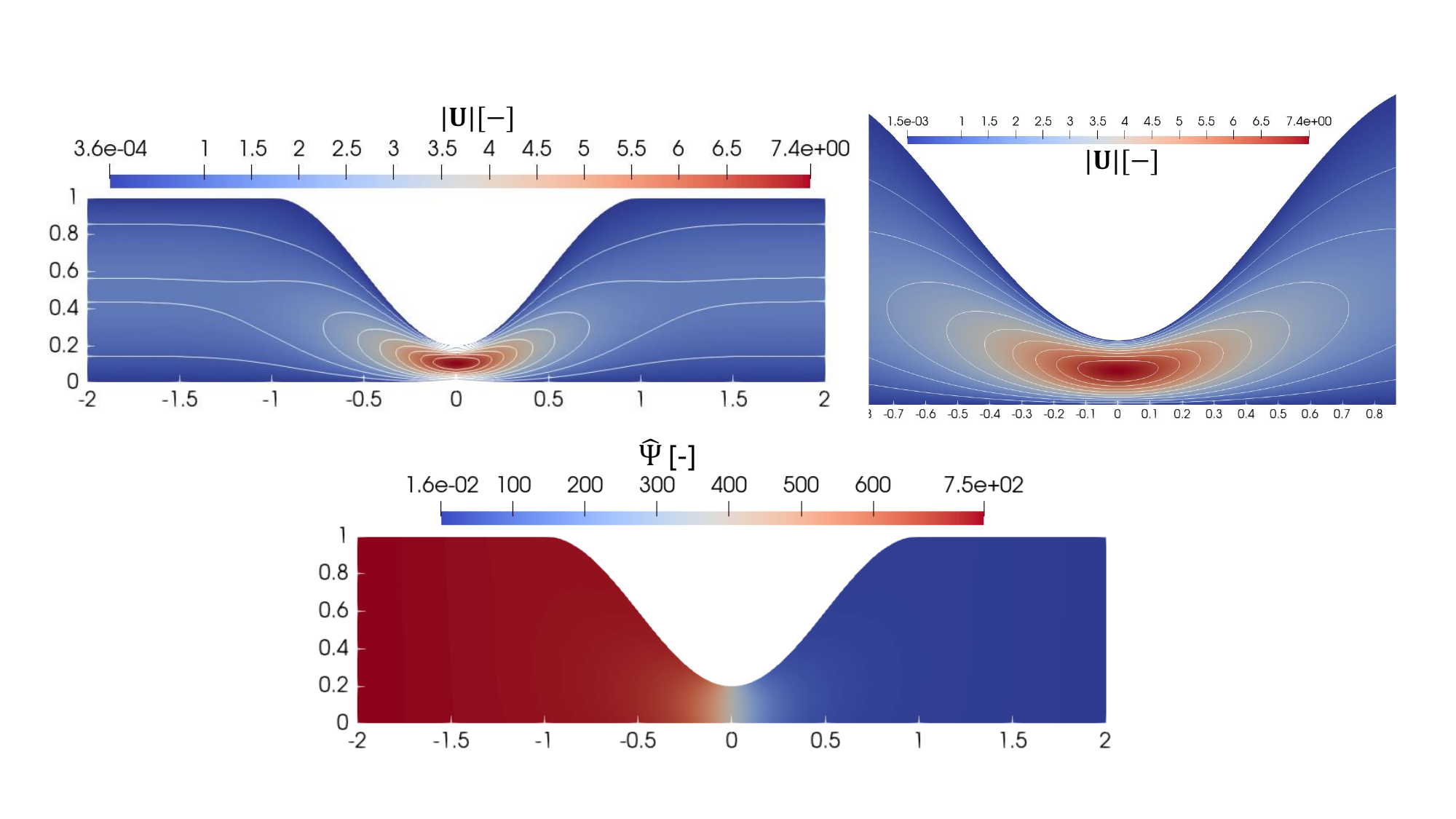}
\caption{Test 3. Computed (dimensionless) solution for \(\lambda = 0.8\). Top row: velocity field and velocity magnitude contours (left) and zoom in the restriction area (right). Bottom row: computed pressure field.}
\label{fig_test3_plots}
\end{figure}

\begin{figure}%[H]
\centering
\includegraphics[width=0.55\textwidth]{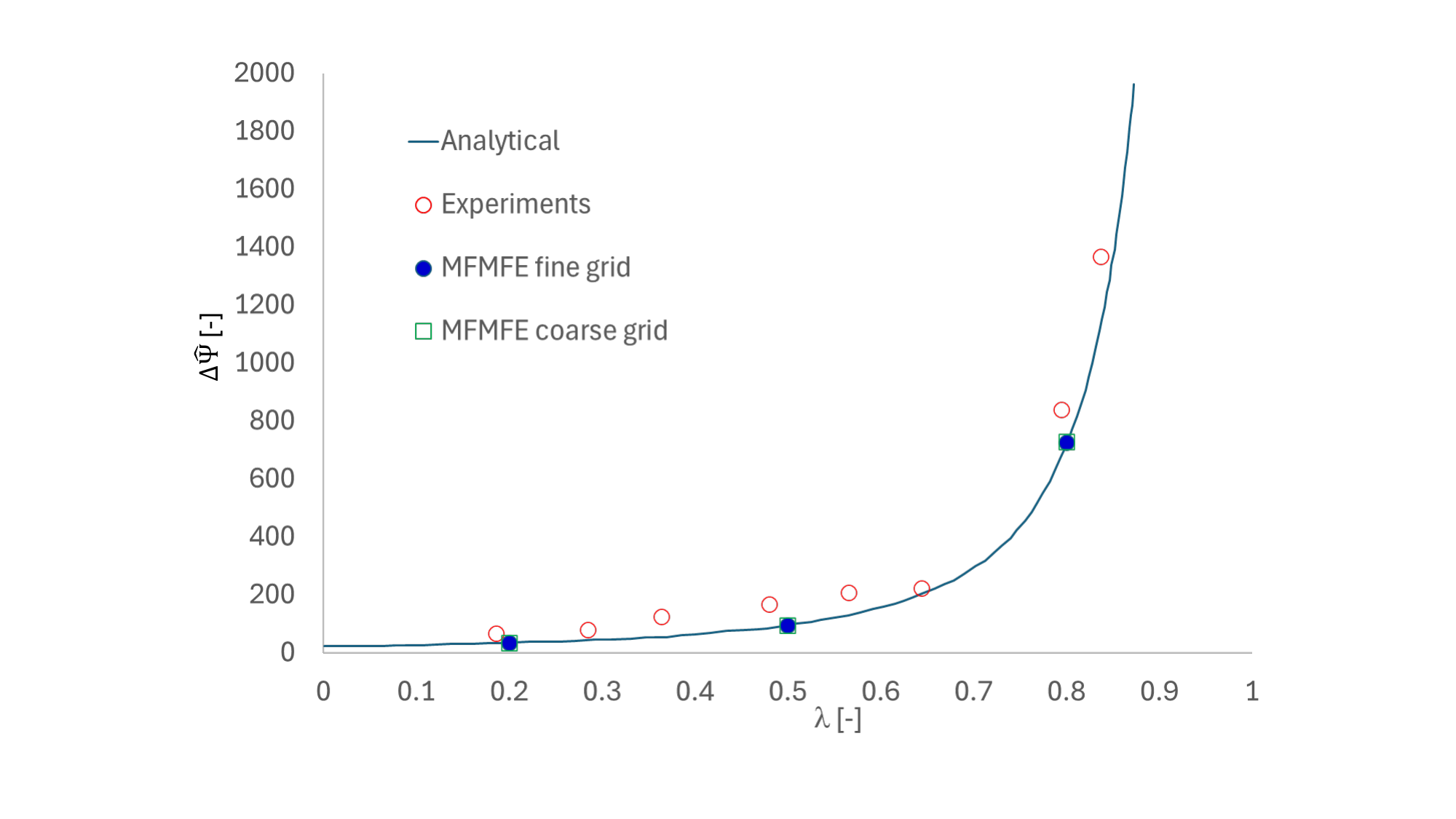}
\caption{Test 3. Computed (dimensionless) pressure drop from \(X=-1\) to \(X=1\) for different values of \(\lambda\). Nomenclature. ``Analytical'': data from the lubrication theory including the higher order terms \cite{doi:10.1098/rspa.2017.0234}. ``Experiments'': experimental data provided in \cite{doi:10.1098/rspa.2017.0234}. ``MFMFE fine grids'' and ``MFMFE coarse grids'': results from the present method.}
\label{fig_test3_plot_delta_press}
\end{figure}

\subsection{Test 4: pulsatile Stokes flow within a highly irregular domain} 

The purpose of the test in this section is to investigate the capability of the proposed method to simulate pulsatile Stokes flows within very irregular domains, which find several industrial and biomedical applications, e.g., hydrodynamics in micro and mini coiling systems, vascular hemodynamics in small vessels, and peristaltic flows. We consider flow in a duct with a central elliptic expansion and a wavy-shaped inner solid obstacle with mean radius \(R_i\), see 
the geometry in \cref{fig_test4_geo_BCs}, where the equation of the elliptic-shape expansion of the duct is also shown. We assign a pulsatile inflow Poiseuille velocity profile at the upstream end, pressure condition \(\Psi = 0\) at the downstream end, and zero velocity on the remaining boundaries of the external duct and on the surface of the inner obstacle. We non-dimentionalize the problem considering the scales for lengths (\(L_0\)), velocity (\(U_0\)), kinematic pressure (\(\Psi_0\)) and time (\(T_0\)),
\begin{equation}
L_0 = D = 2 \ R, \quad U_0 = \hat{U}_{max}, \quad \Psi_0 = U_{max}^2, \quad T_0 = \frac{D}{ U_{max}},
\end{equation}
where \(\hat{U}_{max}\) is the maximum value of the Poiseuille assigned inflow velocity profile and \(D = 2 \ R\) is the diameter of the duct at the upstream end. The significant geometrical and kinematic variables are listed in \cref{table_test4_geo}. The wavy profile of the inner obstacle is similar to the one used in test 4 in \cite{Arico-thermal},
\begin{equation}
x=r_i \sin \phi, \quad y=r_i \cos \phi, \quad r_i = R_i + a \cos n\phi, \quad 0 \le \phi \le 2 \pi,
\end{equation} 
with \(a\) and \(n\) listed in \cref{table_test4_geo}. The (dimensionless) inflow pulsatile Poiseuille velocity is
\begin{equation}
u_x\left(r,t\right) = U_{max}\left(t\right) \frac{\left(R^2 - d_r^2\right)}{R^2}, \quad U_{max}\left(t\right)= \hat{U}_{max} \cos \left(\frac{2}{t_0} \pi t\right), \quad u_y=0, \quad \forall t \ge 0,
\end{equation}
where \(d_r\) is the distance of any point \(r\) of the inflow section from its center, \(t_0\) is the period of the pulsation, and the other symbols are specified in \cref{fig_test4_geo_BCs}. \par

\begin{figure}%[H]
\centering
\includegraphics[width=.9\textwidth]{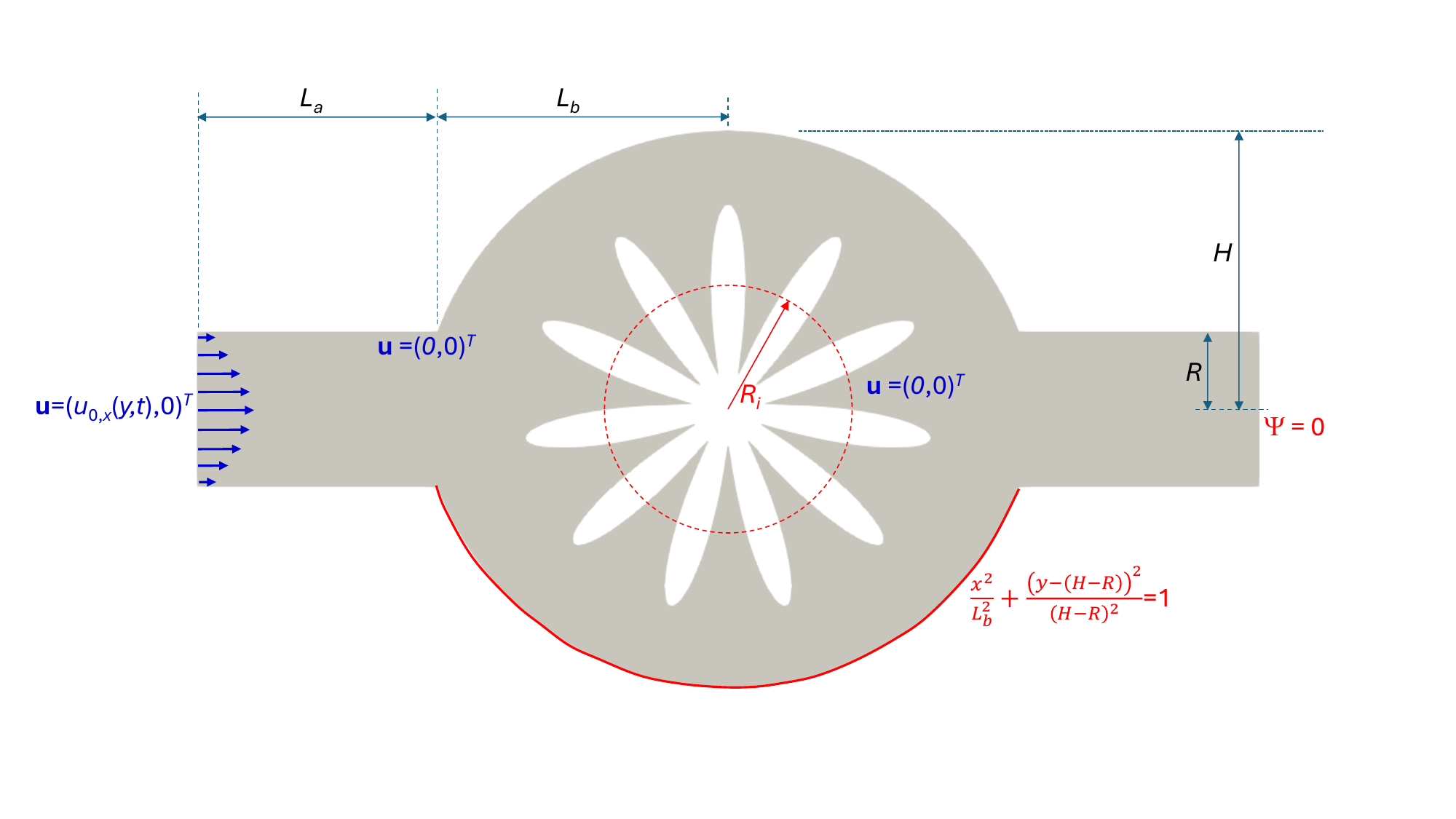}
\caption{Test 4. Channel geometry and assigned BCs. Values of the variables are listed in \cref{table_test4_geo}.}
\label{fig_test4_geo_BCs}
\end{figure}

\begin{table}%[H]
  \footnotesize
\caption{Test 4. Values of the significant geometrical, kinematic and rheological variables.}
\centering 
\begin{tabular}{c  c} 
	%	\hline
	\(R\) [m] &  0.004  \\
	\hline
	\(\phi_0\) [rad] & \(\pi/12\) \\
	\hline
	\(L_a\) & \(R \ \pi\)   \\
	\hline
	$L_B$  & $2 \ R \ \tan \phi_0$  \\
	\hline
	$H$ & $\frac{R}{\sin \phi_0}$ \\
	\hline
	$R_i$ & \(\frac{R}{2.5 \ D^2 \ \sin \phi_0}\) \\
	\hline
	\( a\) [-] & 0.7 \\
	\hline
	\(n\) [-] & 11 \\
	\hline
	\(\hat{U}_{max}\) [m/s] & 0.005  \\
	\hline
	\(\nu\) [m\(^2\)/s] & \(6.7\e{-6}\)  \\
\end{tabular}
\label{table_test4_geo}
\end{table}

We simulate three scenarios using the values of \(t_0\) listed in \cref{table_test4_pulse}. In the same table we list the associated Womersley number \(\alpha = R\sqrt{\frac{1}{t_0 \nu}}\) \cite{https://doi.org/10.1113/jphysiol.1955.sp005276}. The computational grid has 59949 triangles and 31831 vertices, and in \cref{fig_test4_zoom_grid} we plot a zoom of the grid discretizing the inner portions of the obstacle. The (dimensionless) grid size ranges between 0.05 and 0.0015, and the (dimensionless) time step size is \(\Delta t = \frac{t_0}{T_0 \cdot 80}\). \par

In \cref{fig_test4_velo_comps_all_womersley} we plot the radial profiles of the (dimensionless) velocity components close to the inflow section. As expected, the horizontal velocity becomes flatter as \(\alpha\) increases \cite{https://doi.org/10.1113/jphysiol.1955.sp005276}. \cref{fig_test4_plot_wom_1} shows the velocity and pressure fields every quarter of period in the case of \(\alpha = 1.09\), and \cref{fig_test4_plot_wom_1-2} shows the zooms of the velocity field close to the inner invested obstacle at the same simulation times. Complex vortical structures, composed of four or more vortices with alternating flow direction, can be observed between the waves of obstacle perimeter. \par

We compute the forces of the fluid on the perimeter of the obstacle. The total force \(\mathbf{F}\) is the sum of the kinematic pressure force, \(\mathbf{F}_{\Psi}\), and the viscous force, \(\mathbf{F}_{\nu}\). We divide the perimeter in \(N_p\) parts (edges) and set
\begin{equation}
\mathbf{F}= \mathbf{F}_{\Psi} +\mathbf{F}_{\nu} = \sum_{i=1}^{N_p} \left(\mathbf{F}_{\Psi,i} + \mathbf{F}_{\nu,i}\right).
\end{equation}
The force \(\mathbf{F}_{\Psi}\) on the \(i\)-th edge of the obstacle perimeter is computed as
\begin{equation}
\mathbf{F}_{\Psi} = |e_i| \frac{\Psi_1 + \Psi_2}{2} \mathbf{n}_i
\end{equation}
where \(|e_i|\) is the length of the \(i\)-th edge, \(\Psi_1\) and \(\Psi_2\) are the kinematic pressure values computed at the two vertices of the edge \(e_i\), and \(\mathbf{n}_i\) is the unit normal vector to \(e_i\), pointing outward from the fluid region. The two values \(\Psi_1\) and \(\Psi_2\) are computed according to the \(P_1\) approximation of \(\Psi\) within the triangle \(E\) sharing edge \(e_i\) with the perimeter of the solid obstacle. The \(x\)- and \(y\)-components of the viscous force acting on \(e_i\) are computed as
\begin{equation}
F_{x,\nu,i} = \nu \nabla u_x \cdot \mathbf{n}_i |e_i|, \qquad F_{y,\nu,i} = \nu \nabla u_y \cdot \mathbf{n}_i |e_i|,
\end{equation}
where \( \nabla u_x\) and \( \nabla u_y\) are the gradient vectors of the velocity components computed within the triangle \(E\) sharing edge \(e_i\) with the obstacle. These gradient vectors are computed according to the linear variation of the velocity components within \(E\). \par

\cref{fig_test4_plot_wom_1-3,fig_test4_plot_wom_1-4} show the vectors \(\mathbf{F}\) and \(\mathbf{F}_{\nu}\), respectively, computed for \(\alpha = 1.09\) at every quarter-period. \(|\mathbf{F}_{\nu}|\) is approximately one magnitude order smaller than \(|\mathbf{F}|\). Observe the opposite direction of the vectors at time \(0\) and \(t_0/2\), as well as at times \(t_0/4\) and \(3 \ t_0/4\), respectively, due to the periodicity of the process. The changes of the direction of \(\mathbf{F}_{\nu}\) on the boundary of the obstacle at any investigated time are due to the vortical structures within the waves of the perimeter, as discussed before. In the cases of \(\alpha = 2.19\) and \(\alpha = 4.89\), the directions of the forces are similar to those in \cref{fig_test4_plot_wom_1-3,fig_test4_plot_wom_1-4} and for brevity are not shown. \(|\mathbf{F}|\) in the cases of \(\alpha = 2.19\) and \(\alpha = 4.89\) is approximately in the range \(\left[1.65,3.375\right] \) and \(\left[4.6,15\right] \), respectively, relative to \(|\mathbf{F}|\) in the case of \(\alpha = 1.09\). \(|\mathbf{F}_{\nu}|\) in the cases of \(\alpha = 2.19\) and \(\alpha = 4.89\) is approximately in the range \(\left[1.2,2.3\right] \) and \(\left[2,5.17\right] \), respectively, relative to \(|\mathbf{F}_{\nu}|\) in the case of \(\alpha = 1.09\). 

Overall the presented results illustrate the ability of our method to simulate flows in highlly complex geometries in a numerically stable and accurate manner.

\begin{table}%[H]
  \footnotesize
\caption{Test 4. Period of the pulsation \(t_0\) of the assigned inflow velocity Poiseuille profiles and associated Womersley number \(\alpha\).}
\centering 
\begin{tabular}{c|c}
	%	\hline
	\(t_0\) [s] & \(\alpha\)  \\
	\hline
	2 & 1.09 \\
	\hline
	0.5 & 2.19 \\
	\hline
	0.1 & 4.89 \\
	%	\hline
\end{tabular}
\label{table_test4_pulse}
\end{table}

\begin{figure}%[H]
\centering
\includegraphics[width=0.7\textwidth]{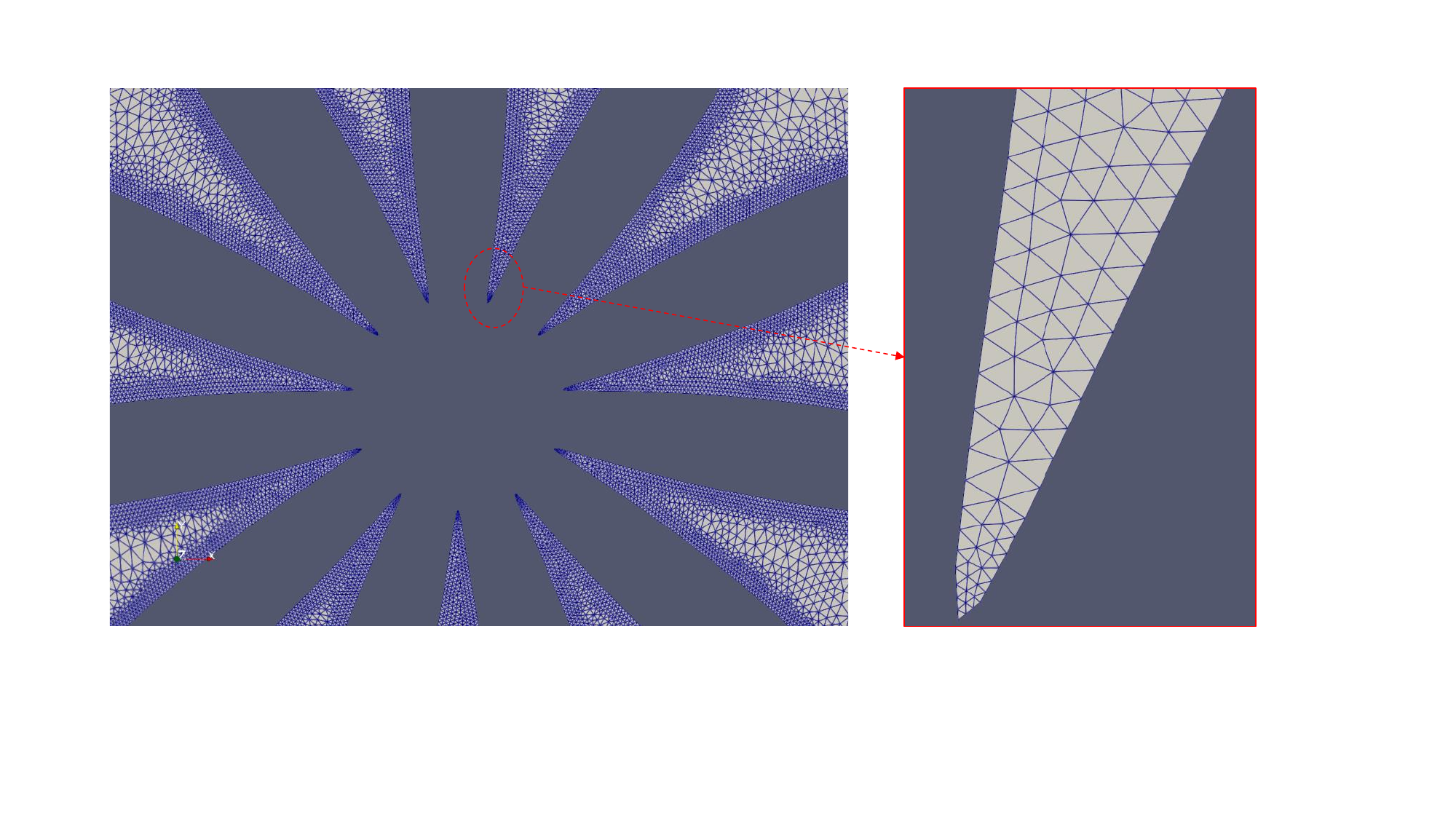}
\caption{Test 4. Zoom of the computational grid.}
\label{fig_test4_zoom_grid}
\end{figure}

\begin{figure}%[H]
\centering
\includegraphics[width=1\textwidth]{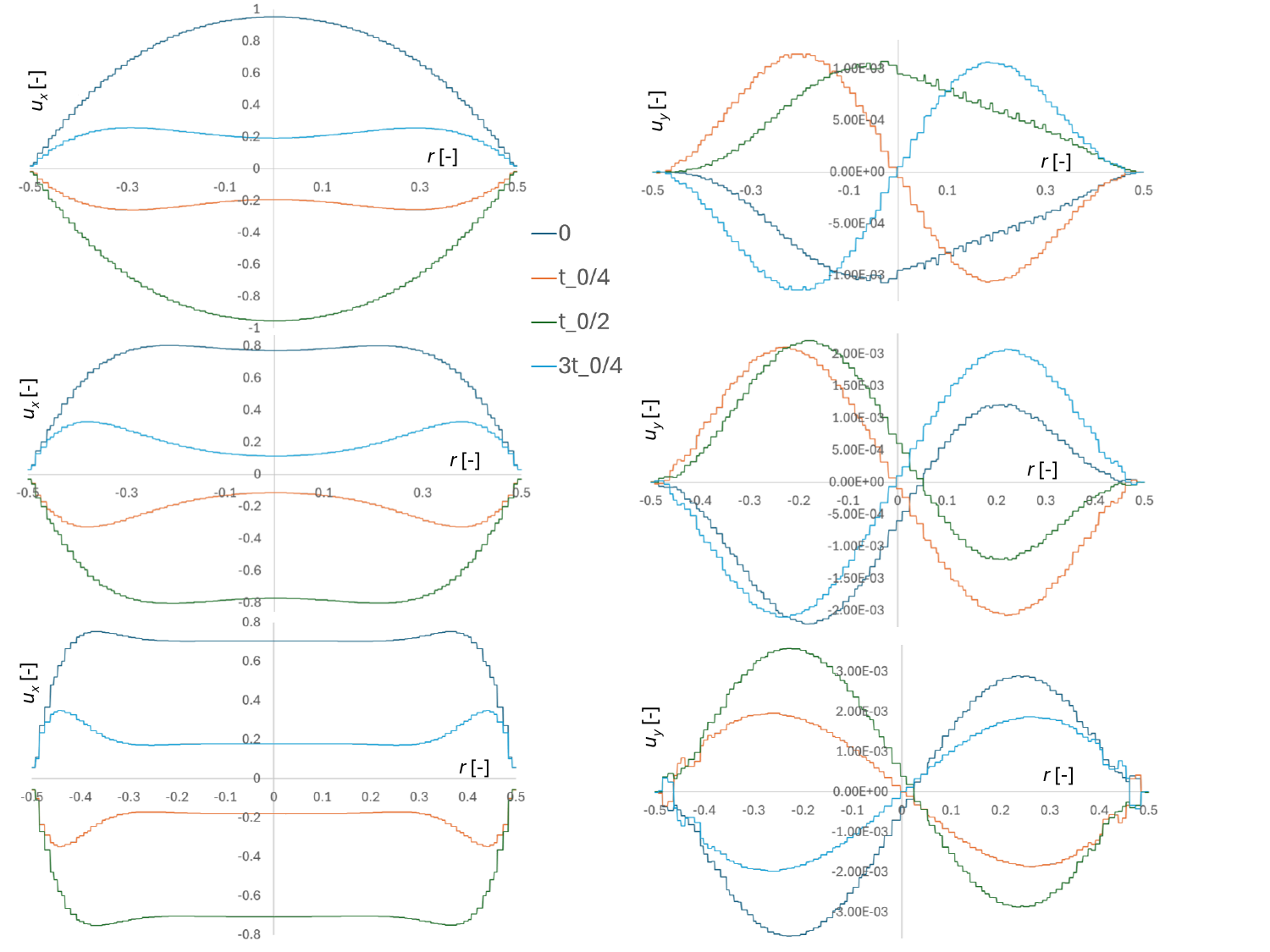}
\caption{Test 4. Radial profiles of the (dimensionless) \(u_x\) and \(u_y\) close to the inflow section. Top row: \(\alpha = 1.09\). Middle row: \(\alpha = 2.19 \). Bottom row: \(\alpha = 4.89\). Nomenclature: \(r\) is the radial distance from the mid-line of the duct section.}
\label{fig_test4_velo_comps_all_womersley}
\end{figure}

\begin{figure}%[H]
\centering
\includegraphics[width=1\textwidth]{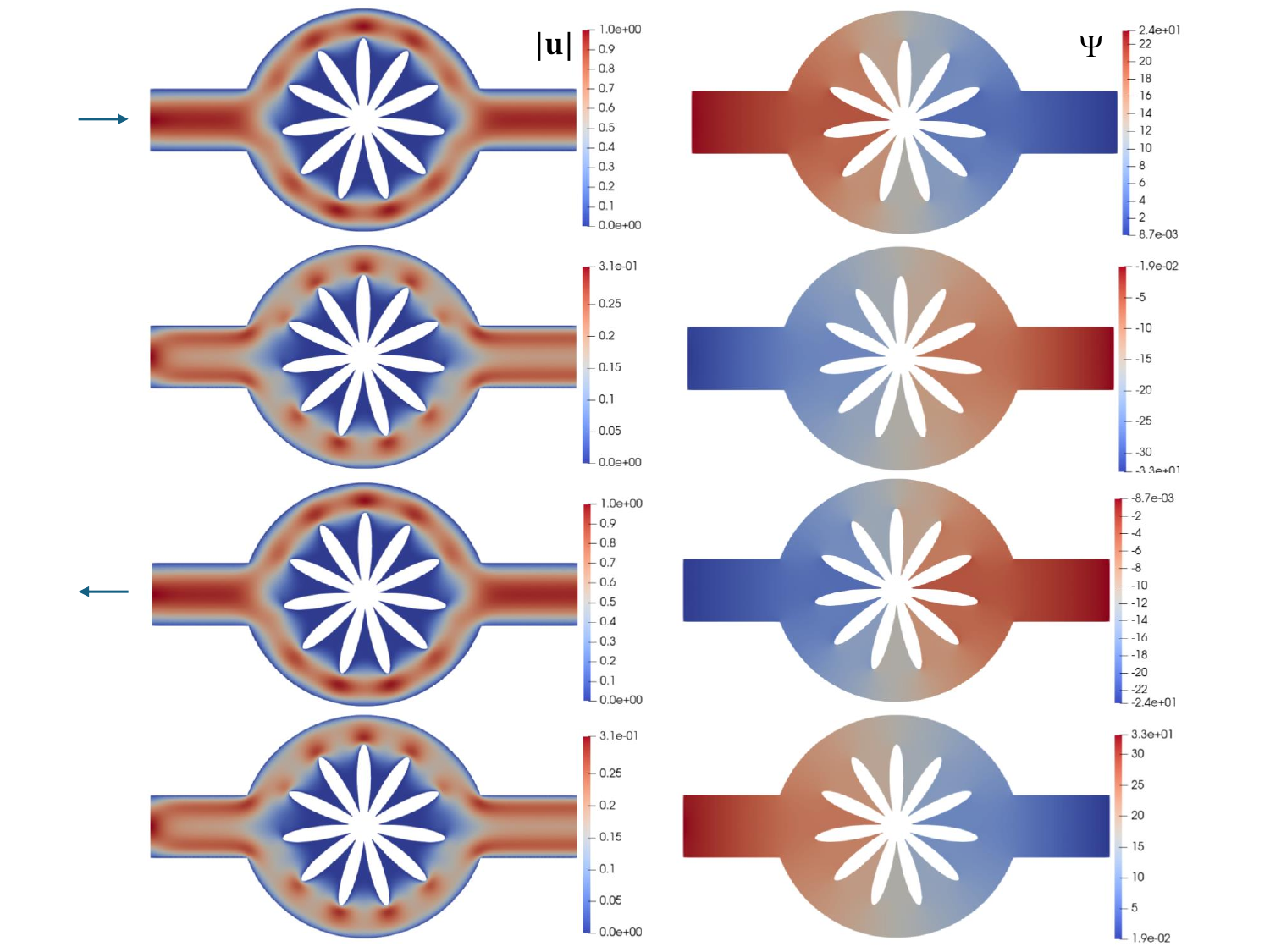}
\caption{Test 4. \(\alpha = 1.09\), dimensionless velocity and pressure fields every quarter-period. The arrow shows the mean flow direction.}
\label{fig_test4_plot_wom_1}
\end{figure}

\begin{figure}%[H]
\centering
\includegraphics[width=.9\textwidth]{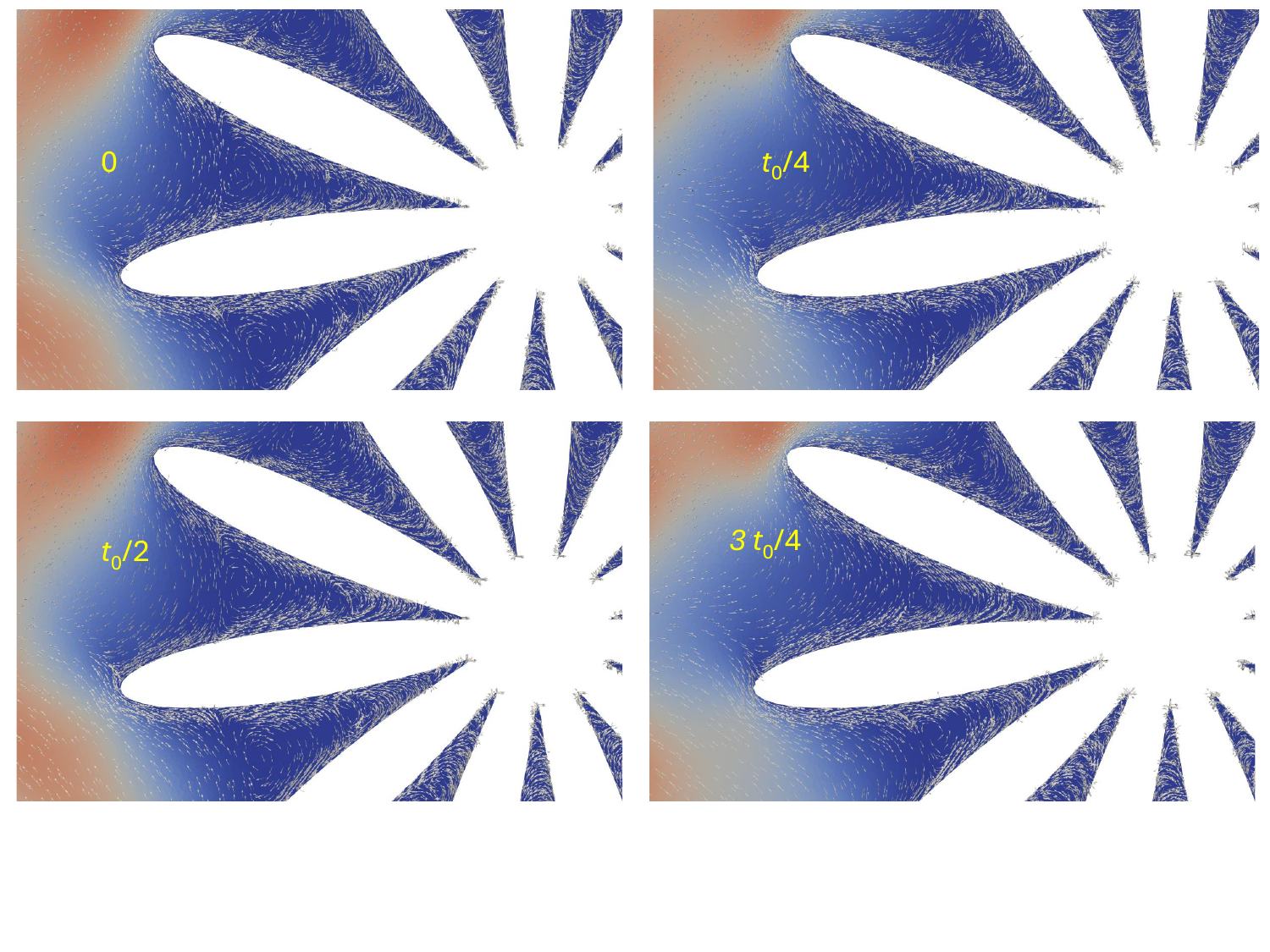}
\caption{Test 4. \(\alpha = 1.09 \), zoom of the velocity vectors close to the invested obstacle at every quarter-period.}
\label{fig_test4_plot_wom_1-2}
\end{figure}

\begin{figure}%[H]
\centering
\includegraphics[width=0.85\textwidth]{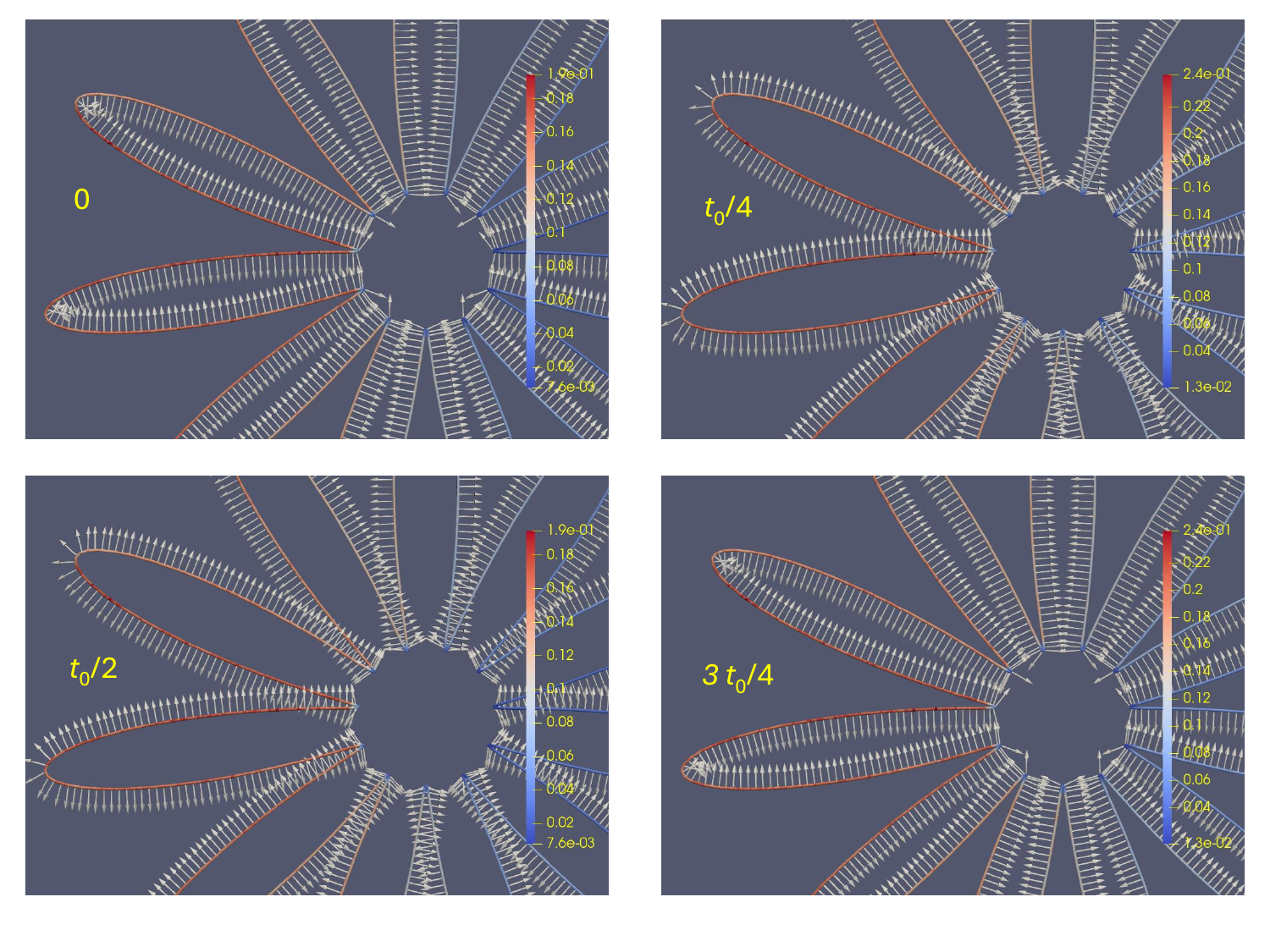}
\caption{Test 4. \(\alpha = 1.09 \), total forces acting on the perimeter of the obstacle at every quarter-period.}
\label{fig_test4_plot_wom_1-3}
\end{figure}

\begin{figure}%[H]
\centering
\includegraphics[width=0.85\textwidth]{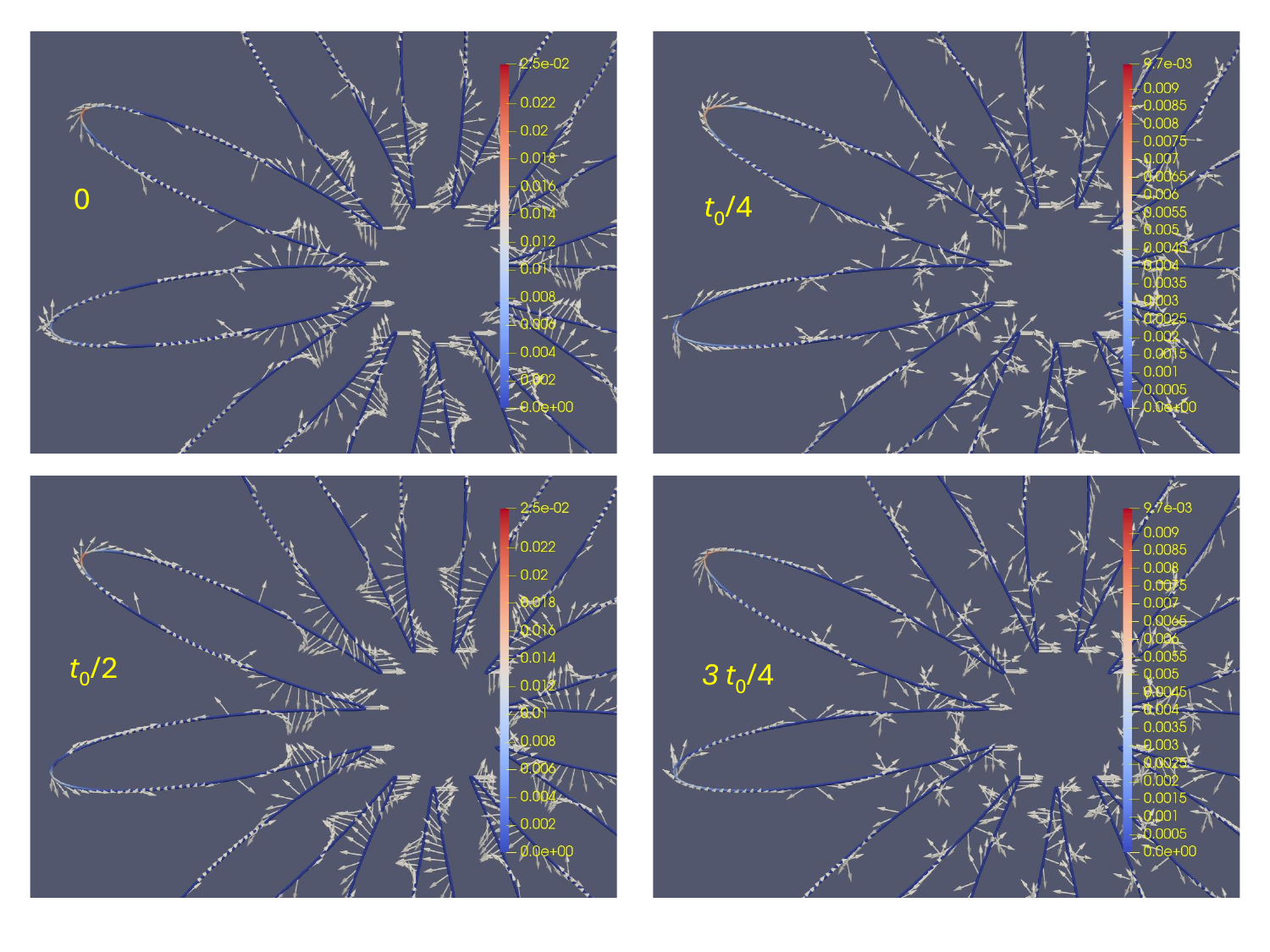}
\caption{Test 4. \(\alpha = 1.09 \), viscous forces acting on the perimeter of the obstacle at every quarter of period.}
\label{fig_test4_plot_wom_1-4}
\end{figure}

\section{Conclusions}\label{sec:concl}

We have developed $H$(div)-conforming mixed finite element methods for the unsteady incompressible Stokes equations. We applied a projection method in the framework of the incremental pressure correction methodology, where a predictor and a projection problems are sequentially solved, accounting for the viscous effects and incompressibility, respectively. The predictor problem is based on a stress-velocity mixed formulation, while the projection problem uses a velocity-pressure mixed formulation. We established unconditional stability and first order in time accuracy. We then developed a specific method of the family on generally unstructured triangular grids, using the multipoint flux mixed finite element methodology.
We used the \(RT_1\) mixed finite element pair and applied an inexact numerical integration to obtain mass lumping and local stress or flux elimination in both the prediction and projection problems. The resulting algebraic systems, which are sparse symmetric and positive definite with only three unknowns per element, are solved efficiently by the preconditioned conjugate gradient method. The scheme results in pointwise divergence-free velocity computed at the end of each time step. The computed velocity and pressure are second order accurate in space.
The presented numerical experiments illustrate the accuracy and efficiency of the method for several benchmark problems and a challenging problem with highly complex geometry. 

The proposed methodology could be straightforwardly extended to 3D problems. In this sense, the good performance obtained over the badly distorted grids, see \cref{test1}, is encouraging for further applications to 3D tetrahedral grids, where an optimal aspect ratio is difficult to achieve. Furthermore, as mentioned in the introduction, the proposed methodology could be regarded as the seed of further extensions, e.g., coupled flow and transport problems and fluid-structure interaction. In particular, due to the good performance and robustness shown by the multipoint flux mixed finite element method proposed in \cite{W-Y} for Darcy flow with heterogeneous and discontinuous full tensor permeability, a natural extension of the presented algorithm would be to include the Brinkman term in the momentum equations to simulate the interaction between a free fluid and a porous medium, see \cite{Arico-ODA}.    

\section*{Acknowledgments}
This work was partially supported by the German Research Foundation (DFG), by funding Sonderforschungsbereich (SFB) 1313 (Project Number 327154368, Research
Project A02), University of Stuttgart, and by funding SimTech via Germany’s Excellence Strategy (EXC
2075–390740016), University of Stuttgart. The third author was partially supported by SimTech Visiting Professorship, University of Stuttgart, and NSF grants DMS-2111129 and DMS-2410686.

\bibliography{stokes-mfe-proj}

\end{document}